\newtheorem{thm}{Theorem}[section]
\newtheorem{lem}[thm]{Lemma}
\newtheorem{false statement}{False statement}
\theoremstyle{definition}
\newtheorem{claim}{Claim}
\makeatletter \@addtoreset{equation}{section}
\def\hn{\mathcal{N}}
\def\hf{\mathcal{F}}
\def\hk{\mathcal{K}}
\begin{document}
\title{\bf\Large The Generalized Tur\'{a}n Problem of Two Intersecting Cliques}
\date{}
\author{ Erica L.L. Liu$^1$, Jian Wang$^2$\\[10pt]
$^{1}$Center for Applied Mathematics\\
Tianjin University\\
Tianjin 300072, P. R. China\\[6pt]
$^{2}$Department of Mathematics\\
Taiyuan University of Technology\\
Taiyuan 030024, P. R. China\\[6pt]
E-mail:  $^1$liulingling@tju.edu.cn, $^2$wangjian01@tyut.edu.cn
}

\maketitle

\begin{abstract}
 For $s<r$, let $B_{r,s}$ be the graph consisting of two copies of $K_r$, which share exactly $s$ vertices. Denote by $ex(n, K_r, B_{r,s})$ the maximum number of copies of $K_r$ in a $B_{r,s}$-free graph on $n$ vertices.  In 1976, Erd\H{o}s and S\'{o}s determined $ex(n,K_3,B_{3,1})$. Recently, Gowers and Janzer showed that
$ex(n,K_r,B_{r,r-1})=n^{r-1-o(1)}$. It is a natural question to ask for $ex(n,K_r,B_{r,s})$ for general $r$ and $s$. In this paper, we mainly consider the problem for $s=1$. Utilizing the Zykov's symmetrization, we show that $ex(n,K_4, B_{4,1})=\lfloor (n-2)^2/4\rfloor$ for $n\geq 45$. For $r\geq 5$ and  $n$ sufficiently large, by the F\"{u}redi's structure theorem we show that $ex(n,K_r,B_{r,1}) =\mathcal{N}(K_{r-2},T_{r-2}(n-2))$, where $\mathcal{N}(K_{r-2},T_{r-2}(n-2))$ represents the number of copies of $K_{r-2}$ in the $(r-2)$-partite Tur\'{a}n graph on $n-2$ vertices.
\end{abstract}

\noindent{\bf Keywords:} Generalized Tur\'{a}n number; Zykov's symmetrization; F\"{u}redi's structure theorem.

\medskip

\section{Introduction}
 Let $T$ be a graph and $\hf$ be a family of graphs. We say that a graph $G$ is $\hf$-free if it does not contain any graph from $\hf$ as a subgraph.
Let $ex(n, T, \hf)$ denote the maximum possible number of copies of $T$ in an $\hf$-free graph on $n$ vertices. The problem of determining $ex(n,T,\mathcal{F})$ is often called the generalized Tur\'{a}n problem.  When $T=K_2$, it reduces to the classical Tur\'{a}n number $ex(n,\mathcal{F})$. For simplicity, we often write $ex(n,T, F)$ for $ex(n,T,\{F\})$.

Let $T$ be a graph on $t$ vertices. The $s$-blow-up of $T$ is the graph obtained by replacing each vertex $v$ of $T$ by
an independent set $W_v$ of size $s$, and each edge $uv$ of $T$ by a complete bipartite graph
between the corresponding two independent sets $W_u$ and $W_v$. Alon and Shikhelman \cite{alon} showed that $ex(n,T,F) =\Theta(n^t)$ if and only if for any positive integer $s$, $F$ is not a subgraph of the $s$-blow-up of $T$. Otherwise, there exists some $\epsilon(T,F)>0$ such that $ex(n,T,F) \leq n^{t-\epsilon(T,F)}$.

For integers $s<r$, let $B_{r,s}$ be the graph consisting of two copies of $K_r$, which share exactly $s$ vertices. In 1976, Erd\H{o}s and S\'{o}s \cite{sos} determined the maximum number of hyperedges in a 3-uniform hypergraph without two hyperedges intersecting in exactly one vertex.  From their result, it is easy to deduce the following theorem.
\begin{thm}[Erd\H{o}s and S\'{o}s \cite{sos}]
	For all $n$,
	\begin{align*}
	ex(n,K_3,B_{3,1})=\left\{
	\begin{array}{ll}
	n, & \hbox{$n\equiv0\pmod 4$;} \\
	n-1, & \hbox{$n\equiv1\pmod 4$;} \\
	n-2, & \hbox{$n\equiv2$ or $3\pmod 4$.}
	\end{array}
	\right.
	\end{align*}
\end{thm}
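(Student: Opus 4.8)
The graph $B_{3,1}$ is the bowtie: two triangles sharing a single vertex. Hence a graph $G$ is $B_{3,1}$-free precisely when no two triangles of $G$ meet in exactly one vertex. The plan is to transfer the problem to the $3$-uniform hypergraph $\mathcal{H}=\mathcal{H}(G)$ whose vertex set is $V(G)$ and whose edges are the vertex sets of the triangles of $G$. By $B_{3,1}$-freeness any two edges of $\mathcal{H}$ meet in $0$ or $2$ vertices, and the number of triangles $\mathcal{N}(K_3,G)$ equals $|E(\mathcal{H})|$. Thus $ex(n,K_3,B_{3,1})$ is at most the Erd\H{o}s--S\'{o}s maximum for the number of triples on $n$ points in which no two triples intersect in exactly one point, and the first task is to exhibit a graph attaining it.

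For the lower bound I would write $n=4k+t$ with $t\in\{0,1,2,3\}$ and take the disjoint union of $k$ copies of $K_4$, together with one extra $K_3$ when $t=3$. Each $K_4$ contributes $4$ triangles, and any two triangles lying in distinct components, or within a common $K_4$, meet in $0$ or $2$ vertices, so the construction is $B_{3,1}$-free; counting gives $n,\ n-1,\ n-2,\ n-2$ triangles for $t=0,1,2,3$, matching the claimed values. It remains to prove the matching upper bound.

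For the upper bound I would re-derive the Erd\H{o}s--S\'{o}s count in the form needed, keeping the argument self-contained. Call two edges of $\mathcal{H}$ adjacent if they share two vertices, and split $\mathcal{H}$ into the resulting connected components. Since two edges in different components share neither two vertices (else adjacent) nor one (forbidden), distinct components are vertex-disjoint. I would then classify a component $C$ on $v(C)$ vertices with $e(C)$ edges: if all its edges contain a common pair, $C$ is a sunflower with a $2$-element core, so $e(C)=v(C)-2$; otherwise three edges already force a copy of $K_4^{(3)}$ on four vertices, and a short argument (any further triple using a new vertex would meet one of these three in a single vertex) shows $C$ lives on those four vertices, whence $C\subseteq K_4^{(3)}$ with $e(C)\le 4=v(C)$, equality only for the full $K_4^{(3)}$. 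Hence every component satisfies $e(C)\le v(C)$, the deficiency $v(C)-e(C)$ being $0$ only for a $K_4^{(3)}$, at least $1$ for a three-edge sub-$K_4^{(3)}$, and at least $2$ in all remaining cases.

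Summing over the vertex-disjoint components gives $|E(\mathcal{H})|=n-u-\sum_C\bigl(v(C)-e(C)\bigr)$, where $u\ge 0$ counts unused vertices, so maximizing reduces to minimizing $u+\sum_C(v(C)-e(C))$ subject to $\sum_C v(C)\le n$. The main obstacle, and the only genuinely delicate point, is the parity bookkeeping: since every zero-deficiency component has exactly four vertices, the vertices covered by such components always number a multiple of $4$, so one cannot have both $u=0$ and total deficiency $0$ unless $4\mid n$. Working residue class by residue class, I expect to show that the minimum of $u+\sum_C(v(C)-e(C))$ is forced to be $0,1,2,2$ when $n\equiv 0,1,2,3\pmod 4$ respectively, which yields $|E(\mathcal{H})|\le n,\ n-1,\ n-2,\ n-2$ and closes the gap with the constructions. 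Once this elementary accounting is carried out, the theorem follows.
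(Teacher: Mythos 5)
Your proposal is correct, but the comparison is necessarily lopsided: the paper contains no proof of this statement. It is quoted as a theorem of Erd\H{o}s and S\'{o}s, with the remark that it follows from their determination of the maximum number of hyperedges in a $3$-uniform hypergraph in which no two hyperedges meet in exactly one vertex. Your opening move --- encoding triangles as a $3$-uniform hypergraph whose edges pairwise intersect in $0$ or $2$ vertices, together with the disjoint-$K_4$ construction (plus an extra $K_3$ when $n\equiv 3\pmod 4$) for the lower bound --- is exactly the deduction the paper has in mind; everything after that is material the paper outsources to the citation, and you supply it correctly. Your component decomposition under the ``share two vertices'' relation is sound: components are vertex-disjoint because $1$-point intersections are forbidden; a component is either a sunflower with a two-element core, giving $e(C)=v(C)-2$, or, as soon as two edges through a common pair $\{a,b\}$ coexist with an edge omitting that pair, it contains three faces of a tetrahedron on four vertices, after which no edge of the component can use a fifth vertex, so $C\subseteq K_4^{(3)}$ and $v(C)-e(C)\in\{0,1\}$ with $0$ only for the full $K_4^{(3)}$. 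The final accounting, which you state only as an expectation, does go through: writing $\mathcal{N}(K_3,G)=n-u-\sum_C\bigl(v(C)-e(C)\bigr)$, a total loss of $0$ forces every component to be a $K_4^{(3)}$ and $u=0$, hence $4\mid n$; a total loss of $1$ forces $n\equiv 0$ or $1\pmod 4$ (one spare vertex, or one three-face component among $K_4^{(3)}$'s); so the loss is at least $0,1,2,2$ in the four residue classes and your constructions attain these values. In short, your proposal is a correct, self-contained substitute for the paper's citation --- what it buys is independence from the Erd\H{o}s--S\'{o}s paper and a transparent view of where the $n\bmod 4$ defect enters; what it costs is length, and its only incompleteness is that the closing parity count is sketched rather than written out.
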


 The celebrated Ruzsa-Szemer\'{e}di theorem \cite{ruzsa} implies that $ex(n, K_3, B_{3,2})=n^{2-o(1)}$. Recently, Gowers and Janzer \cite{gowera} proposed a natural generalization of Ruzsa-Szemer\'{e}di Theorem, and proved the following result.
\begin{thm}[Gowers and Janzer \cite{gowera}]\label{gowers}
For each $2\leq s< r$,
\[
ex(n,K_r,\{B_{r,s},B_{r,s+1},\ldots,B_{r,r-1}\})=n^{s-o(1)}.
\]
\end{thm}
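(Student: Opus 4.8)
The plan is to prove matching bounds on the exponent. First note that a graph $G$ is $\{B_{r,s},B_{r,s+1},\ldots,B_{r,r-1}\}$-free if and only if every two distinct copies of $K_r$ in $G$ share at most $s-1$ vertices, since a shared set of $t$ vertices with $s\le t\le r-1$ is exactly what produces a copy of $B_{r,t}$. For the upper bound I would use a double counting argument on $s$-subsets. Assign to each copy of $K_r$ the family of its $\binom{r}{s}$ vertex-subsets of size $s$. If two distinct copies of $K_r$ contained a common $s$-set $S$, then $S$ would lie in their intersection, forcing them to share at least $s$ vertices, a contradiction. Hence distinct copies of $K_r$ give disjoint families of $s$-sets, and therefore
\[
\binom{r}{s}\,\mathcal{N}(K_r,G)\le \binom{n}{s},
\qquad\text{so}\qquad
\mathcal{N}(K_r,G)\le \binom{n}{s}\Big/\binom{r}{s}=O(n^{s}).
\]
This already shows $ex(n,K_r,\{B_{r,s},\ldots,B_{r,r-1}\})\le n^{s+o(1)}$; an application of a hypergraph removal lemma to the $r$-uniform hypergraph of cliques can be used to sharpen $O(n^{s})$ to $o(n^{s})$, but the crude bound already pins the exponent from above.

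The hard direction is the lower bound $n^{s-o(1)}$, which I would obtain by generalizing the Ruzsa--Szemer\'edi construction. The prototype is the case $r=3$, $s=2$: take three parts, each a copy of $\mathbb{Z}_N$, fix a Behrend set $A\subseteq\mathbb{Z}_N$ with $|A|=N^{1-o(1)}$ and no nontrivial three-term arithmetic progression, and for each $x\in\mathbb{Z}_N$ and $a\in A$ place a triangle on $(x,x+a,x+2a)$. A direct check shows that any two of these $N|A|=N^{2-o(1)}$ triangles meet in at most one vertex, while the no-3-AP property guarantees that every triangle of the resulting graph is one of the prescribed ones, since a spurious triangle would yield a relation $a_1+a_2=2a_3$ with $a_i\in A$, i.e. a three-term progression in $A$. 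For general $r$ and $s$, I would set up an $r$-partite graph over a group, prescribe the copies of $K_r$ as the solution set of a linear system carrying $s$ degrees of freedom, chosen so that the $\approx N^{s}$ prescribed cliques pairwise agree in at most $s-1$ coordinates, and then restrict the free parameters to a suitable higher-dimensional Behrend-type set so that no nontrivial instance of the defining relations can occur. Taking $n=\Theta(N)$ would then yield $n^{s-o(1)}$ copies of $K_r$, pairwise meeting in at most $s-1$ vertices.

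The main obstacle is entirely in this construction, and it is twofold. One must design the linear system so that the prescribed cliques automatically enjoy the pairwise intersection bound (a general-position, MDS-type condition on the defining coordinates), and, more delicately, one must ensure that there are \emph{no} spurious copies of $K_r$ --- copies whose $\binom{r}{2}$ edges are contributed by several different prescribed cliques. Since the naive dense realization already fails because its edge set is too rich (for instance a Reed--Solomon template makes the $r$-partite graph essentially complete, flooding it with spurious cliques), the role of the Behrend-type set is precisely to force local edge-consistency to propagate to global clique-consistency. Verifying this propagation, and that it costs only a factor $N^{-o(1)}$ in the count, is the technical heart of the argument.
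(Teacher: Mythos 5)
First, a point of comparison: the paper does not prove this statement at all --- it is quoted verbatim from Gowers and Janzer \cite{gowera} --- so there is no in-paper proof to measure your proposal against; it must stand on its own. Its first half does stand: the observation that $\{B_{r,s},\ldots,B_{r,r-1}\}$-freeness is equivalent to every two distinct copies of $K_r$ sharing at most $s-1$ vertices is correct, and your double count (each $s$-set of vertices lies in at most one copy of $K_r$, while each copy contains $\binom{r}{s}$ such sets) correctly yields $\mathcal{N}(K_r,G)\le\binom{n}{s}\big/\binom{r}{s}=O(n^{s})$. Since $n^{s-o(1)}$ only constrains the exponent, this crude bound is indeed all that is needed on the upper side; the removal-lemma sharpening to $o(n^s)$ you allude to is not required for the statement as written.

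The genuine gap is the lower bound, which is the entire substance of the theorem. What you give is a correct account of the Ruzsa--Szemer\'edi/Behrend prototype for $(r,s)=(3,2)$, followed by a plan for general $r$ and $s$: an $r$-partite graph over a group, cliques cut out by a linear system with $s$ degrees of freedom, parameters restricted to a Behrend-type set so that edge-consistency propagates to clique-consistency. But you never exhibit the linear system, never state which family of linear equations the Behrend-type set must avoid nontrivial solutions of (for $s\ge 3$ one needs sets avoiding nontrivial solutions to a whole system of equations, not just $3$-term progressions, and one must also prove that such sets of size $N^{1-o(1)}$ exist), never verify the pairwise-intersection bound of $s-1$ for the prescribed cliques, and never carry out the spurious-clique analysis --- which you yourself flag as ``the technical heart.'' Designing that system and proving that \emph{every} copy of $K_r$ in the resulting graph is a prescribed one is precisely the difficult content of the Gowers--Janzer paper; asserting that it can be done, and naming the obstacles, is not the same as doing it. So your proposal establishes the $O(n^{s})$ upper bound but not the $n^{s-o(1)}$ lower bound, and hence does not prove the theorem.
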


For a graph $G$, let $V(G)$ and $E(G)$ be the vertex set and edge set of $G$, respectively.  The {\it join} of two graphs $G_1$ and $G_2$, denoted by $G_1\vee G_2$, is defined as $V(G_1\vee G_2)=V(G_1)\cup V(G_2)$ and $E(G_1\vee G_2)=E(G_1)\cup E(G_2)\cup \{xy\colon x\in V(G_1), y\in V(G_2)\}$. The $r$-partite Tur\'{a}n graph on $n$ vertices, denoted by $T_r(n)$, is a complete $r$-partite graph with each part of size differ by at most one. Denote by $\mathcal{N}(T,G)$ the number of copies of $T$ in $G$.

In this paper,  by Zykov's symmetrization \cite{zykov} we determine $ex(n, K_4, B_{4,1})$ for $n\geq 45$.

\begin{thm}\label{thm1}
For $n\geq 45$,
\begin{align*}
ex(n,K_4, B_{4,1})=\left\lfloor\frac{(n-2)^2}{4}\right\rfloor,
\end{align*}
and $K_2\vee T_2(n-2)$ is the unique graph attaining the maximum number of copies of $K_4$.
\end{thm}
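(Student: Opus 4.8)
The plan is to prove the matching upper bound together with uniqueness, the lower bound being immediate from the construction. Observe first that $K_2\vee T_2(n-2)$ is the complete $4$-partite graph $K_{1,1,\lfloor (n-2)/2\rfloor,\lceil (n-2)/2\rceil}$, whose two singleton parts form an edge $\{a,b\}$ joined completely to a balanced complete bipartite graph on parts $X,Y$. Since $X$ and $Y$ are independent and there are exactly four parts, any $K_4$ must use both $a$ and $b$ together with one vertex from each of $X$ and $Y$; hence the number of copies of $K_4$ equals $|X|\,|Y|=\lfloor (n-2)^2/4\rfloor$, and any two of them meet in at least $\{a,b\}$, so the graph is $B_{4,1}$-free. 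It remains to show that every $B_{4,1}$-free graph $G$ on $n\ge 45$ vertices has at most $\lfloor (n-2)^2/4\rfloor$ copies of $K_4$, with equality only for this graph.

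The heart of the argument is the structural claim that in an extremal $G$ all copies of $K_4$ pass through one common edge $\{a,b\}$. Granting this, put $H=G[N(a)\cap N(b)]$. Every $K_4$ then consists of $\{a,b\}$ together with an edge of $H$, so $\mathcal{N}(K_4,G)=e(H)$; moreover $H$ is triangle-free, since a triangle $\{x,y,z\}$ in $H$ would yield the $K_4$ $\{a,x,y,z\}$ avoiding $b$. By Mantel's theorem $e(H)\le \lfloor |N(a)\cap N(b)|^2/4\rfloor\le \lfloor (n-2)^2/4\rfloor$, and equality forces $N(a)\cap N(b)=V(G)\setminus\{a,b\}$ together with $H=T_2(n-2)$; since $a\sim b$, this pins down $G=K_2\vee T_2(n-2)$ exactly. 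Thus both the bound and the uniqueness follow once the common edge is established.

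To establish the common edge I would run Zykov's symmetrization to drive $G$ toward a complete multipartite graph. The local input is a Helly-type fact: if $v$ lies in a copy of $K_4$, then the triangles $Q\setminus\{v\}$ over all $K_4$'s $Q\ni v$ pairwise intersect inside $N(v)$, because two such copies share $v$ and hence, by $B_{4,1}$-freeness, a second vertex. Analyzing these intersecting triangle families together with the global condition sorts each vertex into a few types: those whose incident $K_4$'s share a common edge at $v$, those whose incident $K_4$'s share a common triangle, and those confined to a bounded configuration. One then symmetrizes non-adjacent pairs so as never to decrease $\mathcal{N}(K_4,\cdot)$, aiming for a graph in which every pair of non-adjacent vertices are twins, i.e.\ a complete multipartite graph. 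Finally, a $B_{4,1}$-free complete multipartite graph with at least one $K_4$ has at most two parts of size $\ge 2$ (three such parts already produce two $K_4$'s meeting in a single vertex), so optimizing the part sizes under this constraint singles out $K_{1,1,\lfloor (n-2)/2\rfloor,\lceil (n-2)/2\rceil}$, which indeed carries the common edge.

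The main obstacle is that Zykov symmetrization does \emph{not} preserve $B_{4,1}$-freeness unconditionally. If $u$ sits in two copies of $K_4$ meeting in exactly an edge $\{u,w\}$ through $u$, then cloning $u$ into a non-adjacent twin $v$ creates a copy of $K_4$ through $v$ meeting the second $K_4$ only in $w$, a forbidden $B_{4,1}$; precisely the ``apex'' vertices of a book are unsafe to clone, whereas vertices whose incident $K_4$'s share a common triangle are safe. The delicate part of the proof is therefore to perform only admissible symmetrizations, guided by the vertex types above, to verify that these keep $G$ simultaneously $B_{4,1}$-free and $K_4$-maximal, and to argue that the restricted process still terminates at a complete multipartite graph. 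Controlling the few vertices that are never cloned, and bounding the error terms in the accompanying counting and stability estimates, is what forces the explicit threshold $n\ge 45$.
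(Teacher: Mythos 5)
You have the right high-level strategy---it is in fact the same as the paper's (Zykov symmetrization, with the book-apex vertices correctly identified as the obstruction), and your Mantel finish via a common edge would be a clean way to conclude---but the proposal stops exactly where the real work starts, and the sketch you give for that part rests on a false premise. You describe the uncloneable vertices as ``confined to a bounded configuration'' and speak of ``controlling the few vertices that are never cloned.'' That is not the situation: $B_{4,1}$ forbids two copies of $K_4$ sharing \emph{exactly one} vertex, so vertex-disjoint books are perfectly legal, and a $B_{4,1}$-free graph can contain $\Theta(n)$ pairwise disjoint books, hence $\Theta(n)$ apex vertices carrying $\Theta(n)$ copies of $K_4$. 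These cannot be dismissed as a small exceptional set; their contribution to $\mathcal{N}(K_4,G)$ has to be beaten by a counting argument. The paper does this by proving that the set $E_1$ of apex edges is a matching (Claim 1), that no copy of $K_4$ meets a matching edge in exactly one vertex (Claim 4), bounding the number of $K_4$'s inside $V(G_1)$ by Mantel's theorem applied to an auxiliary graph whose vertex set is $E_1$, and then optimizing $\lfloor x^2/16\rfloor+\lfloor (n-x)^2/4\rfloor$ over the split of the vertex set. Nothing in your plan plays this role.

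Moreover, the matching property of apex edges---which any version of your ``restricted symmetrization'' would also need---is not a consequence of $B_{4,1}$-freeness alone: two adjacent apex edges only force a copy of the seven-vertex graph $H_1$ or of $K_5$, and both of these graphs are themselves $B_{4,1}$-free. Likewise, bounding the number of parts of the complete multipartite graph that symmetrization produces requires $K_5$-freeness, which $B_{4,1}$-freeness does not give you. This is exactly why the paper argues in two stages: Lemma 2.1 proves the exact bound for $\{B_{4,1},H_1,K_5\}$-free graphs, a class that \emph{is} closed under symmetrization of non-apex vertices (Claim 2); then Claims 5 and 6 show that an arbitrary $B_{4,1}$-free graph can be made $\{B_{4,1},H_1,K_5\}$-free by deleting vertices, each deletion destroying at most $10$ copies of $K_4$, and comparing $\lfloor(n-2)^2/4\rfloor$ with $10n+1$ is precisely what produces the threshold $n\geq 45$. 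Your single-class plan has no analogue of this enlargement of the forbidden family or of the deletion step, so neither the claim that your restricted process terminates in a complete multipartite graph nor the common-edge claim can be established from what you have written, and the threshold $n\geq 45$ is invoked without any mechanism in your argument that would generate it.
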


Then, by F\"{u}redi's structure theorem \cite{furedi}, we determine $ex(n, K_r, B_{r,1})$ for $r\geq 5$ and $n$ sufficiently large.
\begin{thm}\label{thm3}
For $r\geq 5$ and sufficiently large $n$,
\begin{align*}
ex(n,K_r, B_{r,1})=\mathcal{N}(K_{r-2}, T_{r-2}(n-2)),
\end{align*}
and $K_2\vee T_{r-2}(n-2)$ is the unique graph attaining the maximum number of copies of $K_r$.
\end{thm}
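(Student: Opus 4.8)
First I would verify the construction: in $K_2\vee T_{r-2}(n-2)$ every copy of $K_r$ consists of the two apex vertices $u,v$ together with a copy of $K_{r-2}$ in $T_{r-2}(n-2)$, so the number of $K_r$'s equals $\mathcal N(K_{r-2},T_{r-2}(n-2))$; since any two of these $K_r$'s contain $\{u,v\}$ they meet in at least two vertices, and the graph is $B_{r,1}$-free. This gives the lower bound, so it remains to prove the matching upper bound with uniqueness. Fix a $B_{r,1}$-free graph $G$ on $n$ vertices maximizing $\mathcal N(K_r,G)$; the goal is $\mathcal N(K_r,G)\le \mathcal N(K_{r-2},T_{r-2}(n-2))$, with equality only for $K_2\vee T_{r-2}(n-2)$.

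\textbf{Local structure via links.} The key local observation is that for every vertex $w$ the link $G[N(w)]$ has the property that any two of its copies of $K_{r-1}$ share a vertex: two disjoint $(r-1)$-cliques in $N(w)$ would, together with $w$, form two $K_r$'s meeting in exactly the vertex $w$, that is, a copy of $B_{r,1}$. Hence the $(r-1)$-cliques through any vertex form an intersecting family of cliques, and I would use a Hilton--Milner-type dichotomy: such a family either has a vertex common to all its members or is far smaller than a ``star''. Choosing $u$ to lie in the largest number of $K_r$'s, so that $G[N(u)]$ carries $\Theta(n^{r-2})$ copies of $K_{r-1}$ (star order), the dichotomy forces a vertex $v\in N(u)$ lying in almost all of these $K_{r-1}$'s. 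The pair $\{u,v\}$ is the candidate apex edge, and the $K_r$'s through both $u$ and $v$ are in bijection with the $K_{r-2}$'s of $H:=G[N(u)\cap N(v)]$.

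\textbf{Global structure and the exact count.} To upgrade ``almost all'' to ``all'' and to identify $H$ I would use two further facts. First, closing the relation ``two $K_r$'s meet in at least two vertices'' under transitivity partitions the $K_r$'s into components whose vertex sets are pairwise disjoint, because two $K_r$'s in distinct components can share neither $\ge 2$ vertices (which would merge them) nor exactly one (forbidden), and so are disjoint; since the per-component maximum grows like $m^{r-2}$ with $r-2\ge 2$, strict superadditivity shows the extremal $G$ has a single such component, and extremality then forces every vertex into $N(u)\cap N(v)$, giving $|V(H)|=n-2$. Second, if $H$ contained a copy of $K_{r-1}$ then $H$ together with $\{u,v\}$ would contain $K_{r+1}$, and a local-exchange argument shows this can only decrease the $K_r$-count of an extremal graph; hence $H$ is $K_{r-1}$-free. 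Applying the classical theorem of Zykov \cite{zykov}, the number of $K_{r-2}$'s in a $K_{r-1}$-free graph on $n-2$ vertices is at most $\mathcal N(K_{r-2},T_{r-2}(n-2))$, with equality only for $T_{r-2}(n-2)$; this yields the bound and, tracing equality, $G=K_2\vee T_{r-2}(n-2)$. F\"uredi's structure theorem \cite{furedi} enters precisely here, supplying the stability input that converts the approximate ``almost all $K_r$'s through $\{u,v\}$'' picture into the exact statement and absorbs the lower-order error terms when $n$ is large.

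\textbf{Main obstacle.} The hardest part is the rigorous passage from approximate to exact structure: proving that \emph{every} $K_r$ (not merely a $1-o(1)$ fraction) passes through the single pair $\{u,v\}$, that $H$ is genuinely $K_{r-1}$-free, and that no competing configuration---a few vertex-disjoint cliques of order up to $2r-2$, or a second apex pair carrying a positive fraction of the cliques---can match the count. Quantifying the Hilton--Milner-type deficiency for intersecting clique families and feeding it into F\"uredi's structure theorem to control all error terms is exactly where the hypothesis that $n$ is sufficiently large is consumed.
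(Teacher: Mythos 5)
Your lower-bound verification and the observation that the $(r-1)$-cliques inside any link $G[N(w)]$ form an intersecting family are both correct, and your final steps (all $K_r$'s through one pair $\{u,v\}$, then $K_{r-1}$-freeness of $G[N(u)\cap N(v)]$, then Zykov) match the endgame of the paper's proof. But there are two genuine gaps in the middle, and they are exactly where the real work lies. First, your claim that the vertex $u$ of maximum $K_r$-degree lies in $\Theta(n^{r-2})$ copies of $K_r$ (``star order'') is unjustified: averaging over the $n$ vertices of a graph with $\mathcal{N}(K_r,G)\geq \mathcal{N}(K_{r-2},T_{r-2}(n-2))\approx \left(\frac{n}{r-2}\right)^{r-2}$ only guarantees a vertex of $K_r$-degree about $\frac{r}{(r-2)^{r-2}}\,n^{r-3}$, whereas the Hilton--Milner threshold for intersecting families of $(r-1)$-sets is about $\frac{r-1}{(r-3)!}\,n^{r-3}$; for $r=5$ these constants are $\frac{5}{27}\approx 0.19$ versus $2$. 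So a priori \emph{every} vertex of $G$ could have $K_r$-degree below the Hilton--Milner threshold while the total count still meets the target, and your dichotomy cannot be invoked at any vertex. The argument never gets off the ground. Second, your single-component claim is circular: the ``per-component maximum grows like $m^{r-2}$'' with the sharp constant is precisely the statement being proven (and induction on $n$ is unavailable because the theorem only holds for $n$ sufficiently large). The crude bound that is actually available without further structure, namely $ex(m,K_r,B_{r,1})\leq \binom{m-2}{r-2}$ from Frankl--F\"{u}redi, does not yield superadditivity: two components of size $n/2$ could in principle hold up to $2\binom{n/2-2}{r-2}\approx \frac{n^{r-2}}{2^{r-3}(r-2)!}$ copies, which \emph{exceeds} the target $\approx \frac{n^{r-2}}{(r-2)^{r-2}}$ (for $r=5$: $\frac{1}{24}>\frac{1}{27}$). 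So disjointness plus superadditivity does not rule out a split extremal configuration.

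The paper's proof is designed to close exactly these two gaps, and it uses F\"{u}redi's structure theorem in a way quite different from the ``stability input'' role you assign it. It passes to the hypergraph $\hf$ of $r$-clique vertex sets (in which no two edges meet in exactly one vertex), applies F\"{u}redi's theorem iteratively to decompose almost all of $\hf$ into homogeneous pieces, and then uses the Frankl--F\"{u}redi kernel lemma (Theorem 3.1, with $s=1$) to extract, for each clique $F$, a $2$-element kernel $A(F)$; the sunflower property (iv) together with the Deza--Erd\H{o}s--Frankl lemma shows the resulting groups $\mathcal{H}_j$ have pairwise disjoint vertex sets, and---crucially---each group comes equipped with an apex pair $A_j$ whose removal leaves a $K_{r-1}$-free graph (the paper's Claim 3.4). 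That last fact is what makes the sharp per-group bound $\mathcal{N}(K_{r-2},T_{r-2}(x_j-2))$ available \emph{non-circularly}, so that the convexity/counting argument (Claim 3.5) can force one group to cover $n-O(1)$ vertices; a final cloning/exchange argument (Claim 3.6, similar in spirit to your ``local exchange'') absorbs the leftover vertices. In short: your outline correctly guesses the target structure and the finishing moves, but the two mechanisms you propose for reaching that structure---Hilton--Milner at a max-degree vertex, and superadditivity over components---both fail quantitatively, and replacing them is precisely the content of the paper's F\"{u}redi-theorem machinery.
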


Note that $B_{r,0}$ represents the union graph of two disjoint copies of $K_r$. By F\"{u}redi's structure theorem, we determine $ex(n, K_r, B_{r,0})$ for $r\geq 3$ and $n$ sufficiently large.

\begin{thm}\label{thm8}
For $r\geq 3$ and sufficiently large $n$,
\begin{align*}
ex(n, K_r, B_{r,0})=\mathcal{N}(K_{r-1},T_{r-1}(n-1)),
\end{align*}
and $K_1\vee T_{r-1}(n-1)$ is the unique graph attaining the maximum number of copies of $K_r$.
\end{thm}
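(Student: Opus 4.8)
The plan is to locate a single vertex contained in every copy of $K_r$, after which the count collapses to a clique-counting bound in a $K_r$-free graph. For the lower bound, note that $K_1\vee T_{r-1}(n-1)$ is $B_{r,0}$-free: its non-apex part $T_{r-1}(n-1)$ is $(r-1)$-partite and hence $K_r$-free, so every copy of $K_r$ must consist of the apex $v$ together with a $K_{r-1}$ of $T_{r-1}(n-1)$; in particular all copies of $K_r$ contain $v$, so no two are vertex-disjoint. Since each $K_{r-1}$ of $T_{r-1}(n-1)$ extends uniquely to such a $K_r$, this graph has exactly $\mathcal{N}(K_{r-1},T_{r-1}(n-1))$ copies of $K_r$, giving $ex(n,K_r,B_{r,0})\ge \mathcal{N}(K_{r-1},T_{r-1}(n-1))$.

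For the upper bound, let $G$ be a $B_{r,0}$-free graph on $n$ vertices maximizing $\mathcal{N}(K_r,G)$, so by the lower bound $\mathcal{N}(K_r,G)\ge \mathcal{N}(K_{r-1},T_{r-1}(n-1))=\Theta(n^{r-1})$. Fix one copy of $K_r$ with vertex set $R_0$; since $G$ has no two disjoint $K_r$'s, every copy of $K_r$ meets $R_0$, and summing over the $r$ vertices of $R_0$ forces some $v\in R_0$ to lie in at least $\mathcal{N}(K_r,G)/r=\Theta(n^{r-1})$ copies of $K_r$, i.e. $G[N(v)]$ contains $\Theta(n^{r-1})$ copies of $K_{r-1}$. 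The main step is to upgrade this to: \emph{every} copy of $K_r$ contains $v$. First I would verify that $G[N(v)]$ is $K_r$-free, for a $K_r$ on a set $W\subseteq N(v)$ would have to meet every one of the $\Theta(n^{r-1})$ copies $\{v\}\cup Q$ of $K_r$ through $v$ (otherwise a disjoint pair arises), so the $r$-set $W$ would be a transversal of all $K_{r-1}$'s of $G[N(v)]$; but a set meeting every $K_{r-1}$ of a graph with $\Theta(n^{r-1})$ of them has size $\Omega(n)$, a contradiction for large $n$. The identical argument excludes any copy $R^*$ of $K_r$ with $v\notin R^*$: such an $R^*$ must meet every $K_{r-1}$ of $G[N(v)]$, again impossible. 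Hence all copies of $K_r$ pass through $v$.

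It then follows that $G-v$ is $K_r$-free, whence $\mathcal{N}(K_r,G)=\mathcal{N}(K_{r-1},G[N(v)])\le \mathcal{N}(K_{r-1},G-v)$, and the classical bound on the number of $K_{r-1}$'s in a $K_r$-free graph (obtainable via Zykov's symmetrization, with $T_{r-1}(n-1)$ the unique maximizer, and for which F\"{u}redi's structure theorem supplies the stability/uniqueness input) gives $\mathcal{N}(K_{r-1},G-v)\le \mathcal{N}(K_{r-1},T_{r-1}(n-1))$, matching the lower bound. For uniqueness, equality forces $G-v=T_{r-1}(n-1)$, and since every vertex of $T_{r-1}(n-1)$ lies in some $K_{r-1}$, it further forces $v$ to be adjacent to all of $V(G)\setminus\{v\}$, i.e. $G=K_1\vee T_{r-1}(n-1)$. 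I expect the main obstacle to be the common-vertex step: one must certify that the $\Theta(n^{r-1})$ copies of $K_{r-1}$ through $v$ are spread out enough that no bounded vertex set can transversal them, which is precisely where the quantitative clique-counting does its work; the reduction before it and the Tur\'{a}n-type optimization after it are comparatively routine.
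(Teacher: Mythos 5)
Your proof is correct, but it takes a genuinely different route from the paper. The paper's proof encodes the cliques of $G$ as an intersecting $r$-uniform family $\mathcal{F}$, applies F\"{u}redi's structure theorem (Theorem \ref{thm4}) once to get a well-structured subfamily $\mathcal{F}^*$, and then examines a minimal element $I_0$ of the intersection-closed restriction $\mathcal{I}(F,\mathcal{F}^*)$: if $|I_0|=1$, the Deza--Erd\H{o}s--Frankl sunflower lemma (Lemma \ref{lem-2}) forces that single vertex to lie in every hyperedge of $\mathcal{F}$, while if $|I_0|\geq 2$ a covering argument gives $|\mathcal{F}|=O(n^{r-2})$, which is too small. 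You reach the same pivotal conclusion --- a single vertex $v$ lying in every copy of $K_r$ --- by purely elementary counting: pigeonhole on a fixed copy $R_0$ produces a vertex $v$ in $\Theta(n^{r-1})$ copies of $K_r$, and then any $K_r$ avoiding $v$ would be an $r$-element transversal of the $\Theta(n^{r-1})$ copies of $K_{r-1}$ in $G[N(v)]$, impossible since each vertex covers only $O(n^{r-2})$ of them. Both proofs then finish identically via Zykov's theorem (Theorem \ref{thm7}) applied to the $K_r$-free graph $G-v$, including the uniqueness step, which you carry out correctly (every vertex of $T_{r-1}(n-1)$ lies in a $K_{r-1}$, forcing $v$ to be complete to $G-v$). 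Your argument is more self-contained and makes the dependence on ``sufficiently large $n$'' transparent, whereas the paper's machinery earns its keep in the harder companion result (Theorem \ref{thm3} for $B_{r,1}$), where the full structure theorem seems unavoidable; using it again for $B_{r,0}$ keeps the two proofs uniform. One small correction: your parenthetical remark that F\"{u}redi's structure theorem ``supplies the stability/uniqueness input'' for the clique-counting bound is a misattribution --- the statement that $T_{r-1}(n-1)$ uniquely maximizes the number of $K_{r-1}$'s among $K_r$-free graphs is Zykov's theorem itself (Theorem \ref{thm7}), proved by symmetrization, and needs no input from F\"{u}redi's theorem. This does not affect the validity of your proof.
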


Let $r,s$ be positive integers with $s<r$. An integer vector $(a_1,a_2,\ldots,a_t)$ is called a {\it partition} of $r$ if $a_1\geq a_2\geq \ldots\geq a_t>0$ and $\sum_{i=1}^t a_i=r$. Let $P=(a_1,a_2,\ldots,a_t)$ be a partition of $r$. If $\sum_{i\in I} a_i\neq s$ holds for every $I\subset \{1,2,\ldots,t\}$, then we call $P$ an {\it $s$-sum-free} partition of $r$. Denote by $\beta_{r,s}$ the maximum length of an $s$-sum-free partition of $r$.

\begin{thm}\label{thm9}
For any $r>s\geq 2$, if $r\geq 2s+1$,
\begin{align*}
ex(n, K_r, B_{r,s})=\Theta(n^{r-s-1});
\end{align*}
if $r\leq 2s$, then there exist positive reals $c_1$ and $c_2$ such that
\begin{align*}
c_1n^{\beta_{r,s}}\leq ex(n, K_r, B_{r,s})\leq c_2n^{s}.
\end{align*}
\end{thm}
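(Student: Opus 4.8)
The plan is to translate the problem into extremal set theory and then treat the two regimes in parallel. The first step is a clean reduction: a graph $G$ is $B_{r,s}$-free if and only if no two $r$-cliques of $G$ meet in exactly $s$ vertices. Indeed, if two $K_r$'s $A,B\subseteq V(G)$ satisfy $|A\cap B|=s$, then $A\cup B$ already realizes a copy of $B_{r,s}$ (all edges of $B_{r,s}$ are present, the absence of edges between the two private $(r-s)$-sets being irrelevant for containment), and conversely any copy of $B_{r,s}$ in $G$ exhibits two $r$-cliques sharing exactly $s$ vertices. Hence, writing $m(n,r,s)$ for the maximum size of a family $\mathcal{F}\subseteq\binom{[n]}{r}$ in which no two members intersect in exactly $s$ elements, we obtain $ex(n,K_r,B_{r,s})\le m(n,r,s)$; the upper bounds will come entirely from bounding $m(n,r,s)$, while the lower bounds require exhibiting actual graphs.

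For the lower bounds I would give one construction per regime. When $r\ge 2s+1$, take $G=K_{s+1}\vee T_{r-s-1}(n-s-1)$. Since the $K_{s+1}$ is complete to the Tur\'{a}n part and $T_{r-s-1}$ has clique number $r-s-1$, every $K_r$ of $G$ consists of the whole core $K_{s+1}$ together with a $K_{r-s-1}$ in $T_{r-s-1}(n-s-1)$; thus any two $K_r$'s share the $s+1$ core vertices and can never meet in exactly $s$, so $G$ is $B_{r,s}$-free and contains $\mathcal{N}(K_{r-s-1},T_{r-s-1}(n-s-1))=\Theta(n^{r-s-1})$ copies of $K_r$. When $r\le 2s$, fix an $s$-sum-free partition $(a_1,\dots,a_\ell)$ of $r$ of maximum length $\ell=\beta_{r,s}$ and set $G=H_1\vee\cdots\vee H_\ell$, where each $H_i$ is a disjoint union of $\lfloor n/(\ell a_i)\rfloor$ cliques $K_{a_i}$. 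As each $H_i$ has clique number $a_i$, every $K_r$ picks one full component $K_{a_i}$ from each $H_i$; two such cliques meet in $\sum_{i\in I}a_i$ vertices, where $I$ indexes the components they share, and this is never $s$ because the partition is $s$-sum-free. Hence $G$ is $B_{r,s}$-free with $\prod_i\lfloor n/(\ell a_i)\rfloor=\Theta(n^{\beta_{r,s}})$ copies of $K_r$, giving the lower bound $c_1 n^{\beta_{r,s}}$.

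For the upper bounds everything reduces to estimating $m(n,r,s)$, which is exactly the problem of Frankl and F\"{u}redi on forbidding a single intersection size, and whose threshold between the two behaviors is precisely $r=2s$. The fixed-core family of all $r$-sets through a fixed $(s+1)$-set shows $m(n,r,s)=\Omega(n^{r-s-1})$, and the theorem of Frankl and F\"{u}redi supplies the matching $m(n,r,s)=O(n^{r-s-1})$ for $r\ge 2s+1$ and $m(n,r,s)=O(n^{s})$ for $r\le 2s$, which yields both upper bounds. For the regime $r\le 2s$ the bound can also be reached by a link induction on $s$: for a fixed vertex $v$, the family $\{F\setminus\{v\}:v\in F\in\mathcal{F}\}$ is an $(r-1)$-uniform family in which no two members meet in exactly $s-1$ elements, so its size is at most $m(n-1,r-1,s-1)$; since $r\le 2s$ forces the pair $(r-1,s-1)$ to satisfy $m(n-1,r-1,s-1)=O(n^{s-1})$, summing over the $n$ vertices and dividing by $r$ gives $m(n,r,s)=O(n^{s})$.

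The main obstacle is the sharp upper bound $m(n,r,s)=O(n^{r-s-1})$ in the regime $r\ge 2s+1$ (and, equivalently, the base of the link induction at the boundary $r=2s$, which feeds on this very bound): the naive link-and-sum argument loses a factor of $n$ here, because the extremal configuration has nearly all of its cliques passing through one common $(s+1)$-set while most vertices have low degree, so averaging over all vertices is wasteful. Overcoming this requires the delta-system (sunflower) method of Frankl and F\"{u}redi, isolating a popular $(s+1)$-core and peeling it off, and it is this ingredient, rather than the constructions or the reduction, that carries the real weight of the proof.
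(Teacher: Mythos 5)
Your proposal is correct and follows essentially the same route as the paper: reduce to the Frankl--F\"{u}redi problem of forbidding a single intersection size (the paper's Theorem~\ref{ffthm}, via $ex(n,K_r,B_{r,s})\leq ex_r(n,B^{(r)}_s)$) for the upper bounds, and use the constructions $K_{s+1}\vee T_{r-s-1}(n-s-1)$ and the join of disjoint unions of $K_{a_i}$'s over an $s$-sum-free partition for the lower bounds. Your additional link-induction sketch for $r\leq 2s$ is a harmless alternative but not genuinely different, since, as you note, its base case rests on the same Frankl--F\"{u}redi delta-system bound.
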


Utilizing the graph removal lemma, we establish an upper bound on $ex(n,K_4,B_{4,2})$.

\begin{thm}\label{thm2}
For sufficiently large $n$,
\begin{align*}
\frac{n^2}{12}-2\leq ex(n,K_4, B_{4,2})\leq\frac{n^2}{9}+o(n^2).
\end{align*}
\end{thm}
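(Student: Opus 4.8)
The plan is to establish the lower bound by an explicit construction and the upper bound by analysing the common-neighbourhood structure of $B_{4,2}$-free graphs and then invoking the graph removal lemma to pin down the leading constant.

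For the lower bound I would take $s=\lfloor n/6\rfloor$ pairwise vertex-disjoint triangles $T_1,\dots,T_s$ together with an independent set $W$ consisting of the remaining $n-3s$ vertices, and join every vertex of every $T_i$ to every vertex of $W$, adding no other edges. Since $W$ is independent and there is no edge between distinct triangles, every copy of $K_4$ consists of one triangle $T_i$ and one vertex $w\in W$, so $\mathcal N(K_4,G)=s(n-3s)$. Two such copies sharing a vertex $w$ lie on disjoint triangles and meet only in $w$, whereas two copies on the same triangle meet in three vertices; hence no two copies of $K_4$ meet in exactly two vertices and $G$ is $B_{4,2}$-free. Maximising $s(n-3s)$ over integers $s$ yields at least $n^2/12-2$ copies, which is the claimed lower bound.

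For the upper bound the first step is the structural observation that, for every edge $uv$ lying in a copy of $K_4$, the common neighbourhood $N(u)\cap N(v)$ induces either a star or a triangle. Indeed, each edge of $G[N(u)\cap N(v)]$ yields a copy of $K_4$ through $uv$, and two such copies already share $u$ and $v$; as $G$ is $B_{4,2}$-free they must in fact share three vertices, so any two edges of $G[N(u)\cap N(v)]$ meet in exactly one vertex, which forces a star or a triangle. Thus the copies of $K_4$ through $uv$ either form a book over a common triangle (the star case) or span a $K_5$ (the triangle case). Calling a triangle a \emph{spine} if it extends to a copy of $K_4$ and \emph{heavy} if it extends to at least two, this organises the copies of $K_4$ into books; moreover a heavy spine is the centre triangle of each of its three edges, and the heavy spine of an edge is unique, so distinct heavy spines are edge-disjoint and hence pairwise share at most one vertex.

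This organisation already yields the correct order of magnitude. Copies of $K_4$ extending no heavy spine pairwise meet in at most one vertex, so each pair of vertices lies in at most one of them and there are at most $\binom n2/6=O(n^2)$ of them. For the remaining copies, if two heavy spines share a single vertex $x$ then a common page $w$ would create two copies of $K_4$ meeting in exactly $\{x,w\}$, a forbidden $B_{4,2}$; hence the page sets of the heavy spines through any fixed vertex are pairwise disjoint. Summing page counts over the three vertices of each spine bounds the number of (heavy spine, page) incidences, and hence $\mathcal N(K_4,G)$, by $O(n^2)$. To sharpen this crude bound to the constant $1/9$ I would feed the book structure into the graph removal lemma: after deleting $o(n^2)$ edges one may reduce an arbitrary $B_{4,2}$-free configuration to an almost vertex-disjoint union of books, after which the number of copies of $K_4$ is governed by an extremal optimisation of value $n^2/9$, up to an $o(n^2)$ error from the deleted edges and the $K_5$ case. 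The main obstacle is exactly this last step: the purely local structure is too weak to give the sharp constant, so the real work is in using the removal lemma to pass to the extremal book structure and in carrying out the resulting optimisation to obtain $n^2/9+o(n^2)$ rather than merely $O(n^2)$.
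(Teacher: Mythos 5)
Your lower bound is exactly the paper's construction and is fine. The upper bound, however, has a genuine gap, and you have located it yourself: everything you prove (common neighbourhoods of edges are stars or triangles, books, edge-disjoint heavy spines) only yields $\mathcal{N}(K_4,G)=O(n^2)$, and the step that is supposed to produce the constant $1/9$ is left as a wish --- ``feed the book structure into the removal lemma, reduce to an almost vertex-disjoint union of books, then an extremal optimisation gives $n^2/9$.'' That reduction target cannot work: in a vertex-disjoint union of books the number of copies of $K_4$ is at most the number of vertices, i.e.\ $O(n)$, and indeed the extremal configuration (your own lower-bound graph) consists of books whose spines are disjoint triangles but whose page sets all coincide with the independent set $W$. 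So no removal-lemma argument can make the books almost vertex-disjoint while preserving $\Theta(n^2)$ copies of $K_4$; the structure you would need to isolate is ``disjoint spines, freely shared pages,'' and your sketch contains no mechanism for counting that structure sharply.

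The paper's actual mechanism, which is what is missing, is a counting argument on edge multiplicities rather than an optimisation over books. First, the removal lemma is used to destroy all copies of $K_5$ at a cost of $o(n^2)$ copies of $K_4$ (each edge lies in at most one $K_5$, so there are only $O(n^2)=o(n^5)$ of them, and every deleted edge lies in at most three copies of $K_4$, all inside its unique $K_5$). Next, edges are split into $R$ (contained in at least two copies of $K_4$) and $B$ (at most one); since $G[B]$ has only $O(n^2)=o(n^4)$ copies of $K_4$, a second application of the removal lemma deletes $o(n^2)$ edges of $B$, losing only $o(n^2)$ copies of $K_4$, so that the surviving $B$-edges induce a $K_4$-free graph. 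One then shows that every surviving copy of $K_4$ is ``right-coloured'': three of its edges lie in $R$ and form a triangle (your heavy spine), and the other three lie in $B$ and form a star at the page vertex. The punchline is then two lines: the $B$-edges form a $K_4$-free graph, so by Tur\'an's theorem there are at most $n^2/3$ of them; each surviving copy of $K_4$ consumes three $B$-edges and each $B$-edge lies in at most one copy of $K_4$, whence $\mathcal{N}(K_4,G^*)\leq |B^*|/3\leq n^2/9$. It is this combination --- Tur\'an applied globally to the low-multiplicity edges, divided by $3$ --- that produces the constant $1/9$, and nothing in your proposal substitutes for it.
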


The rest of this paper is organized as follows. In Section 2, we prove Theorem \ref{thm1}. In Section 3, we prove Theorems \ref{thm3} and \ref{thm8}. In Section 4, we prove Theorem \ref{thm9}. In Section 5, we prove Theorem \ref{thm2}.

\section{The value of $ex(n,K_4,B_{4,1})$}

Zykov \cite{zykov} introduced a useful tool to prove Tur\'{a}n theorem, which is called Zykov's symmetrization. In this section, by Zykov's symmetrization we first determine $ex(n,K_4,\{B_{4,1},$\\$H_1,K_5\})$, where $H_1$ is a graph on seven vertices as shown in Figure \ref{fig:fig}. Then, we show that a $B_{4,1}$-free graph can be reduced to a $\{B_{4,1},H_1,K_5\}$-free graph by deleting vertices, which leads to a proof of Theorem \ref{thm1}.

\begin{figure}[h]
	\centering
\begin{tikzpicture}
\begin{scope}[scale=2]
\filldraw (3,0) circle (0.8pt);
\filldraw (3.8,0) circle (0.8pt);
\filldraw (4.6,0) circle (0.8pt);
\filldraw (3.8,0.5) circle (0.8pt);
\filldraw (3.3,0.8) circle (0.8pt);
\filldraw (4.3,0.8) circle (0.8pt);
\filldraw (3.8,1.2) circle (0.8pt);
\draw (3,0)--(3.8,0)--(4.6,0)--(3.8,0.5)--(3,0);
\draw (3.8,0)--(3.8,0.5);
\draw (3.8,0.5)--(3.3,0.8)--(3,0);
\draw (3.3,0.8)--(3.8,0);
\draw (4.3,0.8)--(3.8,0);
\draw (3.8,0.5)--(4.3,0.8)--(4.6,0);
\draw (3.3,0.8)--(3.8,1.2)--(4.3,0.8)--(3.3,0.8);
\draw (3.8,1.2)--(3.8,0.5);
\end{scope}
\end{tikzpicture}
	\caption{A graph $H_1$ on seven vertices. }
	\label{fig:fig}
\end{figure}
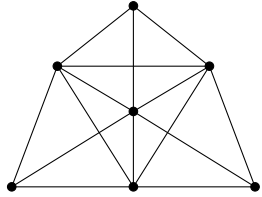

For $S\subset V(G)$, let $G[S]$ denote the subgraph of $G$ induced by $S$, and let $G - S$ denote the subgraph of $G$ induced by $V(G)\setminus S$.

\begin{lem}\label{lem-1}
For $n\geq 2$,
\begin{align*}
ex(n,K_4, \{B_{4,1}, H_1, K_5\})=\left\lfloor\frac{(n-2)^2}{4}\right\rfloor,
\end{align*}
and $K_2\vee T_2(n-2)$ is the unique graph attaining the maximum number of $K_4$'s.
\end{lem}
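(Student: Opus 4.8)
The plan is to establish the lower bound by a direct construction and the matching upper bound through Zykov's symmetrization, reducing an arbitrary extremal graph to a complete multipartite one and then optimizing among those.

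For the lower bound I would check that $G_0:=K_2\vee T_2(n-2)$ lies in the class and attains the claimed count. Writing $\{a,b\}$ for the two apex vertices and $X,Y$ for the two colour classes of $T_2(n-2)$, every copy of $K_4$ must avoid taking two vertices from the same independent class, so it is forced to be $\{a,b,x,y\}$ with $x\in X$, $y\in Y$; hence $\mathcal{N}(K_4,G_0)=|X|\,|Y|=\lfloor (n-2)^2/4\rfloor$. Since every $K_4$ contains the edge $ab$, any two of them meet in at least two vertices, so $G_0$ is $B_{4,1}$-free; its clique number is $4$, so it is $K_5$-free; and since all its copies of $K_4$ share the edge $ab$ whereas the four copies of $K_4$ inside $H_1$ have no common edge, no copy of $H_1$ embeds, so $G_0$ is $H_1$-free.

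For the upper bound, let $G$ be a $\{B_{4,1},H_1,K_5\}$-free graph on $n$ vertices with $\mathcal{N}(K_4,G)$ maximum, and for a vertex $w$ let $d(w)$ count the copies of $K_4$ through $w$. The symmetrization step picks non-adjacent $u,v$ with $d(u)\ge d(v)$ and overwrites $N(v)$ by $N(u)$, turning $v$ into a non-adjacent twin of $u$; a standard count (the $K_4$'s avoiding $v$ and those through $u$ are untouched, and no $K_4$ uses both $u,v$) shows this alters $\mathcal{N}(K_4,\cdot)$ by exactly $d(u)-d(v)\ge 0$, so it never decreases the number of $K_4$'s. Iterating until non-adjacency becomes transitive produces a complete multipartite graph $G^\ast$ with at least as many copies of $K_4$. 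Among complete multipartite graphs the classification is then clean: $K_5$-freeness forces at most four parts, the presence of a $K_4$ forces exactly four parts, and $B_{4,1}$-freeness forces at least two of the parts to be singletons (otherwise three parts of size $\ge 2$ would let one build two copies of $K_4$ meeting in a single vertex). Taking more than two singletons is clearly suboptimal, so the optimum has part sizes $1,1,n_3,n_4$ with $n_3+n_4=n-2$ and $K_4$-count $n_3n_4$, maximized precisely when $\{n_3,n_4\}=\{\lceil (n-2)/2\rceil,\lfloor (n-2)/2\rfloor\}$, i.e. by $K_2\vee T_2(n-2)$; since any such graph has all its copies of $K_4$ through a common edge, $H_1$ cannot occur and the remaining constraint is automatic.

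The technical heart, and the step I expect to be the main obstacle, is showing that the symmetrization can be performed without leaving the class $\{B_{4,1},H_1,K_5\}$, since cloning a non-clique forbidden subgraph need not preserve it. Two of the three dangers dissolve by a pullback: a new $K_5$ through the twin $v$ would, via $N(v)=N(u)$, yield a $K_5$ through $u$ in the old graph, and a new $B_{4,1}$ whose \emph{shared} vertex is $v$ would yield one centred at $u$, both impossible for $G$. The genuinely delicate case is a new $B_{4,1}$ in which $v$ is an \emph{off-centre} vertex, say with one $K_4$ equal to $\{v,w,x,y\}$ and the other equal to $\{w,p,q,r\}$ meeting it only in $w$; transporting $v$ back to $u$ produces a $K_4$ on $\{u,w,x,y\}$, and if $u\notin\{p,q,r\}$ this already forms a forbidden $B_{4,1}$ in $G$, so necessarily $u\in\{p,q,r\}$ and $G$ carries two copies of $K_4$ sharing exactly the edge $uw$ with disjoint far endpoints, while $v$ stays non-adjacent to $u$. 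The plan is to show that this configuration together with the twin either embeds a copy of $H_1$ in $G$ or admits a different count-preserving modification, so that the bad step can always be avoided; making this rigorous is exactly the role for which $H_1$ has been placed in the forbidden family, and is where I would spend most of the effort. Once the symmetrization is shown to preserve the class, tracking the forced equalities $d(u)=d(v)$ at every step, together with the strict gain in $n_3n_4$ from balancing, yields that $K_2\vee T_2(n-2)$ is the unique extremizer.
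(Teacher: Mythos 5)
Your lower-bound verification and the clean cases of the symmetrization (a new $K_5$, or a new $B_{4,1}$ centred at the clone) are fine, and you have correctly located the crux: a cloning step can create a $B_{4,1}$ in which the clone is off-centre, which forces two copies of $K_4$ in $G$ meeting in exactly the edge $uw$. But your proposed resolution of this case --- that such a configuration ``either embeds a copy of $H_1$ in $G$ or admits a different count-preserving modification'' --- cannot work as stated, because the configuration is not contradictory and is not even avoidable: two copies of $K_4$ sharing exactly one edge occur in a $\{B_{4,1},H_1,K_5\}$-free graph with no $H_1$ anywhere nearby (take two $K_4$'s glued along an edge plus an isolated vertex $v$; cloning $u$ onto $v$ creates a $B_{4,1}$, yet the original graph contains no $H_1$, since $H_1$ has a vertex of degree $6$). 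Worse, the extremal graph $K_2\vee T_2(n-2)$ itself contains such a pair (any two of its $K_4$'s share exactly the apex edge $ab$), so no argument can rule these edges out of an extremal graph; and the ``different count-preserving modification'' is exactly the missing idea, not a routine patch. Consequently your conclusion that iterated symmetrization turns the \emph{whole} extremal graph into a complete multipartite graph is unjustified, and everything after it (at most four parts, two singletons, optimize $n_3n_4$) rests on that unproved reduction.

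The paper's proof closes precisely this hole by a structural decomposition rather than a global symmetrization. It defines $E_1$ to be the set of edges lying in two copies of $K_4$ that share only that edge, proves $E_1$ is a matching, and performs Zykov symmetrization only on $G_2=G-V(G_1)$, where $G_1$ is the subgraph induced by $E_1$; the class-preservation claim (its Claim 2) is proved exactly under the hypothesis $u,v\in V(G_2)$, and the bad case you isolated is dispatched there because it would force an $E_1$-edge at a vertex of $G_2$, a contradiction by construction (not by exhibiting $H_1$). This only yields that $G_2$ is complete $r$-partite with $r\le 4$; the paper then needs a further attachment lemma (any $K_4$ meets an $E_1$-edge in $0$ or $2$ vertices), a Mantel-type bound $\lfloor n_1^2/16\rfloor$ on the $K_4$'s inside $V(G_1)$, and a case analysis over $r$ optimizing
\begin{align*}
f(x)=\left\lfloor\frac{x^2}{16}\right\rfloor+\left\lfloor\frac{(n-x)^2}{4}\right\rfloor
\end{align*}
to combine the two parts. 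Your final optimization, which counts only $K_4$'s of the complete multipartite graph, omits these contributions entirely (in the extremal graph all $K_4$'s use the matching edge $ab$, i.e.\ they are of the ``mixed'' type), so even granting a corrected symmetrization the counting step of your proposal would still need to be rebuilt along these lines.
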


\begin{proof}
Assume that $G$ is a $\{B_{4,1}, H_1, K_5\}$-free graph with the maximum number of copies of $K_4$.  We may further assume that  each edge of $G$ is contained in at least one copy of $K_4$, since otherwise we can delete it without decreasing the number of copies of $K_4$. For each $e\in E(G)$, let $\mathcal{K}_4(e)$ denote the set of copies of $K_4$ in $G$ containing $e$. Let
\[
E_1= \left\{e\in E(G)\colon \mbox{there exist }K,K'\in \mathcal{K}_4(e)\mbox{ such that }E(K)\cap E(K')=\{e\}\right\}
\]
and let $G_1$ be the subgraph of $G$ induced by $E_1$.

\begin{claim}\label{claim3}
$E_1$ is a matching of $G$.
\end{claim}
\begin{proof}
Suppose to the contrary that there exists a path of length two in $G_1$, say $vuw$.  Since $uv\in E_1$, there exist distinct vertices $a_1,b_1,a_2,b_2$ so that both $G[\{u,v,a_1,b_1\}]$ and $G[\{u,v,a_2,b_2\}]$ are copies of $K_4$. Since $uw\in E_1$, there exist distinct vertices $c_1,d_1,c_2,d_2$ so that both $G[\{u,w,c_1,d_1\}]$ and $G[\{u,w,c_2,d_2\}]$ are copies of $K_4$.

{ Case 1.} $w\in\{a_1,b_1,a_2,b_2\}$ or $v\in\{c_1,d_1,c_2,d_2\}$.

Since the two cases are symmetric, we only consider the case $w\in\{a_1,b_1,a_2,b_2\}$. By symmetry, we may assume that $a_1=w$. Now $G[\{u,v,w,b_1\}]$ and $G[\{u,v,a_2,b_2\}]$ are both copies of $K_4$. Clearly, we have either $v\notin \{c_1,d_1\}$ or $v\notin \{c_2,d_2\}$. Without loss of generality, assume that $v\notin\{c_1,d_1\}$. If $\{c_1,d_1\}\cap\{a_2,b_2\}=\emptyset$, then $G[\{u,v,w,a_2,b_2,c_1,d_1\}]$ contains a copy of $B_{4,1}$, which contradicts the assumption that $G$ is $B_{4,1}$-free. If $|\{c_1,d_1\}\cap\{a_2,b_2\}|=1$, by symmetry we assume that $c_1=a_2$, then $G[\{u,v,w,b_1,a_2,b_2,d_1\}]$ contains a copy of $H_1$, a contradiction. If $\{c_1,d_1\}=\{a_2,b_2\}$, then $G[\{u,v,w,a_2,b_2\}]$ is a copy of $K_5$, a contradiction.

{ Case 2.}  $w\notin\{a_1,b_1,a_2,b_2\}$ and $v\notin\{c_1,d_1,c_2,d_2\}$.

For $i,j\in \{1,2\}$, we claim  that $|\{a_i,b_i\}\cap\{c_j,d_j\}|=1$. If $\{a_i,b_i\}\cap \{c_j,d_j\}=\emptyset$, then $G[\{u,v,w,a_i,b_i,c_j,d_j\}]$ contains $B_{4,1}$ as a subgraph, a contradiction. If $\{a_i,b_i\}=\{c_j,d_j\}$, then $G[\{u,v,w,a_i,b_i,c_i,d_i\}]$ contains $B_{4,1}$ as a subgraph, a contradiction. Hence $|\{a_i,b_i\}\cap\{c_j,d_j\}|=1$. It follows that $\{a_1,b_1,a_2,b_2\}=\{c_1,d_1,c_2,d_2\}$. Then $G[\{u,v,w,a_1,b_1,a_2,b_2\}]$ contains $H_1$ as a subgraph, a contradiction. Thus, the claim holds.\end{proof}

Let $G_2=G- V(G_1)$. For two distinct vertices $u,v\in V(G)$ with $uv\notin E(G)$, define  $C_{uv}(G)$ to be the graph obtained by deleting edges incident to $u$ and adding edges in $\{uw\colon w\in N(v)\}$.

\begin{claim}\label{claim22}
For two distinct vertices $u,v\in V(G_2)$ with $uv\notin E(G)$,  $C_{uv}(G)$ is a $\{B_{4,1}, H_1, K_5\}$-free graph.
\end{claim}

\begin{proof}
Let $\tilde{G} = C_{uv}(G)$. Since $uv\notin E(G)$, clearly we have $uv\notin E(\tilde{G})$. We first claim that $\tilde{G}$ is $K_5$-free. Otherwise, since $G$ is $K_5$-free, there is a vertex set $K$ containing $u$ such that $\tilde{G}[K]\cong K_5$. Then $v\notin K$ since $uv\notin E(\tilde{G})$. It follows that $K\setminus\{u\}\cup \{v\}$ induces a copy of $K_5$ in $G$, a contradiction.

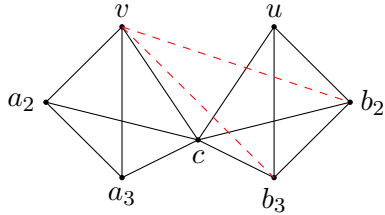
\begin{figure}[h]
	\centering
\begin{tikzpicture}[scale=1]
\filldraw (0,0) circle (0.8pt);
\filldraw (1,1) circle (0.8pt);
\filldraw (1,-1) circle (0.8pt);
\filldraw (2,-0.5) circle (0.8pt);
\filldraw (3,1) circle (0.8pt);
\filldraw (3,-1) circle (0.8pt);
\filldraw (4,0) circle (0.8pt);

\draw(0,0) node[left] {$a_2$};\draw(1,1) node[above] {$v$};
\draw(1,-1) node[below] {$a_3$};\draw(2,-0.5) node[below] {$c$};\draw(3,1) node[above] {$u$};\draw(3,-1) node[below] {$b_3$};
\draw(4,0) node[right] {$b_2$};
\draw (2,-0.5) -- (1,1) -- (0,0) --(1,-1)--(2,-0.5)--(3,1)--(4,0)--(3,-1)--(2,-0.5);
\draw (0,0)--(2,-.5)--(4,0);\draw (1,1)--(1,-1);\draw (3,1)--(3,-1);
\draw[dashed, red] (1,1) -- (3,-1);
\draw[dashed, red] (1,1) -- (4,0);
\end{tikzpicture}

	\caption[]{A copy of $B_{4,1}$ in $\tilde{G}$.}
	\label{fig:fz}
\end{figure}

If $\tilde{G}$ contains a copy of $B_{4,1}$, let $S=\{a_1,a_2,a_3,b_1,b_2,b_3,c\}$ be a subset of $V(\tilde{G})$ such that both $\tilde{G}[\{a_1,a_2,a_3,c\}]$ and $\tilde{G}[\{b_1,b_2,b_3,c\}]$ are copies of $K_4$. If $u\notin S$, then $G[S]$ is a copy of $B_{4,1}$, a contradiction. If $u\in S$ but $v\notin S$, then $G[(S\setminus\{u\})\cup \{v\}]$ is a copy of $B_{4,1}$, a contradiction. If $u,v\in S$, since $uv\notin E(\tilde{G})$, by symmetry we may assume that $a_1=v$ and $b_1=u$.
Since $u$ is a ``clone" of $v$ in $\tilde{G}$, we have $vb_2,vb_3\in E(G)$ (as shown in Figure \ref{fig:fz}). Then both $G[\{v,c,a_2,a_3\}]$ and $G[\{v,c,b_2,b_3\}]$ are copies of $K_4$ in $G$. It follows that $vc$ is an edge in $E_1$ in $G$, which contradicts the assumption that $v\in V(G)\setminus V(G_1)$. Thus $\tilde{G}$ is $B_{4,1}$-free.

\begin{figure}[h]
	\centering
\begin{tikzpicture}
\begin{scope}[scale=2]
\filldraw (3,0) circle (0.8pt);\draw(3,0) node[below] {$l$};
\filldraw (3.8,0) circle (0.8pt);\draw(3.8,0) node[below] {$m$};
\filldraw (4.6,0) circle (0.8pt);\draw(4.6,0) node[below] {$n$};
\filldraw (3.8,0.5) circle (0.8pt);\draw(3.8,0.5) node[right] {$k$};
\filldraw (3.3,0.8) circle (0.8pt);\draw(3.3,0.8) node[below] {$i$};
\filldraw (4.3,0.8) circle (0.8pt);\draw(4.3,0.8) node[below] {$j$};
\filldraw (3.8,1.2) circle (0.8pt);\draw(3.8,1.2) node[above] {$h$};
\draw (3,0)--(3.8,0)--(4.6,0)--(3.8,0.5)--(3,0);
\draw (3.8,0)--(3.8,0.5);
\draw (3.8,0.5)--(3.3,0.8)--(3,0);
\draw (3.3,0.8)--(3.8,0);
\draw (4.3,0.8)--(3.8,0);
\draw (3.8,0.5)--(4.3,0.8)--(4.6,0);
\draw (3.3,0.8)--(3.8,1.2)--(4.3,0.8)--(3.3,0.8);
\draw (3.8,1.2)--(3.8,0.5);
\end{scope}
\end{tikzpicture}
	\caption{A copy of $H_1$ in $\tilde{G}$. }
	\label{fig:fig-3}
\end{figure}
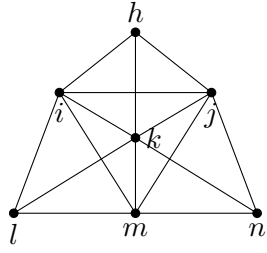

If $\tilde{G}$ contains a copy of $H_1$, let $T=\{h,i,j,k,l,m,n\}$ be a subset of $V(\tilde{G})$ such that $\tilde{G}[\{h,i,j,k\}]$, $\tilde{G}[\{i,j,k,m\}]$, $\tilde{G}[\{i,k,l,m\}]$ and $\tilde{G}[\{j,k,m,n\}]$ are all copies of $K_4$ as shown in Figure \ref{fig:fig-3}. Similarly, we have $u,v\in T$. Since $uv\notin E(\tilde{G})$, by symmetry we have to consider three cases: (i) $h=u$, $n=v$; (ii) $h=u$, $m=v$ or (iii) $h=v$, $m=u$. If $h=u$ and $n=v$, then $vi\in E(G)$ since $ui\in E(\tilde{G})$. It follows that $\{i,j,k,m,v\}$ induces a copy of $K_5$ in $G$, which contradicts the assumption that $G$ is $K_5$-free. If $h=u$ and $m=v$, then $kv\in E_1$ since both $G[\{k,v,i,l\}]$ and $G[\{k,v,j,n\}]$ are copies of $K_4$, which contradicts the fact that $v\in V(G_2)$. If $h=v$ and $m=u$, then $vl,vn\in E(G)$ since $ul,un\in E(\tilde{G})$. It follows that both $G[\{k,v,i,l\}]$ and $G[\{k,v,j,n\}]$ are copies of $K_4$, which contradicts the fact that $v\in V(G_2)$. Hence $\tilde{G}$ is $H_1$-free.
\end{proof}

By Zykov symmetrization, we prove the following claim.

\begin{claim}\label{claim4}
$G_2$ is a complete $r$-partite graph with $r\leq 4$.
\end{claim}
\begin{proof}  Recall that  $G$ is a $\{B_{4,1}, H_1, K_5\}$-free graph with the maximum number of copies of $K_4$ and  each edge of $G$ is contained in at least one copy of $K_4$. We define a binary relation $R$ in $V(G_2)$ as follows: for any two vertices $x,y\in V(G_2)$, $xRy$ if and only if $xy\notin E(G)$. We shall show that $R$ is an equivalence relation.  Since $G$ is loop-free, it follows that $R$ is reflexive. Since $G$ is a undirected graph, it follows that $R$ is symmetric.

Now we show that $R$ is transitive. Suppose to the contrary that there exist $x,y,z\in V(G_2)$ such that $xy, yz\notin E(G_2)$ but $xz\in E(G_2)$. For $u,v\in V(G_2)$, let $k_4(u)$ be the number of copies of $K_4$ in $G$ containing $u$, and $k_4(u,v)$ be the number of copies of $K_4$ in $G$ containing $u$ and $v$.

{ Case 1.}  $k_4(y)<k_4(x)$ or $k_4(y)<k_4(z)$. Since the two cases are symmetric, we only consider the case $k_4(y)<k_4(x)$. Let $\tilde{G}=C_{yx}(G)$. By Claim \ref{claim22}, $\tilde{G}$ is $\{B_{4,1}, H_1, K_5\}$-free since $G$ is $\{B_{4,1}, H_1, K_5\}$-free.   But now we have
\[
\hn(K_4,\tilde{G}) = \hn(K_4,G) -k_4(y) +k_4(x)> \hn(K_4,G),
\]
which contradicts the assumption that $G$ is a $\{B_{4,1}, H_1, K_5\}$-free graph with the maximum number of copies of $K_4$.

{ Case 2.} $k_4(y)\geq k_4(x)$ and $k_4(y)\geq k_4(z)$. Let $G^*=C_{xy}(C_{zy}(G))$. By Claim \ref{claim22}, $G^*$ is $\{B_{2,1}, H_1, K_5\}$-free. Since each edge in $G$ is contained in at least one copies of $K_4$, it follows that
\begin{align*}
\mathcal{N}(K_4, G^*)&=\mathcal{N}(K_4, G)-(k_4(x)+k_4(z)-k_4(x,z))+2k_4(y)\\
&\geq \mathcal{N}(K_4, G)+k_4(x,z)\\
&>\mathcal{N}(K_4, G),
\end{align*}
which contradicts the assumption that $G$ is a $\{B_{4,1}, H_1, K_5\}$-free graph with the maximum number of copies of $K_4$. Thus, we conclude that $xz\notin E(G)$ and $R$ is transitive. Since $R$ is an equivalence relation on $V(G_2)$ and $G$ is $K_5$-free, it follows that $G_2$ is a complete $r$-partite graph with $r\leq 4$.
\end{proof}

\begin{claim}\label{claim5}
For any copy $K$ of $K_4$ in $G$ and any $uv\in E_1$,  $|V(K)\cap \{u,v\}|\neq 1$.
\end{claim}

\begin{proof}
Suppose for contradiction that there exists $\{a, b,c,d,v\}\subset V(G)$ such that $G[\{a,b,c,d\}]$ is isomorphic to $K_4$ and $bv$ is an edge in $E_1$, as shown in Figure \ref{fig:f3}.

\begin{figure}[h]
	\centering
\begin{tikzpicture}[scale=1.5]
\filldraw (0,1) circle (0.8pt);
\filldraw (0,0) circle (0.8pt);
\filldraw (1,0) circle (0.8pt);
\filldraw (1,1) circle (0.8pt);
\filldraw (2,1.3) circle (0.8pt);
\draw(0,1) node[left] {$a$};\draw(0,0) node[left] {$d$};
\draw(1,1) node[above] {$b$};\draw(1,0) node[right] {$c$};
\draw(2,1.3) node[right] {$v$};
\draw (0,0) -- (0,1) -- (1,1) --(1,0)--(0,0)--(1,1)--(2,1.3);
\draw (0,1)--(1,0);
\end{tikzpicture}
	\caption{An edge in $E_1$ is attached to a copy of $K_4$.}
	\label{fig:f3}
\end{figure}
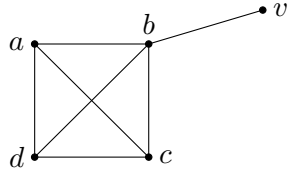

Since $bv\in E_1$, there exist distinct vertices $x_1,y_1,x_2,y_2$ such that both $G[\{b,v,x_1,y_1\}]$ and $G[\{b,v,x_2,y_2\}]$ are copies of $K_4$ in $G$. Then either  $|\{x_1,y_1\}\cap \{a,c,d\}|\leq 1$ or $|\{x_2,y_2\}\cap \{a,c,d\}|\leq 1$ holds since  $x_1,y_1,x_2,y_2$ are distinct. By symmetry, we assume that  $|\{x_1,y_1\}\cap \{a,c,d\}|\leq 1$. If $\{x_1,y_1\}\cap \{a,c,d\}=\emptyset$, then $G[\{b,v,x_1,y_1,a,c,d\}]$ contains a copy of $B_{4,1}$, a contradiction. If $|\{x_1,y_1\}\cap \{a,c,d\}|=1$, without loss of generality, we assume that $x_1=a$. Since both $G[\{a,b,x_2,v\}]$ and $G[\{a,b,c,d\}]$ are copies of $K_4$, it follows that $ab\in E_1$, which contradicts Claim \ref{claim3}. Thus, we conclude that $|V(K)\cap \{u,v\}|\neq 1$ for any copy $K$ of $K_4$ in $G$ and any $uv\in E_1$.
\end{proof}

Now let $K$ be a copy of $K_4$ in $G$. Recall that $E_1$ is a matching in $G$ and $G_1$ is the graph induced by $E_1$. If $|V(K)\cap V(G_1)|=1$ or 3, then we will find an edge in $E_1$ attached to $K$, which contradicts  Claim \ref{claim5}. Thus $|V(K)\cap V(G_1)|\in \{0,2,4\}$. Moreover, if $|V(K)\cap V(G_1)|=2$, let $\{x,y\}= V(K)\cap V(G_1)$, then by Claim \ref{claim5} we have  $xy\in E_1$. Recall that  $\mathcal{K}_4(e)$ represents the set of copies of $K_4$ in $G$ containing $e$ for $e\in E(G)$. Define
\begin{align*}
\hk_1(G)=&\{K\colon K\mbox{ is a copy of }K_4 \mbox{ in } G \mbox{ and } V(K)\subset V(G_1)\};\\
\hk_2(G)=&\{K\colon K\mbox{ is a copy of }K_4 \mbox{ in } G \mbox{ and } V(K)\subset V(G_2)\};\\
\hk_3(G)=&\{K\colon K \in \hk_4(e) \mbox{ for some }e \in E_1\mbox{ and } |V(K)\cap V(G_1)|=2\}.
\end{align*}

Let $|V(G_1)|=n_1$, $|V(G_2)|=n-n_1=n_2$. Since $E_1$ is a matching, it follows that $n_1$ is even. By Claim \ref{claim5}, for any $K\in \hk_1(G)$ we have $E(K)\cap E_1$ is a matching of size 2. To derive an upper bound on $|\hk_1(G)|$, we define a graph $H$ with  $V(H)=E_1$ as follows. For any $e_1,e_2\in E_1$, $e_1e_2$ is an edge of $H$ if and only if there exists a copy of $K_4$ containing both $e_1$ and $e_2$. Since $G$ is $K_5$-free, it is easy to see that $H$ is triangle-free. Moreover, each copy of $K_4$ in $G$  corresponds to an edge in $H$. Thus, by Mantel's Theorem \cite{mantel} we have
\[
|\hk_1(G)|= e(H) \leq \left\lfloor\frac{|E_1|^2}{4}\right\rfloor=\left\lfloor\frac{n_1^2}{16}\right\rfloor.
\]

We have shown that $G_2$ is a complete $r$-partite graph with $r\leq 4$ in Claim \ref{claim4}. If $r\leq 1$, then $\hk_2(G)= \hk_3(G)=\emptyset$. Thus, we have
\[
\hn(K_4,G)=|\hk_1(G)|\leq \left\lfloor\frac{n_1^2}{16}\right\rfloor \leq \left\lfloor\frac{n^2}{16}\right\rfloor \leq \left\lfloor\frac{(n-2)^2}{4}\right\rfloor,
\]
where the equalities hold if and only if $n=4$ and $G$ is isomorphic to $K_4$.

If $r=2$, then $\hk_2(G)=\emptyset$. If $n_1=0$, then we have $\hn(K_4,G)=0$. Hence we may assume that $n_1\geq 2$. We claim that  each edge in $E(G_2)$ is contained in at most one copy of $K_4$ in $\hk_3(G)$.  Otherwise, by the definition of $\hk_3(G)$, there exists an edge $e\in E(G_2)$ contained in two distinct copies of $K_4$, which contradicts the fact that $e\notin E_1$. Then
\[
|\hk_3(G)| \leq e(G_2) \leq \left\lfloor\frac{n_2^2}{4}\right\rfloor.
\]
Thus, we have
\[
\hn(K_4,G)=|\hk_1(G)|+|\hk_3(G)| \leq \left\lfloor\frac{n_1^2}{16}\right\rfloor+\left\lfloor\frac{n_2^2}{4}\right\rfloor.
\]
For even integer $x$ with  $2\leq x\leq n$, let
\[
f(x) = \left\lfloor\frac{x^2}{16}\right\rfloor+\left\lfloor\frac{(n-x)^2}{4}\right\rfloor.
\]
Then
\begin{align*}
f(x-2)&=\left\lfloor\frac{(x-2)^2}{16}\right\rfloor+\left\lfloor\frac{(n-x+2)^2}{4}\right\rfloor\\
&=\left\lfloor\frac{x^2-4x+4}{16}\right\rfloor+\left\lfloor\frac{(n-x)^2}{4}+n-x+1\right\rfloor\\
&\geq \left\lfloor\frac{x^2}{16}\right\rfloor -\frac{x-1}{4}-1 +\left\lfloor\frac{(n-x)^2}{4}\right\rfloor+n-x+1\\
&\geq f(x)+n-\frac{5x-1}{4}
\end{align*}
and
\begin{align*}
f(x-2)&\leq \left\lfloor\frac{x^2}{16}\right\rfloor -\frac{x-1}{4}+1 +\left\lfloor\frac{(n-x)^2}{4}\right\rfloor+n-x+1\\
&\leq f(x)+n-\frac{5x-9}{4}.
\end{align*}
Thus, $f(x-2)\geq f(x)$ for $x\leq \frac{4n+1}{5}$ and $f(x-2)\leq f(x)$ for $x\geq \frac{4n+9}{5}$.
Therefore, for even $n$ we have
\[
\hn(K_4,G) \leq \max\{f(2),f(n)\}=\max\left\{\left\lfloor\frac{(n-2)^2}{4}\right\rfloor,\left\lfloor\frac{n^2}{16}\right\rfloor\right\}\leq \left\lfloor\frac{(n-2)^2}{4}\right\rfloor,
\]
where the equality holds if and only if $G$ is isomorphic to $K_2\vee T_2(n-2)$. For
odd $n$ we have
\[
\hn(K_4,G) \leq \max\{f(2),f(n-1)\}=\max\left\{\left\lfloor\frac{(n-2)^2}{4}\right\rfloor,\left\lfloor\frac{(n-1)^2}{16}\right\rfloor\right\}\leq \left\lfloor\frac{(n-2)^2}{4}\right\rfloor,
\]
where the equality holds if and only if $G$ is isomorphic to $K_2\vee T_2(n-2)$.


If $r= 3$, there exists a triangle $xyz$ in $G_2$. Since each edge in $G$ is contained in at least one copy of $K_4$, by Claim \ref{claim5} there exist $ab, cd\in E_1$ such that both $G[\{x,y,a,b\}]$ and $G[\{y,z,c,d\}]$ are copies of $K_4$ in $G$. Since $E_1$ is a matching, we have either $\{a,b\}=\{c,d\}$ or $\{a,b\}\cap \{c,d\}=\emptyset$.  If $\{a,b\}=\{c,d\}$, then $G[\{x,y,z,a,b\}]$ is a copy of $K_5$, a contradiction. If $\{a,b\}\cap \{c,d\}=\emptyset$, then then $G[\{x,y,z,a,b,c,d\}]$ contains  $B_{4,1}$, a contradiction. Thus, we conclude that $r\neq 3$.

If $r=4$, let $V_1,V_2,V_3,V_4$ be four vertex classes of $G_2$. Since $G$ is $B_{4,1}$-free,  at least two of $|V_i|$'s equal one. Without loss of generality, we assume that $|V_3|=|V_4|=1$. Let $V_3=\{u\}$ and $V_4=\{v\}$. Since $uv\notin E_1$, it follows that one of $|V_1|$ and $|V_2|$ equal one. By symmetry let $|V_2|=1$. Then, we have
\[
|\hk_2(G)| = |V_1| = n_2-3.
\]
Moreover, we claim that $\hk_3(G)=\emptyset$. Otherwise, assume that there exists $K\in\mathcal{K}_3(G)$ such that $V(K)\cap V(G_2)=\{x,y\}$. Since $x,y$ also contained in some $K'\in\hk_2(G)$, it follows that $E(K)\cap E(K')=\{xy\}$, which contradicts the fact that $xy\notin E_1$. Since $4\leq n_2\leq n$, we have
\begin{align*}
\hn(K_4,G) &=|\hk_1(G)|+|\hk_2(G)|\\
 &\leq  \left\lfloor\frac{n_1^2}{16}\right\rfloor + n_2-3\\
 &\leq \max\left\{\left\lfloor\frac{(n-4)^2}{16}\right\rfloor +1,n-3\right\}\\
 &\leq \left\lfloor\frac{(n-2)^2}{4}\right\rfloor,
\end{align*}
in which the equality holds if and only if $n=4$ and  $G\cong K_4$ or  $n=5$ and $G \cong K_2\vee T_2(3)$.
Thus, the lemma holds.
\end{proof}

Now we are in position to prove Theorem \ref{thm1}.

\begin{proof}[Proof of Theorem \ref{thm1}]
Let $G$ be a $B_{4,1}$-free graph on $n$ vertices. We show that $G$ can be made $\{B_{4,1},H_1,K_5\}$-free by deleting vertices. Let $H_2$ be a graph on six vertices as shown in Figure \ref{fig:fig-2}.

\begin{figure}[h]
	\centering
\begin{tikzpicture}
\begin{scope}[scale=2]
\filldraw (0,0.2) circle (0.8pt);\draw(0,0.2) node[left] {$b$};
\filldraw (0,0.8) circle (0.8pt);\draw(0,0.8) node[left] {$a$};
\filldraw (0.6,0) circle (0.8pt);\draw(0.6,0) node[below] {$c$};
\filldraw (0.6,1) circle (0.8pt);\draw(0.6,1) node[above] {$e$};
\filldraw (1.6,0.5) circle (0.8pt);\draw(1.6,0.5) node[right] {$f$};
\filldraw (1,0.5) circle (0.8pt);\draw(1,0.5) node[below] {$d$};
\draw (0,0.2) -- (0,0.8) -- (0.6,1) --(0.6,0)--(0,0.2)--(0.6,1)--(1,0.5)--(0.6,0)--(0,0.8);
\draw (0,0.2)--(1,0.5)--(0,0.8);
\draw (0.6,0)--(1.6,0.5)--(0.6,1);
\draw (1.6,0.5)--(1,0.5);
\end{scope}
\end{tikzpicture}
	\caption{A graph $H_2$ on six vertices. }
	\label{fig:fig-2}
\end{figure}
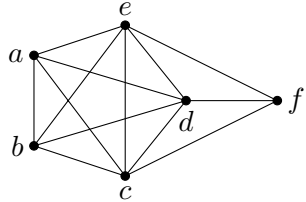

\begin{claim}\label{claim1}
There exists a subset $V'\subset V(G)$ such that $G'=G - V'$ is $\{H_1, H_2\}$-free and $\mathcal{N}(K_4, G')\geq \mathcal{N}(K_4,G) -10 |V'|$.
\end{claim}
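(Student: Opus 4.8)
The plan is to build $V'$ by a greedy vertex‑deletion process that destroys copies of $H_1$ and $H_2$ one at a time, charging at most $10$ lost copies of $K_4$ to each deleted vertex. Concretely: as long as the current graph contains a copy of $H_1$ or $H_2$, I locate inside that copy a single vertex $w$ lying in at most $10$ copies of $K_4$ of the current graph, delete $w$, and put it into $V'$. Deleting $w$ removes exactly $k_4(w)\le 10$ copies of $K_4$ (where $k_4(w)$ denotes the number of copies of $K_4$ through $w$) and kills the chosen copy of $H_1$ or $H_2$, so the process terminates with $G'=G-V'$ being $\{H_1,H_2\}$‑free. Writing $w_1,\dots,w_t$ for the vertices deleted in order and telescoping $\mathcal{N}(K_4,G_i)=\mathcal{N}(K_4,G_{i-1})-k_4(w_i)$, I obtain $\mathcal{N}(K_4,G')\ge \mathcal{N}(K_4,G)-10t=\mathcal{N}(K_4,G)-10|V'|$. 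Thus everything reduces to exhibiting, inside each copy, a vertex of $K_4$‑degree at most $10$.

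The clean case is $H_2$, which consists of a $K_5$ on $\{a,b,c,d,e\}$ together with a vertex $f$ adjacent to the triangle $\{c,d,e\}$, so that $\{c,d,e,f\}$ is a copy of $K_4$. I claim every copy of $K_4$ of $G$ through $f$ uses only vertices of $\{a,b,c,d,e,f\}$, whence $k_4(f)\le\binom{5}{3}=10$. Indeed, let $K=\{f,x,y,z\}$ be a copy of $K_4$ and put $s=|\{x,y,z\}\cap\{a,b,c,d,e\}|$. If $s=0$, then $K$ meets the copy of $K_4$ $\{c,d,e,f\}$ only in $f$, a copy of $B_{4,1}$; if $s=1$ or $s=2$, one can always choose a $4$‑subset $Q$ of $\{a,b,c,d,e\}$ (itself a copy of $K_4$ in $G$) meeting $\{x,y,z\}$ in exactly one vertex, again producing a copy of $B_{4,1}$. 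Hence $s=3$, i.e.\ $\{x,y,z\}\subseteq\{a,b,c,d,e\}$, which proves the confinement; note this argument is insensitive to any extra edges $G$ may put at $f$. So deleting $f$ costs at most $10$ copies of $K_4$.

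For $H_1$ I first exhaust all copies of $H_2$ by the rule above; since deleting vertices cannot create new subgraphs, the graph stays $H_2$‑free thereafter, and I may assume $G$ is $H_2$‑free when attacking $H_1$. Label the copy of $H_1$ so that its four copies of $K_4$ are $\{h,i,j,k\}$, $\{i,j,k,m\}$, $\{i,k,l,m\}$ and $\{j,k,m,n\}$; thus $\{i,j,k,m\}$ is a core copy of $K_4$ and $h$ is the apex adjacent to the triangle $\{i,j,k\}$ but not, inside $H_1$, to $m$. The key point is that $h\not\sim m$ in $G$: if $hm\in E(G)$, then $\{h,i,j,k,m\}$ would be a $K_5$, and together with $l$ (adjacent to the triangle $\{i,k,m\}$) this would yield a copy of $H_2$, contradicting $H_2$‑freeness. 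Consequently no copy of $K_4$ through $h$ contains $m$, and testing a putative $K=\{h,x,y,z\}$ against the three copies of $K_4$ avoiding $h$, namely $\{i,j,k,m\}$, $\{i,k,l,m\}$ and $\{j,k,m,n\}$, via the $B_{4,1}$‑free condition forces $\{x,y,z\}$ into the constant list $\{i,j,k\}$, $\{i,k,n\}$, $\{j,k,l\}$. In particular $k_4(h)\le 10$, so deleting $h$ is cheap.

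The main obstacle is exactly this last step for $H_1$. Unlike $f$ in $H_2$, the apex $h$ is supported by only a single core copy of $K_4$, so its $K_4$‑degree is not automatically bounded: if $h$ were the centre of a large book on an edge $hm$ it could lie in unboundedly many copies of $K_4$ (this is the case $\{x,y,z\}\cap\{i,j,k,m\}=\{k,m\}$, which leaves the outside vertex free). Excluding this is precisely what the preliminary elimination of $H_2$ buys, through the implication $hm\in E(G)\Rightarrow$ a copy of $H_2$. The remaining work is the bookkeeping verification that, once $h\not\sim m$, the constraints from $\{i,j,k,m\}$, $\{i,k,l,m\}$ and $\{j,k,m,n\}$ leave only boundedly many copies of $K_4$ through $h$; this is routine but must be carried out carefully, and it is the reason the forbidden graph $H_1$ is taken on seven vertices with this exact edge set.
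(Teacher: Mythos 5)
Your plan is the paper's proof: greedily delete from each surviving copy of $H_2$ (resp.\ $H_1$) a single vertex of $K_4$-degree at most $10$, handle $H_2$ first, and use $H_2$-freeness to conclude $hm\notin E(G)$ before attacking $H_1$. Your $H_2$ step (every $K_4$ through $f$ is confined to the six vertices, by playing $K$ against $\{c,d,e,f\}$ or a suitable $4$-subset of the $K_5$) is complete and correct, as is the telescoping bookkeeping.

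The gap is precisely the step you defer as ``routine'': it is \emph{not} true that the constraints from the three copies of $K_4$ avoiding $h$ --- $\{i,j,k,m\}$, $\{i,k,l,m\}$, $\{j,k,m,n\}$ --- force $\{x,y,z\}$ into your list. A putative $K=\{h,x,y,z\}$ with $x,y,z$ all outside $\{i,j,k,l,m,n\}$ meets each of those three sets in zero vertices, so none of the three $B_{4,1}$-constraints is violated. You spotted the danger of a book on the edge $hm$ and killed it via $H_2$-freeness, but a book on an edge $hw$ with $w$ outside the copy of $H_1$ is just as dangerous and is untouched by your three constraints; and $B_{4,1}$-freeness by itself does not bound $K_4$-degrees (in the $B_{4,1}$-free graph $K_2\vee T_2(n-2)$, each vertex of the $K_2$ lies on $\Theta(n^2)$ copies of $K_4$). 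So, as stated, $k_4(h)$ is unbounded and the charging scheme collapses. The missing ingredient is to test $K$ against the fourth copy of $K_4$ in $H_1$, the one through $h$: if $\{x,y,z\}\cap\{i,j,k\}=\emptyset$, then $K$ and $\{h,i,j,k\}$ are two copies of $K_4$ sharing exactly the vertex $h$, i.e.\ a $B_{4,1}$. This forces $|\{x,y,z\}\cap\{i,j,k\}|\geq 1$, hence $\geq 2$ by the constraint from $\{i,j,k,m\}$ (using $m\notin K$), and then the constraints from $\{i,k,l,m\}$ and $\{j,k,m,n\}$ do pin $\{x,y,z\}$ down to $\{i,j,k\}$, $\{i,k,n\}$ or $\{j,k,l\}$. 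This is exactly the paper's first case (``if $V(K)\cap B\subset\{h,l,n\}$, then $K$ and $G_1[\{h,i,j,k\}]$ form a copy of $B_{4,1}$''); once it is added, your argument closes and in fact gives $k_4(h)\leq 3\leq 10$.
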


\begin{proof}
 Assume that $G$ contains $H_2$ as a subgraph. Without loss of generality, we further assume that $A=\{a,b,c,d,e,f\}$ is a subset of $V(G)$ such that $G[A]$ contains $H_2$ (see Figure \ref{fig:fig-2}). We first claim that $V(K)\subset A$ for each copy $K$ of $K_4$ containing $f$. Otherwise, if $|V(K)\cap A|=1$, then $K$ and $G[\{c,d,e,f\}]$ are both  copies of $K_4$ that share exactly one vertex $f$, contradicting the fact that $G$ is $B_{4,1}$-free. If $|V(K)\cap A|=2$, by symmetry we may assume that $V(K)\cap A=\{e,f\}$. Then $K$ and $G[\{b,c,d,e\}]$ are both copies of $K_4$ that share exactly one vertex $e$, a contradiction. If $|V(K)\cap A|=3$, by symmetry we assume that $V(K)\cap A=\{d,e,f\}$. Then $K$ and $G[\{a,b,c,e\}]$ are both copies of $K_4$ that share exactly one vertex $e$, a contradiction. Thus, we conclude  $V(K)\subset A$ for each copy $K$ of $K_4$ containing $f$. Now we delete $f$ from $G$ to destroy a copy of $H_2$. By doing this, we loss at most $\binom{|A\setminus\{f\}|}{3}=10$ copies of $K_4$ since they are contained in $A$. We do it iteratively until the resulting graph is $H_2$-free. Let $G_1$ be the resulting graph and  $V_1$ be the set of deleted vertices. Clearly, we have $\mathcal{N}(K_4, G_1)\geq \mathcal{N}(K_4,G)-10|V_1|$.

 Now $G_1$ is $\{B_{4,1},H_2\}$-free. Assume that $G_1$ contains $H_1$ as a subgraph.  Let $B=\{h,i,j,k,l,m,n\}$ be a subset of $V(G_1)$ such that $G_1[B]$ contains $H_1$ (see Figure \ref{fig:fig-3}). It is easy to see that $hm$ is not an edge in $G_1$. Otherwise, $G_1[\{h,i,j,k,m\}]$ is a copy of $K_5$ and $G_1[\{h,i,j,k,m,l\}]$ contains a copy of $H_2$, a contradiction.  Now we claim that $V(K)\subset B\setminus\{m\}$ for each copy $K$ of $K_4$ in $G_1$ containing $h$. Otherwise, we have one of the following cases:
 \begin{itemize}
   \item If $V(K)\cap B\subset \{h,l,n\}$, then $K$ and $G_1[\{h,i,j,k\}]$ form a copy of $B_{4,1}$;
   \item if $|V(K)\cap\{i,j,k\}|=1$, then $K$ and $G_1[\{i,j,k,m\}]$ form a copy of $B_{4,1}$;
   \item if $V(K)\cap B=\{h,i,j\}$ or $\{h,i,k\}$, then $K$ and $G_1[\{j,k,m,n\}]$ form a copy of $B_{4,1}$;

   \item if $V(K)\cap B=\{h,j,k\}$, then $K$ and $G_1[\{i,k,l,m\}]$ form a copy of $B_{4,1}$.
 \end{itemize}
 Since $G_1$ is $B_{4,1}$-free, each of these cases leads to a contradiction. Note that $hm$ is not an edge in $G_1$. We have $V(K)\subset B\setminus\{m\}$ for each copy $K$ of $K_4$ in $G_1$ containing $h$. By deleting $h$ from $G_1$, we destroy a copy of $H_1$ and loss at most $\binom{|B\setminus\{h,m\}|}{3}=10$ copies of $K_4$. We do it iteratively until the resulting graph is $H_1$-free. Let $G_2$ be the resulting graph and  $V_2$ be the set of deleted vertices. Clearly, we have $\mathcal{N}(K_4, G_2)\geq \mathcal{N}(K_4,G_1) -10|V_2|$.

Let $G'=G_2$ and $V'=V_1\cup V_2$. Clearly, $G'$ is  $\{H_1,H_2\}$-free and
$\mathcal{N}(K_4, G')\geq \mathcal{N}(K_4,G)-10|V'|$.
\end{proof}

\begin{claim}\label{claim2}
There exists a subset $V''\subset V(G')$ such that $G''=G'- V''$ is $K_5$-free and $\mathcal{N}(K_4, G'')\geq \mathcal{N}(K_4,G') -|V''|$.
\end{claim}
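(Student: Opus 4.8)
The plan is to exploit the $H_2$-freeness of $G'$ to show that the copies of $K_5$ in $G'$ are completely rigid, so that the naive choice
\[
V''=\bigcup\{V(M)\colon M\text{ is a copy of }K_5\text{ in }G'\}
\]
already works, in fact with equality. The crucial observation is that $H_2$ is a $K_5$ together with one further vertex joined to three vertices of that $K_5$; hence if $M$ spans a $K_5$ and some vertex $x\notin M$ has at least three neighbours in $M$, then $G'[M\cup\{x\}]$ contains a copy of $H_2$, a contradiction. Thus \emph{every vertex outside a $K_5$ has at most two neighbours in it}; in particular $G'$ is $K_6$-free and every copy of $K_5$ is a maximal clique. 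This single step is where the whole claim lives.

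From this I would derive the key dichotomy: for a fixed copy $M$ of $K_5$, every copy $Q$ of $K_4$ in $G'$ is \emph{either contained in $M$ or vertex-disjoint from $M$}. Indeed, if $Q$ met $M$ in exactly three vertices, its fourth vertex would have three neighbours in $M$, giving $H_2$. If $Q$ met $M$ in exactly one or two vertices, fix a common vertex $a$ and choose a $K_4$-face $F$ of $M$ containing $a$ but avoiding the (at most one) other vertex of $Q\cap M$; then $F\cap Q=\{a\}$, and since the vertices of $Q$ lying outside $M$ are distinct from the remaining vertices of $F$, the two copies $F$ and $Q$ of $K_4$ share exactly the single vertex $a$, i.e.\ they form a copy of $B_{4,1}$, a contradiction. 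An immediate consequence is that any two distinct copies of $K_5$ are vertex-disjoint: a common vertex would lie in some $K_4$-face of one copy, and that face would be neither contained in nor disjoint from the other copy, contradicting the dichotomy.

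With this structure the counting is exact. Since the copies of $K_5$ are pairwise vertex-disjoint, $|V''|=5t$, where $t$ is their number. Deleting $V''$ destroys precisely the copies of $K_4$ meeting $V''$, and by the dichotomy every such copy is contained in one of the copies of $K_5$. As each copy of $K_5$ contains exactly five copies of $K_4$ and these copies are disjoint, exactly $5t=|V''|$ copies of $K_4$ are lost, so $\mathcal{N}(K_4,G'')=\mathcal{N}(K_4,G')-|V''|$ (indeed with equality). Finally $G''=G'-V''$ is $K_5$-free, because every copy of $K_5$ in $G'$ had all of its vertices removed and deleting vertices cannot create a new $K_5$; and $G''$ remains $\{B_{4,1},H_1,H_2\}$-free as a subgraph of $G'$, hence in particular $\{B_{4,1},H_1,K_5\}$-free, ready for Lemma~\ref{lem-1}.

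The main obstacle is precisely the first step. Without the identification of $H_2$ as ``$K_5$ plus a degree-three vertex'' and the resulting bound of at most two neighbours, a single $K_5$ would seem to force a net loss of four copies of $K_4$ per deleted vertex, and configurations such as $K_6$ minus an edge (which is $B_{4,1}$- and $H_1$-free) would appear admissible and would break the ``$-|V''|$'' budget outright. It is exactly the $H_2$-freeness, working together with $B_{4,1}$-freeness, that forbids any copy of $K_4$ or any further copy of $K_5$ from attaching to a $K_5$, reducing the task to deleting pairwise-disjoint, $K_4$-isolated copies of $K_5$ at a cost of exactly one copy of $K_4$ per deleted vertex.
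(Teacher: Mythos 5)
Your proposal is correct and follows essentially the same route as the paper: both rest on the two structural facts that any copy of $K_4$ meeting a copy of $K_5$ must be contained in it (via $B_{4,1}$ for intersections of size $1$ or $2$, and $H_2$ for intersections of size $3$) and that distinct copies of $K_5$ are vertex-disjoint, so that deleting the vertex set of each $K_5$ costs at most $\binom{5}{4}=5$ copies of $K_4$ per $5$ deleted vertices. The only difference is cosmetic: the paper deletes one $K_5$ at a time and settles for the inequality, while you delete them all at once and carefully verify the steps the paper dismisses as ``easy to see,'' obtaining equality.
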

\begin{proof}
Since $G'$ is $\{B_{4,1},H_2\}$-free, it is easy to see that each pair of copies of $K_5$ in $G'$ is vertex-disjoint. Let $T$ be a copy of $K_5$ in $G'$. We claim that $V(K)\subset V(T)$ for each copy $K$ of $K_4$ in $G'$ with  $V(T)\cap V(K)\neq \emptyset$. Otherwise, if $|V(K)\cap V(T)|\leq 2$, then it is easy to find a copy of $B_{4,1}$ in $G'$, a contradiction. If $|V(K)\cap V(T)|=3$, then we will find a copy of $H_2$ in $G'$, a contradiction. Thus, we conclude that $V(K)\subset V(T)$ for each copy $K$ of $K_4$ in $G'$ with  $V(T)\cap V(K)\neq \emptyset$. By deleting $V(T)$ from $G'$, we loss at most $\binom{|V(T)|}{4}=5$ copies of $K_4$.  Repeating this process,  finally we arrive at a $K_5$-free graph $G''$. Let $V''$ be the set of deleted vertices. Clearly, we have $G''$ is $K_5$-free and
$\mathcal{N}(K_4,G'')\geq \mathcal{N}(K_4, G')-|V''|$.
\end{proof}

Let $X=V'\cup V''$ and $|X|=x$. Note that $G''$ is $\{B_{4,1},H_1,K_5\}$-free. By Lemma \ref{lem-1} we have
\[
\hn(K_4,G'')\leq \left\lfloor\frac{(n-x-2)^2}{4}\right\rfloor.
\]
By Claims \ref{claim1} and \ref{claim2}, we have
\[
\hn(K_4,G)\leq \left\lfloor\frac{(n-x-2)^2}{4}\right\rfloor +10x =  \left\lfloor\frac{(n-x-2)^2}{4}+10x\right\rfloor.
\]
Since $f(x) = \frac{(n-x-2)^2}{4}+10x$ is a convex function and $0\leq x\leq n$, it follows that
\[
\hn(K_4,G)\leq \max\left\{\left\lfloor\frac{(n-2)^2}{4}\right\rfloor,10n+1\right\}.
\]
Since $n\geq 45$, we have $\mathcal{N}(K_4,G)\leq \lfloor(n-2)^2/4\rfloor$. Moreover, by Lemma \ref{lem-1}, the equality holds if and only if $G$ is isomorphic to $K_2\vee T_2(n-2)$. Thus, the theorem holds.
\end{proof}

\section{The values of $ex(n,K_r,B_{r,1})$ and $ex(n,K_r,B_{r,0})$}

By F\"{u}redi's structure theorem, Frankl and F\"{u}redi \cite{ff85} determined the maximum number of hyperedges in an $r$-uniform hypergraph without two hyperedges sharing exactly $s$ vertices for $r\geq 2s+2$. In this section, we determine $ex(n,K_r,B_{r,1})$ and $ex(n,K_r,B_{r,0})$ by following a similar approach.

First, we recall a result due to Frankl and F\"{u}redi in the intersection closed family (Lemma 5.5 in \cite{ff85}). Let $X$ be a finite set and $2^X$ be the family of all the subsets of $X$. We say that $\mathcal{I} \subset 2^X$ is {\it intersection closed} if for any $I, I'\in \mathcal{I} $, $I\cap I' \in \mathcal{I}$. We say $I\subset X$ is {\it covered} by $\mathcal{I}$ if there exists an $I'\in \mathcal{I}$ such that $I\subset I'$.

\begin{thm}[Frankl and F\"{u}redi \cite{ff85}]\label{thm5}
Let $r$ and $s$ be positive integers with $r\geq 2s+3$ and let $F$ be an $r$-element set.  Suppose that  $\mathcal{I}\subset 2^F\setminus\{F\}$ is an intersection closed family such that $|I|\neq s$ for any $I\in \mathcal{I}$ and all the $(r-s-2)$-element subsets of $F$ are covered by $\mathcal{I}$.  Then there exists an $(s+1)$-element subset $A(F)$ of $F$ such that $$\{I: A(F)\subset I\subsetneq F\}\subset \mathcal{I}.$$
\end{thm}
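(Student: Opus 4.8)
The plan is to reduce the statement to counting the $(r-1)$-element members of $\mathcal I$. Set
\[
D=\{x\in F\colon F\setminus\{x\}\in\mathcal I\}.
\]
First I would observe that $A(F)$ can only be $F\setminus D$: if an $(s+1)$-set $A$ has the asserted property, then for each $x\notin A$ the $(r-1)$-set $F\setminus\{x\}$ satisfies $A\subseteq F\setminus\{x\}\subsetneq F$ and hence lies in $\mathcal I$, so $F\setminus A\subseteq D$. On the other hand, since $\mathcal I$ is intersection closed, for any nonempty $T\subseteq D$ the set $F\setminus T=\bigcap_{x\in T}(F\setminus\{x\})$ lies in $\mathcal I$; taking $|T|=r-s$ would produce a member of size $s$, so $|D|\le r-s-1$. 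Thus it suffices to prove $|D|\ge r-s-1$: then $A(F)=F\setminus D$ has size exactly $s+1$, and every $I$ with $A(F)\subseteq I\subsetneq F$ is of the form $F\setminus T$ with $\emptyset\ne T\subseteq D$ and $|T|\le r-s-1$, whence $|F\setminus T|>s$ and $F\setminus T\in\mathcal I$ by the same intersection, giving the conclusion.

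For the inequality $|D|\ge r-s-1$ I would pass to the complementary family $\mathcal U=\{F\setminus I\colon I\in\mathcal I\}$, which is union closed, omits $\emptyset$ and every set of size $r-s$, and for which the covering hypothesis becomes: every $(s+2)$-subset of $F$ contains a member of $\mathcal U$. Writing $S=\{x\colon\{x\}\in\mathcal U\}$, so that $S=D$, and $t=|S|$, the goal is $t\ge r-s-1$. Let $M^{*}=\bigcup_{A\in\mathcal U}A$ be the (unique) maximal member. No $(s+2)$-set can be disjoint from $M^{*}$, since such a set would contain a member $A\subseteq M^{*}$ disjoint from $M^{*}$; hence $|F\setminus M^{*}|\le s+1$, so $|M^{*}|\ge r-s-1$, and $|M^{*}|\ne r-s$. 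If $|M^{*}|=r-s-1$ the argument closes at once: then $|F\setminus M^{*}|=s+1$, and for each $w\in M^{*}$ the $(s+2)$-set $(F\setminus M^{*})\cup\{w\}$ must contain a member of $\mathcal U$; as the $s+1$ vertices outside $M^{*}$ lie in no member, that member is $\{w\}$, forcing $w\in S$. Hence $S=M^{*}$ and $t=r-s-1$.

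The crux is therefore the case $|M^{*}|\ge r-s+1$, and here I would combine the forbidden size $r-s$ with union closedness via the following gap property: for any member $U\subseteq F\setminus S$ that is a union of minimal members of size at least two, the sets $U\cup S'$ with $S'\subseteq S$ all lie in $\mathcal U$ and have sizes filling the interval $[|U|,|U|+t]$, none of which may equal $r-s$; thus $|U|\le r-s-t-1$ or $|U|\ge r-s+1$. Taking $U_1$ to be the largest member below this gap and adjoining any minimal member $C\not\subseteq U_1$ (one exists because $M^{*}\setminus S\ne U_1$) forces $|U_1\cup C|\ge r-s+1$, i.e.\ $|C\setminus U_1|\ge t+2$. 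Probing with an $(s+2)$-subset of $F\setminus S$ meeting $U_1$ in at most one point, the covering hypothesis produces a minimal member of size at most $s+2$ avoiding all but at most one point of $U_1$, contradicting $|C\setminus U_1|\ge t+2$ as soon as $t\ge s$. The main obstacle I anticipate is the complementary regime of few singletons, $t<s$, where the increment bound $t+2$ degenerates below the ambient $(s+2)$ and this probe no longer bites; closing that regime should require a finer control of the possible sizes of the minimal members of $\mathcal U$, and it is precisely there that the hypothesis $r\ge 2s+3$ must be used in full strength.
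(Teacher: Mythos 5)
First, a point of reference: the paper does not prove this statement at all --- it is quoted verbatim as Lemma 5.5 of Frankl and F\"uredi \cite{ff85} --- so your argument has to stand entirely on its own. Much of it does. The reduction is correct and clean: with $D=\{x\in F\colon F\setminus\{x\}\in\mathcal{I}\}$, intersection-closedness gives $F\setminus T\in\mathcal{I}$ for every nonempty $T\subseteq D$, hence $|D|\le r-s-1$ (a $T$ of size $r-s$ would create a member of size $s$), and if $|D|\ge r-s-1$ then $A(F)=F\setminus D$ works. Passing to the union-closed complement family $\mathcal{U}$, the covering reformulation, the bound $|M^{*}|\ge r-s-1$, the complete resolution of the case $|M^{*}|=r-s-1$, and the ``gap'' property for unions of minimal members together with the probe argument are all sound as far as they go.

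The problem is that they do not go far enough, as you acknowledge. The probe produces a minimal member $C'$ with $|C'|\le s+2$ and $|C'\cap U_1|\le 1$, while the gap property forces $|C'\setminus U_1|\ge t+2$; since $|C'\setminus U_1|\le |C'|\le s+2$, this is a contradiction only when $t+2>s+2$, i.e.\ $t\ge s+1$ (your ``as soon as $t\ge s$'' is off by one). So in the case $|M^{*}|\ge r-s+1$ all you have shown is $t\le s$, and the regime $t\le s$ --- in particular $t=0$, no singleton members at all --- is untouched. That regime is not a boundary nuisance but the core of the lemma: there one must rule out a union-closed family with no singletons and no member of size $r-s$ in which every $(s+2)$-set contains a member of size between $2$ and $s+2$, a genuinely nontrivial covering statement, and exactly where the full strength of $r\ge 2s+3$ must enter; no counting of sizes of the kind you use can close it, since for small $t$ the forbidden interval $[r-s-t,\,r-s]$ is too short to block the build-up of unions. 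A smaller flaw: your justification that a minimal member $C\not\subseteq U_1$ exists ``because $M^{*}\setminus S\ne U_1$'' is invalid, because in a union-closed family an element of $M^{*}$ need not lie in any minimal member (consider $\{\{x\},\{x,y\}\}$), so $M^{*}\setminus S$ can strictly contain the union of all minimal members; fortunately this step is redundant, as the probe itself supplies such a $C$. As it stands, the proposal is a correct reduction plus a partial case analysis, not a proof of the theorem.
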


We use $[n]$ to denote the set $\{1, \ldots , n\}$ and use $\binom{[n]}{r}$ to denote the collection of all $r$-element subsets of $[n]$.  Let $\hf\subset \binom{[n]}{r}$ be a hypergraph. We call $\hf$ $r$-partite  if there exists a partition $[n]=X_1\cup \cdots \cup X_r$ such that $|F\cap X_i|=1$ for all $F\in \hf$ and $i\in \{1,2,\ldots,r\}$.

We adopt the statement of F\"{u}redi's structure theorem given by Frankl and Tokushige in \cite{ft}. For clarity purpose, we recall some definitions from \cite{ft}. Let $\mathcal{F}\subset \binom{[n]}{r}$ be an $r$-partite hypergraph with partition $[n]=X_1\cup\cdots \cup X_r$. For any $F\in \hf$, define the {\it restriction} of $\hf$ on $F$ by $$\mathcal{I}(F, \hf)=\{F'\cap F: F'\in \hf\setminus\{F\}\}.$$
A set of $p$ hyperedges $F_1,\ldots,F_p$ in $\mathcal{F}$ is called a {\it $p$-sunflower} if $F_i\cap F_j = C$ for every  $1\leq i<j\leq p$ and some set $C$.  The set $C$ is called {\it center} of the $p$-sunflower.

F\"{u}redi \cite{furedi} proved the following fundamental result, which was conjectured by Frankl. It roughly says that every $r$-uniform hypergraph $\hf$ contains a large $r$-partite subhypergraph $\hf^*$ satisfying that  $\mathcal{I}(F, \hf^*)$ is isomorphic to $\mathcal{I}(F', \hf^*)$ for any $F,F'\in \hf^*$.

\begin{thm}[F\"{u}redi \cite{furedi}]\label{thm4}
For positive integers $r$ and $p$, there exists a positive constant $c=c(r,p)$ such that every hypergraph $\mathcal{F}\subset \binom{[n]}{r}$ contains an $r$-partite subhypergraph $\mathcal{F}^*$ with partition $[n]=X_1\cup\cdots \cup X_r$ satisfying (i)-(iv).

(i) $|\mathcal{F}^*|\geq c|\mathcal{F}|$.

(ii) For any $F_1,F_2\in \hf^*$, $\mathcal{I}(F_1, \hf^*)$ is isomorphic to $\mathcal{I}(F_2, \hf^*)$.

(iii) For  $F\in \hf^*$, $\mathcal{I}(F, \hf^*)$ is intersection closed.

(iv) For  $F\in \hf^*$ and every $I\in \mathcal{I}(F,\hf^*)$, $I$ is the center of a $p$-sunflower in $\hf^*$.
\end{thm}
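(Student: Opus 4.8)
The plan is to establish Füredi's structure theorem by combining a random partition with the Erd\H{o}s--Rado sunflower lemma, and then refining the hypergraph in a bounded number of rounds so that each round costs only a constant multiplicative factor in the number of edges. First I would secure the $r$-partite structure together with property (i). Colour each vertex of $[n]$ independently and uniformly with one of $r$ colours and keep only the edges receiving all $r$ colours (the ``rainbow'' edges). A fixed $r$-set is rainbow with probability $r!/r^{r}$, so in expectation at least $(r!/r^{r})|\mathcal{F}|$ edges survive; fixing one good colouring yields an $r$-partite $\mathcal{F}_1\subset\mathcal{F}$ with partition $[n]=X_1\cup\cdots\cup X_r$ and $|\mathcal{F}_1|\geq (r!/r^{r})|\mathcal{F}|$. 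The crucial gain is that $r$-partiteness makes all traces canonically comparable: each $F\in\mathcal{F}_1$ meets $X_i$ in a single vertex $v_i(F)$, so any intersection $F'\cap F$ is recorded by the coordinate set $\{i:v_i(F')=v_i(F)\}\subseteq[r]$, and ``$\mathcal{I}(F_1,\cdot)$ isomorphic to $\mathcal{I}(F_2,\cdot)$'' simply means the two families of coordinate sets coincide as subsets of $2^{[r]}$.

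Next I would aim at property (ii) by pigeonhole. To each surviving edge $F$ associate its pattern $P(F)\subseteq 2^{[r]}$, the collection of coordinate sets of its traces in the current family. There are at most $2^{2^{r}}$ possible patterns, so some value $\mathcal{J}$ is attained by a $2^{-2^{r}}$-fraction of the edges, which I retain. The honest difficulty is already visible here: deleting the other edges can only shrink each surviving edge's trace family, so after one pass the patterns need no longer equal $\mathcal{J}$. Thus a single pigeonhole step cannot by itself deliver (ii), (iii) and (iv) simultaneously, and one is forced to iterate.

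The engine for (iii) and (iv) is the sunflower lemma: any family of more than $r!\,(p-1)^{r}$ distinct sets, each of size at most $r$, contains a $p$-sunflower. I would set up a cleaning loop that, as long as some trace $I\in\mathcal{I}(F,\cdot)$ in a positive fraction of edges fails to be the centre of a $p$-sunflower (equivalently, is covered by fewer than $p$ edges) or fails intersection-closedness, removes the offending edges and then re-applies the pattern pigeonhole. Since every trace is a subset of an $r$-set and there are only boundedly many patterns and coordinate sets, I would argue that the loop terminates after a number of rounds bounded in terms of $r$ and $p$ alone; as each round keeps at least a fixed fraction (from the random partition, the pattern pigeonhole, and the sunflower extraction), the composition of these constant factors produces the desired $c=c(r,p)$, and the surviving family satisfies (i)--(iv) by construction.

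The main obstacle, as flagged above, is the coupling of the four requirements: enforcing (ii) by deletion can destroy the sunflowers needed for (iv), while extracting sunflowers for (iv) changes the patterns and can re-break (ii). Making the loop provably terminate---that is, exhibiting a monovariant, such as a lexicographic vector recording for each coordinate set whether it is already ``saturated'' as a $p$-sunflower centre, that strictly improves each round while only boundedly many improvements are possible---is the technical heart of the argument, and is precisely where Füredi's careful bookkeeping is required.
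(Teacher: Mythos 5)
The paper does not prove this statement at all: it is quoted verbatim as F\"{u}redi's structure theorem from \cite{furedi}, in the formulation of Frankl and Tokushige \cite{ft}, and is used as a black box. So the only question is whether your sketch stands on its own, and it does not. You have correctly assembled the standard opening moves (random $r$-colouring to get $r$-partiteness and the factor $r!/r^r$ for (i), pigeonhole on trace patterns for (ii), the Erd\H{o}s--Rado sunflower lemma for (iv)), but you then state explicitly that making the cleaning loop terminate ``is the technical heart of the argument, and is precisely where F\"{u}redi's careful bookkeeping is required.'' That heart is exactly what a proof of this theorem consists of; deferring it means the circular interaction you yourself identify --- deletions for (ii) destroy sunflowers for (iv), and extracting sunflowers re-breaks (ii) --- is never resolved. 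Moreover, nothing in the sketch shows that each round of the loop retains a \emph{fixed fraction} of the edges: deleting all edges that ``fail'' a condition could a priori delete almost the whole family, and no counting argument is offered to rule this out.

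There is also a concrete mathematical error inside the sketch: the parenthetical claim that $I$ failing to be the centre of a $p$-sunflower is ``equivalently'' $I$ being covered by fewer than $p$ edges. This equivalence is false. If $I$ is contained in many edges, the sunflower lemma applied to those edges produces a sunflower whose core \emph{contains} $I$ but may properly contain it; conversely $p$ edges through $I$ need not pairwise intersect exactly in $I$. Forcing the cores to equal the prescribed traces is intertwined with intersection-closedness (iii) and with the uniformity of patterns (ii), which is why F\"{u}redi's proof processes the position sets $D\subseteq[r]$ in a carefully chosen order with a genuine monovariant, rather than running an unstructured delete-and-repigeonhole loop. As written, your proposal is an accurate map of where the difficulties lie, but it does not cross any of them.
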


We need the following two results. The first one is due to Deza, Erd\H{o}s and Frankl \cite{deza}.

\begin{lem}[Deza, Erd\H{o}s and Frankl \cite{deza}]\label{lem-2}
Suppose that $\{E_1,\ldots,E_{r+1}\}$ and $\{F_1,\ldots,F_{r+1}\}$ are both $(r+1)$-sunflowers in an $r$-uniform hypergraphs with center $C_1$ and $C_2$, respectively. Then there exist $i$ and $j$ such that $F_i\cap F_j=C_1\cap C_2$.
\end{lem}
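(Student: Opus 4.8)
The plan is a short two-step pigeonhole argument, once the conclusion is read correctly: as stated it should be $E_i\cap F_j=C_1\cap C_2$, since any two distinct petals of the \emph{same} sunflower already satisfy $F_i\cap F_j=C_2$ and give nothing new. With this reading, the key initial observation is that the inclusion $C_1\cap C_2\subseteq E_i\cap F_j$ holds automatically for \emph{every} pair $i,j$, because $C_1\subseteq E_i$ and $C_2\subseteq F_j$. So the whole task reduces to finding one petal from each sunflower for which the reverse inclusion holds too, i.e. for which $E_i\cap F_j$ contains no element outside $C_1\cap C_2$.

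Before the pigeonhole I would record the structural facts. Since the hypergraph is $r$-uniform, $|E_i|=|F_j|=r$ for all $i,j$. Writing $P_i=E_i\setminus C_1$ and $Q_j=F_j\setminus C_2$ for the petals, the sunflower condition $E_i\cap E_{i'}=C_1$ forces the $P_i$ to be pairwise disjoint, and likewise the $Q_j$ pairwise disjoint. Because the $r+1$ sets $E_i$ are distinct and each has size $r$, we must have $|C_1|\le r-1$ (otherwise $E_i=C_1$ for all $i$), so every petal $P_i$ is nonempty; the same applies to $C_2$ and the $Q_j$. These cardinality bounds are exactly what make the two pigeonhole counts below fall strictly under $r+1$, and verifying them is the only point that needs care.

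The first step selects $j$. Each element $x\in C_1\setminus C_2$ lies in at most one $F_j$: in the sunflower $\{F_1,\ldots,F_{r+1}\}$ every non-core element belongs to at most one petal, and $x\notin C_2$. Hence the number of indices $j$ with $F_j\cap(C_1\setminus C_2)\neq\emptyset$ is at most $|C_1\setminus C_2|=|C_1|-|C_1\cap C_2|\le r-1<r+1$. Therefore some $F_{j_0}$ is disjoint from $C_1\setminus C_2$, and for this $F_{j_0}$,
\[
C_1\cap F_{j_0}=\bigl(C_1\cap C_2\cap F_{j_0}\bigr)\cup\bigl((C_1\setminus C_2)\cap F_{j_0}\bigr)=(C_1\cap C_2)\cup\emptyset=C_1\cap C_2,
\]
using $C_1\cap C_2\subseteq C_2\subseteq F_{j_0}$.

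The second step fixes this $F_{j_0}$ and selects $i$. The $r+1$ petals $P_1,\ldots,P_{r+1}$ are pairwise disjoint, so the sets $P_i\cap F_{j_0}$ are disjoint subsets of $F_{j_0}$, and $F_{j_0}$ has only $r$ elements; thus at most $r$ of these intersections are nonempty, and some index $i_0$ gives $P_{i_0}\cap F_{j_0}=\emptyset$. Combining the two steps,
\[
E_{i_0}\cap F_{j_0}=(C_1\cap F_{j_0})\cup(P_{i_0}\cap F_{j_0})=(C_1\cap C_2)\cup\emptyset=C_1\cap C_2,
\]
which is the desired conclusion. I do not expect a genuine obstacle here: the entire content is organizing the two pigeonhole counts so that both ($\le r-1$ in the first step, $\le r$ in the second) are strictly less than the number $r+1$ of available petals, which is precisely why the hypothesis uses $(r+1)$-sunflowers.
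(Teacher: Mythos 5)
Your proof is correct, but note first that the paper itself offers no proof of this lemma to compare against: it is quoted as a known result of Deza, Erd\H{o}s and Frankl \cite{deza}. You also rightly flagged the misprint in the statement: the conclusion must be $E_i\cap F_j=C_1\cap C_2$, since for $i\neq j$ one has $F_i\cap F_j=C_2$ identically, and indeed the paper applies the lemma in exactly this corrected form (in Claim \ref{claim7}, where one hyperedge is drawn from each of two sunflowers, and again in the proof of Theorem \ref{thm8}). Your two-step pigeonhole argument is the standard one and is complete: the inclusion $C_1\cap C_2\subseteq E_i\cap F_j$ is automatic; a petal-count in the $F$-sunflower yields an $F_{j_0}$ disjoint from $C_1\setminus C_2$, because each element outside $C_2$ lies in at most one $F_j$ and $|C_1\setminus C_2|\leq r<r+1$; and the $r+1$ pairwise disjoint petals $P_i=E_i\setminus C_1$ cannot all meet the $r$-element set $F_{j_0}$, yielding $E_{i_0}\cap F_{j_0}=(C_1\cap F_{j_0})\cup(P_{i_0}\cap F_{j_0})=C_1\cap C_2$. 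One minor simplification: the preliminary facts $|C_1|\leq r-1$ and nonemptiness of the petals are never actually used; the weaker bound $|C_1|\leq r$, immediate from $C_1\subseteq E_1$ and $r$-uniformity, already puts both pigeonhole counts strictly below $r+1$, so that paragraph could be dropped without loss.
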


The second one is due to Zykov \cite{zykov}. He showed that the Tur\'{a}n graph maximizes the number of $s$-cliques in $n$-vertex $K_{t+1}$-free graphs for $s\leq t$.

\begin{thm}[Zykov \cite{zykov}]\label{thm7}
For $s\leq t$,
$$ex(n, K_s, K_{t+1})=\mathcal{N}(K_s, T_t(n)),$$
and $T_{t}(n)$ is the unique graph attaining the maximum number of copies of $K_s$.
\end{thm}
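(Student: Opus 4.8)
The plan is to prove this classical result by Zykov's symmetrization, exactly the tool highlighted in the paper. Fix $s\le t$ and let $G$ be a $K_{t+1}$-free graph on $n$ vertices maximizing $\mathcal{N}(K_s,G)$; I may assume $s\ge 2$, since the case $s=1$ is trivial. First I would reduce to the situation where every edge of $G$ lies in at least one copy of $K_s$, because deleting an edge contained in no $K_s$ preserves both $K_{t+1}$-freeness and the $K_s$-count. For a vertex $v$ write $k_s(v)$ for the number of copies of $K_s$ through $v$, and $k_s(u,w)$ for the number through both $u$ and $w$. Recall the symmetrization $C_{uv}(G)$ (delete the edges at $u$, then join $u$ to $N(v)$) used earlier in the paper; the two facts I need are: (a) if $uv\notin E(G)$ then $C_{uv}(G)$ is again $K_{t+1}$-free, because any newly created $K_{t+1}$ must contain $u$ but not $v$, and replacing $u$ by $v$ would produce a $K_{t+1}$ in $G$; and (b) $\mathcal{N}(K_s,C_{uv}(G))=\mathcal{N}(K_s,G)-k_s(u)+k_s(v)$, since $u$ becomes a clone of $v$ while $uv$ remains a non-edge.

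Next I would show that non-adjacency is an equivalence relation on $V(G)$; reflexivity and symmetry are immediate, so the content is transitivity. Suppose $uv,vw\notin E(G)$ but $uw\in E(G)$. If $k_s(v)<k_s(u)$ (or symmetrically $k_s(v)<k_s(w)$), then $C_{vu}(G)$ is $K_{t+1}$-free and has strictly more copies of $K_s$ by (a) and (b), contradicting maximality. Otherwise $k_s(v)\ge k_s(u)$ and $k_s(v)\ge k_s(w)$, and cloning both $u$ and $w$ onto $v$, i.e.\ forming $G^*=C_{uv}(C_{wv}(G))$, yields a $K_{t+1}$-free graph with
$$\mathcal{N}(K_s,G^*)=\mathcal{N}(K_s,G)-k_s(u)-k_s(w)+k_s(u,w)+2k_s(v)\ge \mathcal{N}(K_s,G)+k_s(u,w).$$
Since $uw\in E(G)$ lies in some copy of $K_s$, we have $k_s(u,w)\ge 1$, again contradicting maximality. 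Hence non-adjacency is transitive and $G$ is complete multipartite; as $G$ is $K_{t+1}$-free it has $r\le t$ parts.

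Finally I would optimize over complete $r$-partite graphs with $r\le t$ and part sizes $n_1,\ldots,n_r$ summing to $n$, where $\mathcal{N}(K_s,G)=e_s(n_1,\ldots,n_r)$ is the $s$-th elementary symmetric polynomial. This is a two-step convexity argument. First, for fixed $r$ a smoothing step shows $e_s$ strictly increases when a unit of mass is moved from a part of size $\ge n_j+2$ to a smaller part $n_j$: writing $e_s=n_in_j\,e_{s-2}(\text{rest})+(n_i+n_j)\,e_{s-1}(\text{rest})+e_s(\text{rest})$, only the first term changes under this move, and $n_in_j$ is maximized when the sizes are balanced, so all part sizes should differ by at most one. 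Second, splitting a largest part (of size $\ge 2$) of a balanced $r$-partite graph with $r<t$ into two parts keeps the graph $K_{t+1}$-free and strictly increases $e_s$ by creating new copies of $K_s$, so the number of parts should be exactly $t$. Together these force the unique maximizer to be the balanced $t$-partite graph $T_t(n)$.

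The main obstacle, I expect, is the clean bookkeeping in the double symmetrization of the second transitivity case---the inclusion--exclusion term $k_s(u,w)$ that makes the inequality strict---together with carrying out the final optimization so that it simultaneously balances the part sizes and pins down the number of parts, and then upgrading the whole argument to \emph{uniqueness}. For the last point I would observe that $T_t(n)$ is edge-maximal among $K_{t+1}$-free graphs (adding any missing edge, which must lie inside a part, completes a $K_{t+1}$), so no extremal graph can carry extra edges beyond the complete $t$-partite structure forced above.
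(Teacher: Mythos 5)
The paper does not actually prove this statement---it is quoted as a classical theorem of Zykov and used as a black box---so there is no in-paper proof to compare against; your argument is correct and is essentially the same symmetrization technique the paper itself deploys in Section 2 (Claim 4 inside the proof of Lemma 2.1): the edge-criticality reduction that guarantees $k_s(u,w)\ge 1$, the two-case cloning argument showing non-adjacency is an equivalence relation, and the final optimization over complete multipartite graphs are all sound, as is the uniqueness wrap-up via edge-maximality of $T_t(n)$. The only points left implicit are routine: the strictness of the smoothing and splitting moves needs $e_{s-2}(\text{remaining parts})>0$, which holds at any configuration with a positive $K_s$-count, and the degenerate cases $n<s$ or $s=1$, where the uniqueness clause is vacuous or fails for trivial reasons.
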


Let $\hf\subset \binom{[n]}{r}$ be a hypergraph and  $x\in [n]$. Define
\[
N_{\hf}(x) = \{T\colon T\cup\{x\}\in \hf\}.
\]
The degree of $x$ in $\hf$, denoted by $\deg_{\mathcal{F}}(x)$, is the cardinality of $N_{\hf}(x)$.

Now we are ready to prove Theorem \ref{thm3}.

\begin{proof}[Proof of Theorem \ref{thm3}]
Let $G$ be a $B_{r,1}$-free graph on $[n]$ with the maximum number of copies of $K_r$. Since $K_2\vee T_{r-2}(n-2)$ is $B_{r,1}$-free, we may assume that $\mathcal{N}(K_r, G)\geq \mathcal{N}(K_{r-2}, T_{r-2}(n-2))$.

Let
\[
\hf = \left\{F\in \binom{[n]}{r}\colon G[F] \mbox{ is a clique}\right\}.
\]
Clearly, $|F_1\cap F_2|\neq 1$ for any $F_1, F_2\in \mathcal{F}$ since $G$ is $B_{r,1}$-free. Now we apply Theorem \ref{thm4} with $p=r+1$ to $\mathcal{F}$ and obtain $\mathcal{F}_1=\mathcal{F}^*$ satisfying (i)-(iv). Then apply Theorem \ref{thm4} to $\mathcal{F}-\mathcal{F}_1$ to obtain $\mathcal{F}_2=(\mathcal{F}-\mathcal{F}_1)^*$, in the $i$-th step we obtain $\mathcal{F}_i=(\mathcal{F}-(\mathcal{F}_1\cup\cdots\cup \mathcal{F}_{i-1}))^*$. We stop if there is an $F_0\in \hf_i$ and an $(r-3)$-element subset $B_0$ of $F_0$ such that $B_0$ is not covered by $\mathcal{I}(F_0,\hf_i)$.  Suppose that the procedure stops in the $m$-th step. By Theorem \ref{thm4} (ii), for every $F\in \hf_m$ there is an $(r-3)$-element subset $B$ of $F$ such that $B$ is not covered by $\mathcal{I}(F,\hf_m)$.

\begin{claim}\label{claim6}
$|\mathcal{F}-(\mathcal{F}_1\cup\cdots\cup \mathcal{F}_{m-1})|\leq c'{n\choose r-3}$ for some $c'>0$.
\end{claim}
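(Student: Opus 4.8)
The plan is to exploit the stopping rule of the extraction procedure together with part (i) of F\"uredi's structure theorem (Theorem \ref{thm4}). Write $\mathcal{G}=\hf-(\hf_1\cup\cdots\cup \hf_{m-1})$, so that $\hf_m=\mathcal{G}^*$ is precisely the $r$-partite subhypergraph produced by applying Theorem \ref{thm4} (with $p=r+1$) to $\mathcal{G}$. Since Theorem \ref{thm4}(i) guarantees $|\hf_m|\geq c\,|\mathcal{G}|$ for the constant $c=c(r,r+1)>0$, it suffices to bound $|\hf_m|$ by $\binom{n}{r-3}$ and then set $c'=1/c$.

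First I would reinterpret the condition ``$B$ is not covered by $\mathcal{I}(F,\hf_m)$'' in purely set-theoretic terms. By definition $\mathcal{I}(F,\hf_m)=\{F'\cap F\colon F'\in \hf_m\setminus\{F\}\}$, so an $(r-3)$-subset $B\subseteq F$ fails to be covered exactly when there is no $F'\in \hf_m\setminus\{F\}$ with $B\subseteq F'\cap F$; since $B\subseteq F$ already, this is equivalent to $B$ being contained in no hyperedge of $\hf_m$ other than $F$. In other words, $B$ is a \emph{private} $(r-3)$-subset of $F$. By the stopping rule at step $m$ and the isomorphism of restrictions given by Theorem \ref{thm4}(ii), every $F\in \hf_m$ possesses such a private $(r-3)$-subset, which I denote $B_F$.

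The key step is to observe that the assignment $F\mapsto B_F$ is injective. Indeed, if $B_{F_1}=B_{F_2}=B$ for distinct $F_1,F_2\in \hf_m$, then $B\subseteq F_2$ with $F_2\neq F_1$ would contradict the fact that $B=B_{F_1}$ lies in no hyperedge of $\hf_m$ other than $F_1$. Consequently the number of hyperedges of $\hf_m$ is at most the number of $(r-3)$-element subsets of $[n]$, giving $|\hf_m|\leq \binom{n}{r-3}$.

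Combining $|\hf_m|\geq c\,|\mathcal{G}|$ with $|\hf_m|\leq\binom{n}{r-3}$ yields $|\mathcal{G}|\leq \tfrac1c\binom{n}{r-3}$, which is the claim with $c'=1/c$. I do not anticipate a genuine obstacle here: once the stopping condition is read correctly as supplying a private $(r-3)$-set for each surviving hyperedge, injectivity and the counting bound are immediate. The only point demanding care is bookkeeping — namely that $\hf_m$ is extracted from the \emph{residual} family $\mathcal{G}$, so that the factor $c$ from Theorem \ref{thm4}(i) is exactly what converts the bound on $|\hf_m|$ into the desired bound on the full residual family $|\mathcal{G}|=|\hf-(\hf_1\cup\cdots\cup\hf_{m-1})|$.
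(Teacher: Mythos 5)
Your proposal is correct and follows essentially the same route as the paper: read the stopping condition (via Theorem \ref{thm4}(ii)) as giving every $F\in\hf_m$ a private $(r-3)$-subset, conclude $|\hf_m|\leq\binom{n}{r-3}$ by injectivity of $F\mapsto B_F$, and then convert this into a bound on the residual family using Theorem \ref{thm4}(i) with $c'=c^{-1}$. The paper's proof is identical, merely stating the counting step without spelling out the injectivity argument you made explicit.
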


\begin{proof}
For any $F\in \hf_m$, let $B$ be an $(r-3)$-element subset of $F$ that is not covered by $\mathcal{I}(F,\mathcal{F}_m)$. Then it follows that $B\nsubseteq E\cap F$ for any $E\in \mathcal{F}_m\setminus \{F\}$, that is, $F$ is the only hyperedge in $\hf_m$ that contains $B$. Thus $|\mathcal{F}_m|\leq {n\choose r-3}$. Now by Theorem \ref{thm4} (i),
\begin{align*}
|\mathcal{F}-(\mathcal{F}_1\cup\cdots\cup \mathcal{F}_{m-1})|\leq c^{-1}|\mathcal{F}_m|\leq c'{n\choose r-3}.
\end{align*}
\end{proof}

Let $i\in \{1,2,\ldots,m-1\}$ and $F\in \hf_i$. By Theorem \ref{thm4} (iii), $\mathcal{I}(F,\hf_i)$ is intersection closed. Since $|F_1\cap F_2|\neq 1$ for any $F_1,F_2\in \hf_i$, $|I|\neq 1$ for each $I\in \mathcal{I}(F,\hf_i)$. Now apply Theorem \ref{thm5} with $s=1$ to $\mathcal{I}(F,\hf_i)$, we obtain a $2$-element subset $A(F)$ of $F$ such that $$\{I: A(F)\subset I \subsetneq F\}\subset \mathcal{I}(F, \mathcal{F}_i).$$ Let $A_1, A_2, \ldots, A_h$ be the list of 2-element sets for which $A_j=A(F)$ for some $F\in \mathcal{F}_1\cup \cdots \cup \mathcal{F}_{m-1}$. For $j=1,\ldots,h$, let $$\mathcal{H}_j=\{F\in \mathcal{F}_1\cup \cdots \cup \mathcal{F}_{m-1}: A(F)=A_j\}$$
and
\[
V(\mathcal{H}_j) =\bigcup_{F\in \mathcal{H}_j} F.
\]

\begin{claim}\label{claim7}
$V(\mathcal{H}_1), \ldots, V(\mathcal{H}_h)$ are pairwise disjoint.
\end{claim}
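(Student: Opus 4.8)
The plan is to argue by contradiction: suppose some vertex $v$ lies in $V(\mathcal{H}_j)\cap V(\mathcal{H}_{j'})$ with $j\neq j'$, witnessed by cliques $F\in\mathcal{H}_j$ and $F'\in\mathcal{H}_{j'}$ both containing $v$. The engine of the whole argument will be a single pigeonhole lemma, which I will call the \emph{no-singleton lemma}: if a set $A$ is the center of an $(r+1)$-sunflower $\{P_1,\dots,P_{r+1}\}$ in $\mathcal{F}$, then no clique $K\in\mathcal{F}$ can satisfy $|K\cap A|=1$. To see this, write $K\cap A=\{w\}$; since $w\in A\subseteq P_c$ for every $c$, the intersection $K\cap P_c$ is non-empty, so $B_{r,1}$-freeness forces $|K\cap P_c|\geq 2$ and yields a vertex $\xi_c\in (K\cap P_c)\setminus\{w\}$. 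Because $K\cap A=\{w\}$, each $\xi_c$ avoids $A$ and hence lies in $P_c\setminus A$; as the sets $P_c\setminus A$ are pairwise disjoint, the $r+1$ vertices $\xi_1,\dots,\xi_{r+1}$ are distinct and all lie in $K$, contradicting $|K|=r$. This lemma is applicable because, by the conclusion of Theorem \ref{thm5} together with intersection-closedness (Theorem \ref{thm4}(iii)) and the sunflower property (Theorem \ref{thm4}(iv)), \emph{every} set $I$ with $A(F)\subseteq I\subsetneq F$ is the center of an $(r+1)$-sunflower in $\mathcal{F}$; in particular so is $A(F)$ itself, using $r\geq 5$ to realize $A(F)$ as an intersection of two of its proper extensions.

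First I would record that distinct kernels are disjoint, i.e. $A_j\cap A_{j'}=\emptyset$. Applying Lemma \ref{lem-2} to the two $(r+1)$-sunflowers centred at $A_j$ and $A_{j'}$ produces a petal from each whose intersection equals $A_j\cap A_{j'}$; since these petals are cliques, $B_{r,1}$-freeness gives $|A_j\cap A_{j'}|\neq 1$, and as $A_j\neq A_{j'}$ are $2$-sets this forces $A_j\cap A_{j'}=\emptyset$. Next, taking $K=F$ and $A=A_{j'}$ in the no-singleton lemma gives the dichotomy $A_{j'}\subseteq F$ or $A_{j'}\cap F=\emptyset$, and symmetrically $A_j\subseteq F'$ or $A_j\cap F'=\emptyset$.

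Now I would split into cases according to this dichotomy. If $A_j\cap F'=\emptyset$ then, since $v\in F'$, the $3$-set $A_j\cup\{v\}$ (a legitimate sunflower centre, as $A_j\subseteq A_j\cup\{v\}\subsetneq F$) meets $F'$ in exactly $\{v\}$, so the no-singleton lemma applied to $A=A_j\cup\{v\}$ and $K=F'$ gives a contradiction; the case $A_{j'}\cap F=\emptyset$ is symmetric. The remaining, and genuinely harder, case is $A_j\subseteq F'$ and $A_{j'}\subseteq F$ simultaneously, where every available sunflower centre meets the opposite clique in at least two vertices, so the lemma cannot be applied directly. The main obstacle is to manufacture, in this degenerate case, a clique that meets a kernel in a single vertex. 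My plan is to fix $w\in A_{j'}$ (so $w\in F\setminus A_j$) and use realizability of $I=A_j\cup\{w\}$ in $\mathcal{I}(F,\mathcal{F}_i)$ to obtain a clique $F''\in\mathcal{F}_i$ with $F''\cap F=A_j\cup\{w\}$; since $A_{j'}\subseteq F$ and $A_j\cap A_{j'}=\emptyset$, this forces $A_{j'}\cap F''=\{w\}$, and applying the no-singleton lemma to $A=A_{j'}$ and $K=F''$ closes this case. I expect verifying this last reduction---checking that $F''$ indeed meets $A_{j'}$ in exactly one vertex, and that the size bounds $3\leq|A_j\cup\{w\}|<r$ hold (where $r\geq 5$ is used)---to be the step requiring the most care.
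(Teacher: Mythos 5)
Your proof is correct and every step checks out: the ``no-singleton lemma'' is a valid pigeonhole (if $|K\cap A|=1$, each of the $r+1$ petals contributes a distinct vertex of $K$ outside the center, exceeding $|K|=r$); the sets $I$ with $A(F)\subseteq I\subsetneq F$ are indeed centers of $(r+1)$-sunflowers in $\mathcal{F}$ by Theorem \ref{thm5} combined with Theorem \ref{thm4}(iv); and in your hardest case the realizability of $A_j\cup\{w\}$ as a member of $\mathcal{I}(F,\mathcal{F}_i)$ correctly yields $F''\in\mathcal{F}_i$ with $F''\cap F=A_j\cup\{w\}$, hence $|F''\cap A_{j'}|=1$. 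However, your route is genuinely different from, and longer than, the paper's. The paper needs no dichotomy and no auxiliary lemma: having fixed $F_1\in\mathcal{H}_1$, $F_2\in\mathcal{H}_2$ with $F_1\cap F_2\neq\emptyset$, it directly exhibits two sunflower centers meeting in exactly one vertex --- namely $C_1=A_1$, $C_2=A_2$ when $|A_1\cap A_2|=1$, and $C_1=A_1\cup\{x\}$, $C_2=A_2\cup\{x\}$ with $x\in F_1\cap F_2$ when $A_1\cap A_2=\emptyset$ --- and then a single application of Lemma \ref{lem-2} produces two cliques of $\mathcal{F}$ sharing exactly one vertex, contradicting $B_{r,1}$-freeness. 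In other words, the two-sunflower strength of Lemma \ref{lem-2} absorbs all three of your cases at once: even in your degenerate case ($A_j\subseteq F'$ and $A_{j'}\subseteq F$), once kernel disjointness is known the centers $A_j\cup\{v\}$ and $A_{j'}\cup\{v\}$ meet exactly in $\{v\}$, so Lemma \ref{lem-2} finishes immediately and the $F''$ detour is unnecessary. What your version buys is locality and reusability: your no-singleton lemma is elementary and self-contained, and it is essentially the same pigeonhole that the paper re-derives inline twice later (in showing $V(\mathcal{K})\cap V(\mathcal{B})=\emptyset$ and in the proof of Claim \ref{claim8}), so isolating it as a named lemma is arguably cleaner exposition; the cost is the three-way case analysis and an extra appeal to the structure of $\mathcal{I}(F,\mathcal{F}_i)$ that the paper's streamlined argument avoids.
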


\begin{proof}
Suppose for contradiction that $|V(\mathcal{H}_1)\cap V(\mathcal{H}_2)|\geq 1$. It follows that there exist $F_1\in \mathcal{H}_1$ and $F_2\in \mathcal{H}_2$ such that $|F_1\cap F_2|\geq 1$. Then we can find two sets $C_1$ and $C_2$ satisfying $A_1\subset C_1 \subsetneq F_1$, $A_2\subset C_2 \subsetneq F_2$ and $|C_1\cap C_2|=1$ in the following way. If $|A_1\cap A_2|=1$, then let $C_1=A_1$ and $C_2=A_2$. If $A_1\cap A_2=\emptyset$, then let  $C_1=A_1\cup \{x\}$ and $C_2=A_2\cup \{x\}$ for some $x\in F_1\cap F_2$.

Since $F_1\in \mathcal{F}_i$ for some $i\in \{1, \ldots, m-1\}$ and $$C_1\in \{I: A_1\subset I \subsetneq F_1\}\subset \mathcal{I}(F_1, \mathcal{F}_i),$$
by Theorem \ref{thm4} (iv) $C_1$ is the center of an $(r+1)$-sunflower in $\mathcal{F}_i$. Therefore $C_1$ is the center of an $(r+1)$-sunflower in $\mathcal{F}$. Similarly, $C_2$ is also the center of an $(r+1)$-sunflower in $\mathcal{F}$. By Lemma \ref{lem-2}, there exist $F_1', F_2'\in \mathcal{F}$ satisfying $|F_1'\cap F_2'|=|C_1\cap C_2|=1$, which contradicts the fact that $|F_1\cap F_2|\neq 1$ for any $F_1, F_2\in \mathcal{F}$. Thus the claim holds.
\end{proof}

Assume that $A_i=\{u_i, v_i\}$ for $i=1,\ldots, h$. Let $G_i$ be the graph on the vertex set $V(\mathcal{H}_i)$
with the edge set
\[
E(G_i) =\left\{uv\colon \{u,v\}\subset F\in \mathcal{H}_i\right\}.
\]
Obviously, $G_i$ is a subgraph of $G$ and $vu_i, vv_i, u_iv_i \in E(G_i)$ for each $v\in V(\mathcal{H}_i)\setminus A_i$.

\begin{claim}\label{claim8}
$G_i- A_i$ is $K_{r-1}$-free for  $i=1,\ldots, h$.
\end{claim}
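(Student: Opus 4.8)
The plan is to argue by contradiction. Suppose $G_i-A_i$ contains a copy of $K_{r-1}$ on a vertex set $W$ with $|W|=r-1$. Since $W\subset V(\mathcal{H}_i)\setminus A_i$ and $u_i,v_i$ are adjacent in $G_i$ to every vertex of $V(\mathcal{H}_i)\setminus A_i$, the set $K=W\cup\{u_i\}$ spans a copy of $K_r$ in $G$ that contains $u_i$ but not $v_i$. The goal is then to exhibit a second copy of $K_r$ in $G$ meeting $K$ in exactly the single vertex $u_i$, which produces a $B_{r,1}$ and contradicts the hypothesis on $G$.

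To find such a clique I would exploit the sunflower structure guaranteed by Theorem \ref{thm4}(iv). Fix $F\in\mathcal{H}_i$ with $A(F)=A_i$, and say $F\in\mathcal{F}_\ell$ for some $\ell\le m-1$. Since $A_i=A(F)$ lies in $\{I\colon A(F)\subset I\subsetneq F\}\subset\mathcal{I}(F,\mathcal{F}_\ell)$, part (iv) of Theorem \ref{thm4} (applied with $p=r+1$) says that $A_i$ is the center of an $(r+1)$-sunflower in $\mathcal{F}_\ell$, hence in $\mathcal{F}$. Thus there are $r+1$ copies of $K_r$, say $F_1^\ast,\dots,F_{r+1}^\ast$, each containing $A_i=\{u_i,v_i\}$, whose petals $P_j=F_j^\ast\setminus A_i$ are pairwise disjoint $(r-2)$-element sets.

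The key counting step is a pigeonhole argument on these petals. The $r+1$ petals are pairwise disjoint and each avoids $A_i$, while $W$ has only $r-1$ vertices; hence $W$ meets at most $r-1$ of them, and at least two petals are disjoint from $W$. Choose one such $F_j^\ast$ with $P_j\cap W=\emptyset$. Then $F_j^\ast\cap K=\{u_i\}$: indeed $v_i\in F_j^\ast$ but $v_i\notin K$, the petal $P_j$ avoids both $W$ and $u_i$, and $u_i$ lies in both sets. Since $F_j^\ast$ and $K$ are two copies of $K_r$ sharing exactly the vertex $u_i$, the subgraph $G[F_j^\ast\cup K]$ contains $B_{r,1}$, contradicting the $B_{r,1}$-freeness of $G$. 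Therefore $G_i-A_i$ is $K_{r-1}$-free.

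The main obstacle is recognizing that a $K_{r-1}$ in $G_i-A_i$ does not by itself create a forbidden configuration, since a single $K_{r+1}$ is perfectly $B_{r,1}$-free; the contradiction must come from a hyperedge outside $K$. The crucial idea is that the defining pair $A_i$ is not merely an apex pair for $G_i$ but the center of a large sunflower supplied by Theorem \ref{thm4}(iv), and the pigeonhole estimate $r+1>r-1$ then guarantees a petal whose hyperedge touches $W\cup\{u_i\}$ only at $u_i$. Verifying that this intersection is \emph{exactly} $\{u_i\}$, rather than also containing $v_i$ or a petal vertex, is the one point requiring care.
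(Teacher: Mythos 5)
Your proof is correct and follows essentially the same route as the paper's: extend the hypothetical $K_{r-1}$ by $u_i$ to a copy of $K_r$, invoke Theorem \ref{thm4}(iv) to get an $(r+1)$-sunflower in $\mathcal{F}$ with center $A_i$, and use the pigeonhole bound $r+1>r-1$ to find a petal disjoint from the clique, yielding a forbidden $B_{r,1}$. Your explicit justification that $A_i$ is a sunflower center (via $A(F)\in\mathcal{I}(F,\mathcal{F}_\ell)$) and your careful check that the intersection is exactly $\{u_i\}$ are details the paper states more tersely, but the argument is the same.
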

\begin{proof}
By symmetry, we only need to show that $G_1- A_1$ is $K_{r-1}$-free. Suppose for contradiction that $\{a_1, a_2, \ldots, a_{r-1}\}\subset V(G_1)\setminus\{u_1, v_1\}$ induces a copy of $K_{r-1}$ in $G_1- A_1$. Since $u_1a_j\in E(G_1)$ for each $j=1,\ldots, r-1$,  $\{u_1, a_1, a_2, \ldots, a_{r-1}\}$ induces a copy of $K_{r}$ in $G$. Note that $A_1=\{u_1,v_1\}$ is the center of an $(r+1)$-sunflower in $\mathcal{F}$. Let $F_1,F_2,\ldots,F_{r+1}$ be such a sunflower with center $A_1$. Then there exists some $F_j$ with $(F_j\setminus A_1)\cap \{a_1, a_2, \ldots, a_{r-1}\}=\emptyset$. It follows that $F_j\cap\{u_1, a_1, a_2, \ldots, a_{r-1}\}=\{u_1\}$. By the definition of $\mathcal{F}$, the subgraph of $G$ induced by $F_j\cup \{u_1, a_1, a_2, \ldots, a_{r-1}\}$ contains $B_{r,1}$. This contradicts the fact that $G$ is $B_{r,1}$-free and the claim follows.
\end{proof}

Let $x_i=|V(\mathcal{H}_i)|$ for $i=1,2,\ldots, h$ and assume that $x_1\geq x_2\geq\cdots\geq x_h$. By Claim \ref{claim7}, $x_1+\cdots +x_h\leq n$.

\begin{claim}\label{claim33}
$x_1\geq n-c''$, for some constant $c''>0$.
\end{claim}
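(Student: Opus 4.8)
The plan is to count copies of $K_r$ on both sides of the standing inequality $\mathcal N(K_r,G)\ge \mathcal N(K_{r-2},T_{r-2}(n-2))$ and to exploit that the Turán count $g(m):=\mathcal N(K_{r-2},T_{r-2}(m))$ grows like $a_r m^{r-2}$ (with $a_r=(r-2)^{-(r-2)}>0$), so that distributing vertices among several parts is wasteful. Concretely I set $t=r-2\ (\ge 3)$ and use only that $g(m)=a_rm^{t}+O(m^{t-1})$ with $a_r>0$.

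First I would bound $\mathcal N(K_r,G)$ from above. Every $F\in\mathcal F_1\cup\cdots\cup\mathcal F_{m-1}$ lies in exactly one $\mathcal H_j$ and satisfies $A_j\subset F$, so $F\setminus A_j$ is an $(r-2)$-clique inside $G_j-A_j$, and the map $F\mapsto F\setminus A_j$ is injective; hence $|\mathcal H_j|\le \mathcal N(K_{r-2},G_j-A_j)$. By Claim~\ref{claim8}, $G_j-A_j$ is $K_{r-1}$-free on $x_j-2$ vertices, so Theorem~\ref{thm7} gives $\mathcal N(K_{r-2},G_j-A_j)\le g(x_j-2)$. Adding the leftover cliques, which number at most $c'\binom{n}{r-3}=O(n^{r-3})$ by Claim~\ref{claim6}, yields $\mathcal N(K_r,G)\le \sum_{j=1}^h g(x_j-2)+O(n^{r-3})$. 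Combining this with the lower bound $\mathcal N(K_r,G)\ge g(n-2)$ and substituting $g(m)=a_rm^{t}+O(m^{t-1})$—using $\sum_j x_j\le n$ and the superadditivity $\sum_j x_j^{t-1}\le n^{t-1}$ to absorb all error terms—I obtain, writing $y_j:=x_j-2\ge t$,
\[
\sum_{j=1}^h y_j^{t}\ \ge\ n^{t}-O(n^{t-1}),\qquad \sum_{j=1}^h y_j\le n.
\]

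The remaining work is a concentration step for power sums. I would first note $y_1\ge n/2$: otherwise every $y_j<n/2$ and $\sum_j y_j^{t}\le (n/2)^{t-1}\sum_j y_j\le n^{t}/2^{t-1}\le n^t/2$, contradicting the displayed lower bound for large $n$. Setting $R=\sum_{j\ge2}y_j\le n-y_1\le n/2$ and using $\sum_{j\ge 2}y_j^{t}\le R^{t}$ together with $y_1\le n-R$, I get $n^{t}-O(n^{t-1})\le (n-R)^{t}+R^{t}$. Since $n^{t}-(n-R)^{t}\ge tR(n-R)^{t-1}\ge tR(n/2)^{t-1}$ while $R^{t}\le R(n/2)^{t-1}$ (as $R\le n/2$), this forces $(t-1)R(n/2)^{t-1}\le O(n^{t-1})$, i.e. $R\le c''_0$ for a constant $c''_0=c''_0(r)$. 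Finally $y_1^{t}\ge \sum_j y_j^{t}-R^{t}\ge n^{t}-O(n^{t-1})$, and the elementary inequality $(1-x)^{1/t}\ge 1-x$ gives $y_1\ge n-O(1)$, so $x_1=y_1+2\ge n-c''$ with $c''$ depending only on $r$.

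The main obstacle is precisely this concentration step: the slack between $g(n-2)$ and $\sum_j g(x_j-2)$ is only of order $n^{r-3}$, whereas pulling a single vertex out of the largest part costs a marginal $\Theta(n^{r-3})$ in the Turán count. The delicate point is therefore to make the superadditivity gap for $m^{t}$ quantitatively sharp—of order $n^{t-1}R$—so that, when compared against the $O(n^{t-1})$ error, it bounds $R$ (and hence the number of vertices outside $V(\mathcal H_1)$) by an absolute constant rather than by a quantity growing with $n$.
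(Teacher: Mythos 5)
Your proof is correct. It shares its skeleton with the paper's argument --- both use $|\mathcal{H}_j|\leq \mathcal{N}(K_{r-2},T_{r-2}(x_j-2))$ (Claim \ref{claim8} plus Theorem \ref{thm7}), the $c'\binom{n}{r-3}$ leftover bound of Claim \ref{claim6}, the disjointness of Claim \ref{claim7}, the standing lower bound \eqref{ineq-2}, and a first bootstrap to $x_1\geq n/2$ --- but the final concentration step is genuinely different. The paper stays with exact Tur\'{a}n counts: it applies the splitting inequality \eqref{ineq-1} inductively to merge $\sum_{i\geq2}\mathcal{N}(K_{r-2},T_{r-2}(x_i-2))$ into $\mathcal{N}(K_{r-2},T_{r-2}(n-x_1))$, then applies \eqref{ineq-1} once more to compare against $\mathcal{N}(K_{r-2},T_{r-2}(n-2))$, exposing the cross term $\lfloor(n-x_1+2)/r\rfloor\cdot\mathcal{N}(K_{r-3},T_{r-2}(x_1-2))=\Omega\bigl((n-x_1-r)\,n^{r-3}\bigr)$, which measured against the slack $c'\binom{n}{r-3}$ forces $n-x_1=O(1)$ (this is \eqref{ineq-6} and the lines following it). You instead pass to the polynomial approximation $a_r m^{t}+O(m^{t-1})$ with $t=r-2$ and run a purely analytic power-sum argument: $\sum_{j\geq2}y_j^{t}\leq R^{t}\leq R(n/2)^{t-1}$ and $n^{t}-(n-R)^{t}\geq tR(n/2)^{t-1}$ produce a deficit of at least $(t-1)R(n/2)^{t-1}$, which must be $O(n^{t-1})$, so $R=O(1)$ and then $y_1\geq n-O(1)$. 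Quantitatively the two arguments are identical --- in both, moving one vertex out of the dominant class costs $\Theta(n^{r-3})$ against an $O(n^{r-3})$ slack, exactly the "delicate point" you flag --- but yours trades the combinatorial inequality \eqref{ineq-1} and its inductive application for elementary convexity estimates, at the cost of error-term bookkeeping, while the paper's exact-count formulation feeds directly into the inequalities it reuses afterwards. One simplification available to you: on the upper-bound side $\mathcal{N}(K_{r-2},T_{r-2}(m))\leq a_r m^{t}$ holds exactly, so the only errors to absorb come from $\mathcal{N}(K_{r-2},T_{r-2}(n-2))\geq a_r n^{t}-O(n^{t-1})$ and from $c'\binom{n}{r-3}=O(n^{t-1})$; the superadditivity bound $\sum_j x_j^{t-1}\leq n^{t-1}$ you invoke is not actually needed.
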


\begin{proof}
By Claim \ref{claim8} and Theorem \ref{thm7}, the number of copies of $K_{r-2}$ in $G_i - A_i$ is at most
$\mathcal{N}(K_{r-2}, T_{r-2}(x_i-2))$. It follows that
\[
|\mathcal{H}_i| \leq \mathcal{N}(K_{r-2}, T_{r-2}(x_i-2))
\]
for each $i=1,\ldots,h$. By Claims \ref{claim6} and \ref{claim7},
\begin{align}\label{ineq-3}
\mathcal{N}(K_r, G)&= |\mathcal{F}-(\mathcal{F}_1\cup\cdots \cup\mathcal{F}_{m-1})|+|(\mathcal{F}_1\cup\cdots \cup\mathcal{F}_{m-1})|\nonumber\\[5pt]
&= |\mathcal{F}-(\mathcal{F}_1\cup\cdots \cup \mathcal{F}_{m-1})|+|\mathcal{H}_1|+\cdots+|\mathcal{H}_h|\nonumber\\[5pt]
&\leq c'{n\choose r-3}+ \sum_{i=1}^h \mathcal{N}(K_{r-2}, T_{r-2}(x_i-2)).
\end{align}
Since
\begin{align*}
\mathcal{N}(K_{r-2}, T_{r-2}(x_i-2))\leq \left(\frac{x_i-2}{r-2}\right)^{r-2},
\end{align*}
we have
\begin{align}\label{ineq-4}
\mathcal{N}(K_r, G)\leq & c'{n\choose r-3}+ \sum_{i=1}^h\left(\frac{x_i-2}{r-2}\right)^{r-2}\nonumber\\[5pt]
\leq & c'{n\choose r-3}+\sum_{i=1}^h (x_i-2) \cdot \frac{(x_1-2)^{r-3}}{(r-2)^{r-2}}\nonumber\\[5pt]
\leq & c'{n\choose r-3}+\frac{(x_1-2)^{r-3}(n-2)}{(r-2)^{r-2}}.
\end{align}
By our assumption,
\begin{align}\label{ineq-2}
\mathcal{N}(K_r, G)\geq \mathcal{N}(K_{r-2}, T_{r-2}(n-2)) \geq \left( \frac{n-r}{r-2}\right)^{r-2}.
\end{align}
Combining \eqref{ineq-4} and \eqref{ineq-2}, we obtain that
\[
1\leq c'{n\choose r-3}{\left( \frac{r-2}{n-r}\right)^{r-2}} + \frac{n-2}{n-r}\cdot \left(\frac{x_1-2}{n-r}\right)^{r-3}.
\]
Since $n$ is sufficiently large, we get $x_1\geq \frac{n}{2}+r$.

Let $n_1,n$ be two integers with $0<n_1<n$ and let $H$ be an $r$-partite Tur\'{a}n graph on $n$ vertices with vertex classes $V_1, V_2,\ldots,V_r$. Then there exist partitions $V_j =V_{j,1} \cup V_{j,2}$ for each $j=1,2,\ldots,r$ such that
\[
\sum_{j=1}^r |V_{j,1}|=n_1
\]
and both $H[\cup_{j=1}^r V_{j,1}]$ and $H[\cup_{j=1}^r V_{j,2}]$ are Tur\'{a}n graphs.
By considering the edges between  $\cup_{j=1}^r V_{j,1}$ and $\cup_{j=1}^r V_{j,2}$, it is easy to see that
\begin{align}\label{ineq-1}
\mathcal{N}(K_r, T_r(n))> \mathcal{N}(K_r, T_r(n_1)) +\mathcal{N}(K_r, T_r(n-n_1)) + \left\lfloor \frac{n-n_1}{r}\right\rfloor \cdot \mathcal{N}(K_{r-1}, T_r(n_1)).
\end{align}
Apply the inequality \eqref{ineq-1} inductively, we have
\begin{align}\label{ineq-5}
\sum_{i=2}^h  \mathcal{N}(K_{r-2}, T_{r-2}(x_i-2)) < \mathcal{N}(K_{r-2}, T_{r-2}(n-x_1)).
\end{align}
By \eqref{ineq-3} and \eqref{ineq-5}, we see that
\begin{align*}
\mathcal{N}(K_r, G)\leq & c'{n\choose r-3}+ \mathcal{N}(K_{r-2}, T_{r-2}(x_1-2))+\mathcal{N}(K_{r-2}, T_{r-2}(n-x_1)).
\end{align*}
Apply the inequality \eqref{ineq-1} again, we obtain that
\begin{align}\label{ineq-6}
\mathcal{N}(K_r, G)&\leq c'{n\choose r-3}+ \mathcal{N}(K_{r-2}, T_{r-2}(n-2)) - \left\lfloor \frac{n-x_1+2}{r}\right\rfloor \cdot \mathcal{N}(K_{r-3}, T_{r-2}(x_1-2))\nonumber\\[5pt]
&\leq \mathcal{N}(K_{r-2}, T_{r-2}(n-2))+c'{n\choose r-3} - \frac{n-x_1-r}{r} \cdot (r-2) \left( \frac{x_1-r}{r-2}\right)^{r-3}.
\end{align}
It follows from  \eqref{ineq-2} and \eqref{ineq-6} that
\[
c'{n\choose r-3}\geq  \frac{n-x_1-r}{r} \cdot (r-2) \left( \frac{x_1-r}{r-2}\right)^{r-3}.
\]
Since $x_1>\frac{n}{2}+r$, we arrive at
\[
c'{n\choose r-3}\geq  \frac{n-x_1-r}{r\cdot 2^{r-2}} \cdot (r-2) \left( \frac{n}{r-2}\right)^{r-3}.
\]
It follows that $x_1\geq n-c''$ for some $c''>0$.
\end{proof}

Let us define
\[
\mathcal{K}=\left\{F\in \mathcal{F}:\begin{array}{l}A_1\subset F \text{ and for each } I \text{ with } A_1\subset I\subsetneq F,\\
I\text{ is the center of an }(r+1)\text{-sunflower in } \mathcal{F} \end{array}\right\}.
\]
Obviously, we have $\mathcal{H}_1\subset \mathcal{K}$. Define $$\mathcal{A}=\{F\in\mathcal{F}: A_1\subset F, F\notin \mathcal{K}\}$$ and $$\mathcal{B}=\mathcal{F}-\mathcal{K}-\mathcal{A}.$$
Note that $V(\hk)=\cup_{F\in \hk} F$ and $V(\mathcal{B})=\cup_{F\in \mathcal{B}} F$. We claim that $V(\mathcal{K})\cap V(\mathcal{B})=\emptyset$. Otherwise, there exist $F_1\in \hk$ and $F_2\in \mathcal{B}$ with $|F_1\cap F_2|\geq 1$. Note that $A_1\subset F_1$ and $A_1\not\subset F_2$. If $F_2\cap A_1= \emptyset$, let $C=A_1\cup \{x\}$ with $x\in F_1\cap F_2$. If $F_2\cap A_1\neq \emptyset$, then let $C = A_1$. It is easy to see that $|C\cap F_2|=1$ in both of the two cases. Clearly, we have $A_1\subset C\subsetneq F_1$. By the definition of $\hk$, $C$ is center of an $(r+1)$-sunflower in $\hf$. Let $E_1,E_2,\ldots,E_{r+1}$ be such a sunflower. Since $|F_2\setminus C|< r$, there exists some $E_j$ such that $(E_j\setminus C)\cap (F_2\setminus C) =\emptyset$. Then we have $|E_j\cap F_2| =|C\cap F_2|=1$, a contradiction. Thus
$V(\mathcal{K})\cap V(\mathcal{B})=\emptyset$.

By Claim \ref{claim33}, we have
\begin{align}\label{ineq-7}
|V(\mathcal{B})|\leq n-V(\mathcal{K})\leq n-V(\mathcal{H}_1)\leq c''.
\end{align}
 Let $\mathcal{C} = \{F\in \mathcal{A}\colon F \cap V(\mathcal{B})=\emptyset\}$,
$\mathcal{K}' =\mathcal{K} \cup \mathcal{C}$ and $\mathcal{A}' =\mathcal{A} \setminus \mathcal{C}$. Clearly, $V(\mathcal{K}') \cap V(\mathcal{B})=\emptyset$, $F\cap V(\mathcal{K}') \supset A_1$ and $F\cap V(\mathcal{B})\neq \emptyset$ for each $F\in \mathcal{A}'$.

\begin{claim}\label{claim9}
$\mathcal{B}=\emptyset$.
\end{claim}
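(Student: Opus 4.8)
The plan is to derive a contradiction from the assumption $\mathcal{B}\neq\emptyset$ by showing that even a single copy of $K_r$ avoiding $A_1$ forces $G$ to contain strictly fewer copies of $K_r$ than the construction $K_2\vee T_{r-2}(n-2)$, which contradicts the standing assumption $\mathcal{N}(K_r,G)\geq\mathcal{N}(K_{r-2},T_{r-2}(n-2))$. The whole argument is organized around the common link of the pair $A_1=\{u_1,v_1\}$.

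First I would set $L:=G[N(u_1)\cap N(v_1)]$. Since $u_1v_1\in E(G)$, a copy of $K_r$ contains $A_1$ if and only if it has the form $\{u_1,v_1\}\cup T$ with $T$ a copy of $K_{r-2}$ in $L$; as $\mathcal{K}\cup\mathcal{A}$ is exactly the set of copies of $K_r$ containing $A_1$, this gives the identity
\[
\mathcal{N}(K_r,G)=\mathcal{N}(K_{r-2},L)+|\mathcal{B}|.
\]
The key structural fact I would establish is that $L$ is $K_{r-1}$-free, by the argument of Claim \ref{claim8}: if $S\subseteq N(u_1)\cap N(v_1)$ induced a copy of $K_{r-1}$, then $\{v_1\}\cup S$ would be a copy of $K_r$; since $A_1$ is the center of an $(r+1)$-sunflower $F_1,\dots,F_{r+1}$ in $\mathcal{F}$ and the petals $F_i\setminus A_1$ are pairwise disjoint $(r-2)$-sets, at least two of them miss the $(r-1)$-set $S$, so for such a petal the two copies $\{v_1\}\cup S$ and $F_j$ of $K_r$ would intersect in exactly $\{v_1\}$, yielding a forbidden $B_{r,1}$.

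Next I would extract the ``cost'' of a clique $F_0\in\mathcal{B}$. The same sunflower argument shows $F_0\cap A_1=\emptyset$: if $F_0$ contained exactly one of $u_1,v_1$, then a petal $F_j$ disjoint from $F_0$ (one exists because the $r+1$ petals are disjoint and $|F_0|=r$) would give two copies of $K_r$ meeting in that single vertex. Now, because $L$ is $K_{r-1}$-free, the clique $F_0$ can have at most $r-2$ of its vertices in $V(L)$, so at least two vertices of $F_0$ lie in $(V(G)\setminus A_1)\setminus V(L)$; as $V(L)\subseteq V(G)\setminus A_1$, this forces $|V(L)|\leq n-4$. By Theorem \ref{thm7} and monotonicity of Tur\'an counts, $\mathcal{N}(K_{r-2},L)\leq\mathcal{N}(K_{r-2},T_{r-2}(n-4))$.

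Finally I would close the argument by an order-of-magnitude comparison. Every clique of $\mathcal{B}$ is an $r$-subset of $V(\mathcal{B})$, and $|V(\mathcal{B})|\leq c''$ is bounded by a constant, so $|\mathcal{B}|\leq\binom{c''}{r}=O(1)$; meanwhile $\mathcal{N}(K_{r-2},T_{r-2}(n-2))-\mathcal{N}(K_{r-2},T_{r-2}(n-4))=\Theta(n^{r-3})\to\infty$ since $r\geq5$. Combining the displayed identity with the two bounds gives, for $n$ sufficiently large,
\[
\mathcal{N}(K_r,G)\leq\mathcal{N}(K_{r-2},T_{r-2}(n-4))+\binom{c''}{r}<\mathcal{N}(K_{r-2},T_{r-2}(n-2)),
\]
contradicting $\mathcal{N}(K_r,G)\geq\mathcal{N}(K_{r-2},T_{r-2}(n-2))$; hence $\mathcal{B}=\emptyset$. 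I expect the main obstacle to be the structural step isolating the two ``wasted'' vertices of $F_0$ outside the common neighborhood $V(L)$: once one sees that an $\mathcal{B}$-clique cannot be absorbed into the $K_{r-1}$-free link and therefore shrinks the effective Tur\'an count by a polynomial amount $\Theta(n^{r-3})$, this deficit overwhelms the constant number $|\mathcal{B}|$ of stray cliques and the contradiction is immediate.
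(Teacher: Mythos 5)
Your proof is correct, and it takes a genuinely different route from the paper's. The paper argues by symmetrization against the extremality of $G$: assuming $B\in\mathcal{B}$, it bounds $\deg_{\mathcal{F}}(x)=O(n^{r-4})$ for every $x\in B$, produces a vertex $u\in V(\mathcal{K}')\setminus A_1$ with $\deg_{\mathcal{K}'}(u)\geq c'''n^{r-3}$ (otherwise the standing lower bound already fails), finds $x_0\in B$ nonadjacent to $u$ (else a clique inside $B$ through $u$ and a clique of $\mathcal{K}'$ through $u$ would form a $B_{r,1}$), and then clones $u$ onto $x_0$; the delicate step is verifying that the cloned graph is still $B_{r,1}$-free, which is precisely where the refined decomposition $\mathcal{K}',\mathcal{A}',\mathcal{C}$ is used, and the resulting degree gain contradicts the maximality of $G$. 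You instead contradict only the standing counting assumption $\mathcal{N}(K_r,G)\geq\mathcal{N}(K_{r-2},T_{r-2}(n-2))$ by a static count: the $(r+1)$-sunflower with center $A_1$ shows (exactly as in the proof of Claim \ref{claim8}) both that the link $L=G[N(u_1)\cap N(v_1)]$ is $K_{r-1}$-free and that any $F_0\in\mathcal{B}$ avoids $A_1$ entirely, so $F_0$ places at least two vertices of $V(G)\setminus A_1$ outside $V(L)$, forcing $|V(L)|\leq n-4$; Theorem \ref{thm7} and monotonicity of $m\mapsto\mathcal{N}(K_{r-2},T_{r-2}(m))$ then cap the number of cliques through $A_1$ at $\mathcal{N}(K_{r-2},T_{r-2}(n-4))$, while \eqref{ineq-7} gives $|\mathcal{B}|\leq\binom{c''}{r}=O(1)$, and the $\Theta(n^{r-3})$ deficit in the Tur\'{a}n count (which indeed diverges, as $r\geq 5$) overwhelms this constant. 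Your version buys brevity and robustness: it avoids the symmetrization and its $B_{r,1}$-freeness verification, needs only the bound $|V(\mathcal{B})|\leq c''$ rather than the auxiliary families $\mathcal{K}',\mathcal{A}',\mathcal{C}$, and uses extremality of $G$ only through the weaker lower-bound assumption, so the claim remains valid for any near-extremal $B_{r,1}$-free graph (potentially useful for stability statements). What the paper's route buys is mainly methodological uniformity with the cloning arguments of Section 2; it costs nothing for you to invoke Theorem \ref{thm7} here, since the paper itself applies that theorem immediately after this claim anyway.
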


\begin{proof}
Suppose for contradiction that there exists $B\in \mathcal{B}$. We first show that the degree of each vertex $x$ in $B$ is small.  By \eqref{ineq-7}, we have
\[
\deg_{\mathcal{B}}(x) \leq \binom{|V(\mathcal{B})|}{r-1} \leq \binom{c''}{r-1}.
\]
Note that $A_1\subset F$ for any $F\in \hf\setminus \mathcal{B}$ and $|F\cap F'|\neq 1$ for any $F,F'\in \hf$. We have $A_1\subset B'$ and $|B'\cap B|\geq 2$ for any $B'\in \hf\setminus \mathcal{B}$ with $x\in B'$. Thus, the number of hyperedges containing $x$ in $\hf\setminus \mathcal{B}$ is at most $|B\setminus \{x\}|\cdot\binom{n}{r-4}=(r-1)\binom{n}{r-4}$. Therefore,
\[
\deg_{\hf}(x)\leq \deg_{\mathcal{B}}(x) +(r-1)\binom{n}{r-4} \leq \binom{c''}{r-1}+(r-1)\binom{n}{r-4}.
\]

Let $u\in V(\mathcal{K}')\setminus A_1$ be the vertex with $$\deg_{\mathcal{K}'}(u)= \max \left\{\deg_{\mathcal{K}'}(v)\colon v\in V(\mathcal{K}')\setminus A_1 \right\}.$$
We show that $\deg_{\mathcal{K}'}(u)\geq c'''n^{r-3}$ for some constant $c'''>0$. Since $F\cap V(\mathcal{B})\neq \emptyset$ for each $F\in \mathcal{A}'$, we have
\[
|\mathcal{A}'|+|\mathcal{B}| \leq \sum_{v\in V(\mathcal{B})}\deg_{\hf}(v).
\]
If $\deg_{\mathcal{K}'}(u)=o(n^{r-3})$, then
\begin{align*}
\mathcal{N}(K_r, G)&=|\mathcal{K}'|+|\mathcal{A}'|+|\mathcal{B}|\\[6pt]
&\leq \frac{1}{r-2}\sum_{v\in V(\mathcal{K}')\setminus A_1}\deg_{\mathcal{K}'}(v)+\sum_{v\in V(\mathcal{B})}\deg_{\hf}(v)\\[6pt]
&\leq o(n^{r-2})+c''\left((r-1)\binom{n}{r-4}+\binom{c''}{r-1}\right),
\end{align*}
which contradicts the assumption that $\mathcal{N}(K_r, G)\geq \mathcal{N}(K_{r-2}, T_{r-2}(n-2))$.
Thus $\deg_{\mathcal{K}'}(u)\geq c'''n^{r-3}$ for some constant $c'''>0$.

Since $n$ is sufficiently large, for each $x\in B$ we have
\[
\deg_{\hf}(u) \geq \deg_{\mathcal{K}'}(u) \geq c'''n^{r-3} > \deg_{\hf}(x).
\]
We claim that there exists $x_0\in B$ such that $ux_0$ is not an edge of $G$. Otherwise, if $ux\in E(G)$ for all $x\in B$, then $\{u\}\cup T$ induces a copy of $K_r$ in $G$ for any $T\in \binom{B}{r-1}$. Since $\deg_{\mathcal{K}'}(u) \geq c'''n^{r-3}$, there exists an hyperedge $K$ in $\mathcal{K}'$ containing $u$. Recall that $V(\mathcal{K}')\cap V(\mathcal{B})=\emptyset$. Then $\{u\}\cup T \cup K$ induces a copy of $B_{r,1}$ in $G$, a contradiction. Thus, there exists $x_0\in B$ such that $ux_0$ is not an edge of $G$.

Now let $G'$ be a graph obtained from $G$ by deleting  edges incident to $x_0$ and adding  edges in $\{x_0w\colon w\in N(u)\}$. We claim that $G'$ is $B_{r,1}$-free. Otherwise, there exist two copies $K,K'$ of $K_r$ in $G'$ with $V(K)\cap V(K')=\{y\}$ for some $y\in V(G')$. Since $G$ is $B_{r,1}$-free, we may assume that $x_0\in V(K)$. If  $u\notin V(K')$, then $V(K)\cup V(K')\setminus \{x_0\} \cup \{u\}$ induces a copy of $B_{r,1}$ in $G$, a contradiction. If $u\in V(K')$, then $y\neq x_0$ since $x_0y$ is not an edge in $G'$. Moreover, $V(K')\notin \mathcal{B}$ and $V(K)\setminus \{x_0\}\cup \{u\}\notin \mathcal{B}$ since $u\in V(\mathcal{K}')$. By the definition of $\mathcal{K}'$ and $\mathcal{A}'$, we see that both $V(K')$ and $V(K)\setminus \{x_0\}\cup \{u\}$ contains $A_1$. But now we have $V(K)\cap V(K')\supset A_1$ since $u,x_0\notin A_1$, which contradicts our assumption that $V(K)\cap V(K')=\{y\}$. Thus $G'$ is $B_{r,1}$-free.

Since $\deg_{\hf}(u)  > \deg_{\hf}(x_0)$, we have
\[
\mathcal{N}(K_r, G')=\mathcal{N}(K_r, G)-\deg_{\hf}(x_0)+\deg_{\hf}(u)>\mathcal{N}(K_r, G),
\]
which contracts the maximality of the number of copies of $K_r$ in $G$. Thus, the claim follows.
\end{proof}

By Claim \ref{claim9}, $A_1$ is contained in every hyperedge of $\mathcal{F}$.  Recall that $A_1=\{u_1,v_1\}$. It follows that $xu_1,xv_1\in E(G)$ for any $x\in V(G)\setminus A_1$. We claim that $G\setminus A_1$ is $K_{r-1}$-free. Otherwise, let $\{a_1, a_2, \ldots, a_{r-1}\}\subset V(G)\setminus A_1$ be a set that induces a copy of $K_{r-1}$ in $G- A_1$. Since $u_1a_j\in E(G)$ for each $j=1,\ldots, r-1$,  $\{u_1, a_1, a_2, \ldots, a_{r-1}\}$ induces a copy of $K_{r}$ in $G$. Note that $A_1$ is the center of an $(r+1)$-sunflower in $\mathcal{F}$. Let $F_1,F_2,\ldots,F_{r+1}$ be such a sunflower with center $A_1$. Then there exists some $F_j$ with $(F_j\setminus A_1)\cap \{a_1, a_2, \ldots, a_{r-1}\}=\emptyset$. It follows that $F_j\cap\{u_1, a_1, a_2, \ldots, a_{r-1}\}=\{u_1\}$. By the definition of $\mathcal{F}$, the subgraph of $G$ induced by $F_j\cup \{u_1, a_1, a_2, \ldots, a_{r-1}\}$ contains $B_{r,1}$, a contradiction. Thus $G- A_1$ is $K_{r-1}$-free.

By Theorem \ref{thm7}, there are at most $\mathcal{N}(K_{r-2}, T_{r-2}(n-2))$ copies of $K_{r-2}$ in $G- A_1$ and Tur\'{a}n graph $T_{r-2}(n-2)$ is the unique graph attaining the maximum number. Thus, the number of $K_r$ in $G$ is at most $\mathcal{N}(K_{r-2}, T_{r-2}(n-2))$ and $K_2\vee T_{r-2}(n-2)$ is the unique graph attaining the maximum number of copies of $K_r$.
\end{proof}

Now we prove Theorem \ref{thm8} using F\"{u}redi's structure theorem.

\begin{proof}[Proof of Theorem \ref{thm8}]
Let $G$ be a $B_{r,0}$-free graph on vertex set $[n]$ and let
\[
\hf = \left\{F\in \binom{[n]}{r}\colon G[F] \mbox{ is a clique}\right\}.
\]
Since $G$ is $B_{r,0}$-free, $\mathcal{F}$ is an intersecting family. We apply Theorem \ref{thm4} with $p=r+1$ to $\mathcal{F}$ and obtain $\mathcal{F}^*$. Let $\mathcal{I}=\mathcal{I}(F, \mathcal{F}^*)$ for some fixed $F\in\mathcal{F}^*$. From Theorem \ref{thm4} (iv) and Lemma \ref{lem-2}, we have $|I\cap I'|\geq 1$ for any $I, I'\in \mathcal{I}$. Let $I_0$ be a  minimal set in $\mathcal{I}$.  Since $\mathcal{I}$ is intersection closed, $I_0\subset I$ for all $I\in\mathcal{I}$. Otherwise we have $I_0\cap I\in \mathcal{I}$ and $|I\cap I_0|<|I_0|$, which contracts the minimality of $I_0$. Now we distinguish two cases.

{ Case 1.} $|I_0|=1$. Let $I_0=\{v\}$. By Theorem \ref{thm4} (iv), $\{v\}$ is  center of an $(r+1)$-sunflower in $\mathcal{F}^*$. Let $F_1,F_2,\ldots, F_{r+1}$ be hyperedges in such an $(r+1)$-sunflower. If there is a hyperedge $F$ in $\hf$ with $v\notin F$, then it is easy to find some $j$ such that $F_j\cap F=\emptyset$, which contradicts the fact that $\mathcal{F}$ is an intersecting family. Thus, $v$ is contained in every hyperedge of $\mathcal{F}$. Let $G'= G[N(v)]$. Since each copy of $K_r$ in $G$ contains $v$, $G'$ is $K_{r}$-free. By Theorem \ref{thm7}, we have
\begin{align*}
\mathcal{N}(K_r, G)\leq \mathcal{N}(K_{r-1}, G')\leq \mathcal{N}(K_{r-1}, T_{r-1}(n-1)),
\end{align*}
and the equality holds if and only if  $G\cong K_1\vee T_{r-1}(n-1)$.

{ Case 2.} $|I_0|\geq 2$. We claim that $F\setminus I_0$ is not covered by $\mathcal{I}$. Otherwise, assume that $F\setminus I_0\subset I^*$ for some $I^*\in \mathcal{I}$. Since $I_0\subset I$ for all $I\in \mathcal{I}$,  we have $I_0\subset I^*$. It follows that $I^*=F$, which contradicts the fact that $F\notin \mathcal{I}$. Hence $F\setminus I_0$ is not covered by $\mathcal{I}$. It follows that $F$ is the only hyperedge in $\hf^*$ containing $F\setminus I_0$. Theorem \ref{thm4} (ii) shows that $\mathcal{I}(F, \mathcal{F}^*)$ is isomorphic to $\mathcal{I}(F', \mathcal{F}^*)$ for any $F,F'\in\mathcal{F}^*$. For any $E\in\mathcal{F}^*$, there is an $(r-|I_0|)$-element subset $T$ of $E$ such that $E$ is the only hyperedge in $\hf^*$ containing $T$. Since $|I_0|\geq 2$, we have  $|\mathcal{F}^*|\leq {n\choose r-2}$. By Theorem \ref{thm4} (i), for sufficiently large $n$, we have
\begin{align*}
\mathcal{N}(K_r, G)=|\mathcal{F}|\leq c^{-1}|\mathcal{F}^*|\leq c^{-1}{n\choose r-2}< \mathcal{N}(K_{r-1}, T_{r-1}(n-1)).
\end{align*}
This completes the proof.
\end{proof}

\section{Bounds on $ex(n,K_r,B_{r,s})$ for general $r$ and $s$}

Let $B^{(r)}_s$ be  an $r$-uniform hypergraph consisting of two hyperedge that share exactly $s$ vertices.
Let $ex_r(n,B^{(r)}_s)$  denote the maximum number of hyperedges in an $r$-uniform $B^{(r)}_s$-free hypergraph on $n$ vertices. In \cite{ff85}, Frankl and F\"{u}redi proved that
\begin{thm}[Frankl and F\"{u}redi \cite{ff85}]\label{ffthm}
For $r\geq 2s+2$ and $n$ sufficiently large,
\[
ex_r(n,B^{(r)}_s) =\binom{n-s-1}{r-s-1}.
\]
For $r\leq 2s+1$,
\[
ex_r(n,B^{(r)}_s) =O(n^s).
\]
\end{thm}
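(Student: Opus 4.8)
The plan is to prove the two regimes by feeding the clique/hyperedge family $\mathcal{F}\subset\binom{[n]}{r}$ into F\"uredi's structure theorem (Theorem \ref{thm4}) and then running, for $r\geq 2s+3$, the exact analogue of the proof of Theorem \ref{thm3}, with the role of a $2$-element sunflower centre now played by an $(s+1)$-element one. The lower bound for $r\geq 2s+2$ is immediate: fix an $(s+1)$-set $A$ and take all $r$-sets containing $A$. Any two of them meet in a set containing $A$, hence in at least $s+1>s$ vertices, so the family is $B^{(r)}_s$-free and has exactly $\binom{n-s-1}{r-s-1}$ members. Everything below concerns the matching upper bounds.

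For $r\geq 2s+3$ I would first apply Theorem \ref{thm4} with $p=r+1$ repeatedly, producing pieces $\mathcal{F}_1,\mathcal{F}_2,\ldots$, stopping at the first piece $\mathcal{F}_m$ containing a hyperedge $F_0$ with an $(r-s-2)$-element subset not covered by $\mathcal{I}(F_0,\mathcal{F}_m)$. By property (ii) every $F\in\mathcal{F}_m$ then has such an uncovered $(r-s-2)$-set, so each $(r-s-2)$-set lies in at most one edge of $\mathcal{F}_m$, which bounds the discarded tail $\mathcal{F}-(\mathcal{F}_1\cup\cdots\cup\mathcal{F}_{m-1})$ by $O(n^{r-s-2})$, exactly as in Claim \ref{claim6}. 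On each earlier piece $\mathcal{F}_i$ the restriction $\mathcal{I}(F,\mathcal{F}_i)$ is intersection closed (property (iii)) and, since $\mathcal{F}$ is $B^{(r)}_s$-free, contains no set of size $s$; because $r\geq 2s+3$, Theorem \ref{thm5} yields an $(s+1)$-set $A(F)$ with $\{I:A(F)\subset I\subsetneq F\}\subset\mathcal{I}(F,\mathcal{F}_i)$. Grouping the pieces by the value of this core gives star-like families $\mathcal{H}_1,\ldots,\mathcal{H}_h$ with cores $A_1,\ldots,A_h$.

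The remaining steps mirror Claims \ref{claim7}--\ref{claim9}. I would show the vertex sets $V(\mathcal{H}_j)$ are pairwise disjoint: if two overlapped, choose intermediate sets $A_j\subset C_j\subsetneq F_j$ (using a shared vertex, or the overlap of the two cores) with $|C_1\cap C_2|=s$; each $C_j$ lies in the relevant restriction, so by property (iv) and Lemma \ref{lem-2} it is the centre of an $(r+1)$-sunflower, forcing two edges of $\mathcal{F}$ to meet in exactly $s$ vertices, a contradiction. Within one star the $B^{(r)}_s$-condition is vacuous, so $|\mathcal{H}_i|\leq\binom{|V(\mathcal{H}_i)|-s-1}{r-s-1}$, and superadditivity of $\binom{\cdot}{r-s-1}$ over the disjoint sets gives $\sum_i|\mathcal{H}_i|\leq\binom{n-s-1}{r-s-1}$ up to the $O(n^{r-s-2})$ tail. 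To upgrade this to the exact constant I would run the endgame of Theorem \ref{thm3}: the assumption $|\mathcal{F}|\geq\binom{n-s-1}{r-s-1}$ forces one core $A_1$ to span all but $O(1)$ vertices (analogue of Claim \ref{claim33}); splitting $\mathcal{F}$ into the star $\mathcal{K}$ through $A_1$, a transitional part $\mathcal{A}$, and a bounded remainder $\mathcal{B}$, a degree comparison and shifting step — moving a low-degree vertex of a putative edge of $\mathcal{B}$ onto a high-degree star vertex while preserving $B^{(r)}_s$-freeness and strictly increasing $|\mathcal{F}|$ — shows $\mathcal{B}=\emptyset$. Then $A_1$ lies in every hyperedge, so $\mathcal{F}\subset\{F:A_1\subset F\}$ and $|\mathcal{F}|\leq\binom{n-s-1}{r-s-1}$.

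For $r\leq 2s+1$ only the bound $O(n^s)$ is asserted, so I would argue from a single F\"uredi extraction, using $|\mathcal{F}|\leq c^{-1}|\mathcal{F}^*|$. In the extracted piece let $I_0=\bigcap\mathcal{I}(F,\mathcal{F}^*)$, the unique minimum of the intersection-closed restriction; being a realized intersection it satisfies $|I_0|\neq s$, and $I_0$ is contained in every edge of the piece. If $|I_0|\geq s+1$ the piece is a sub-star and $|\mathcal{F}^*|\leq\binom{n-s-1}{r-s-1}=O(n^{r-s-1})=O(n^s)$, since $r-s-1\leq s$ here. If $|I_0|=d<s$, deleting $I_0$ from every edge produces an $(r-d)$-uniform $B^{(r-d)}_{s-d}$-free family, on which I would induct on the uniformity, noting also that the link of any $s$-set is an intersecting $(r-s)$-family and the link of any vertex is $B^{(r-1)}_{s-1}$-free. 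I expect the genuinely delicate points to be exactly where the excerpt switches methods: (a) the boundary $r=2s+2$, where Theorem \ref{thm5} is unavailable, so the $(s+1)$-core and the exact constant must be obtained by a symmetrization-type argument rather than the decomposition above (for $s=1$ this is precisely the case $r=4$ handled separately in Theorem \ref{thm1}); and (b) the empty- and small-core cases at the boundary $r=2s+1$, the Ruzsa--Szemer\'edi-type regime, where naive link counting is too weak and the full strength of the structure theorem is needed to secure $O(n^s)$.
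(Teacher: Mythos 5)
First, a point of orientation: the paper never proves this statement. It is quoted from Frankl and F\"{u}redi \cite{ff85} and used as a black box in the proof of Theorem \ref{thm9}; the only internal comparison available is with the proof of Theorem \ref{thm3}, which is the paper's adaptation of the \cite{ff85} argument to the case $s=1$, and that is exactly the blueprint you reverse-engineer. Your lower bound (all $r$-sets containing a fixed $(s+1)$-set) is correct, and your overall decomposition --- iterated extraction via Theorem \ref{thm4} with $p=r+1$, the tail bound via uncovered $(r-s-2)$-sets as in Claim \ref{claim6}, Theorem \ref{thm5} producing $(s+1)$-element cores $A(F)$, the stars $\mathcal{H}_1,\ldots,\mathcal{H}_h$, and the endgame modelled on Claims \ref{claim33}--\ref{claim9} --- is faithful to that blueprint in outline.

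There is, however, a genuine gap at your analogue of Claim \ref{claim7}, precisely where you say that overlapping stars yield sets $A_j\subset C_j\subsetneq F_j$ with $|C_1\cap C_2|=s$ ``using a shared vertex, or the overlap of the two cores''. Since $C_j\subset F_j$ forces $C_1\cap C_2\subseteq F_1\cap F_2$, manufacturing an intersection of size exactly $s$ requires $|F_1\cap F_2|\geq s$: writing $D=F_1\cap F_2$, one needs an $s$-set $S$ with $A_1\cap A_2\subseteq S\subseteq D$ and then $C_j=A_j\cup S$ works (properness holds since $|A_j\cup S|\leq 2s+1\leq r-2$ when $r\geq 2s+3$). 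For $s=1$ this is automatic from $|F_1\cap F_2|\geq 1$, which is why the paper's Claim \ref{claim7} is two lines. But for $s\geq 2$ the hypothesis forbids only intersections of size exactly $s$, so two edges from different stars may share $d$ vertices with $1\leq d<s$; then no admissible pair $C_1,C_2$ with $|C_1\cap C_2|=s$ exists inside $D$, the Deza--Erd\H{o}s--Frankl contradiction (Lemma \ref{lem-2}) is unavailable, and pairwise disjointness of the $V(\mathcal{H}_j)$ does not follow from this argument. You must either prove a weaker separation statement (small overlaps, with the superadditive counting repaired accordingly) or follow the finer kernel analysis of \cite{ff85}. Two smaller caveats, both of which you flag honestly but do not resolve: Theorem \ref{thm5} needs $r\geq 2s+3$, so your argument as written does not reach the boundary case $r=2s+2$ that the statement includes (the paper's Theorem \ref{thm1} shows how much bespoke symmetrization work even $r=4$, $s=1$ requires, so ``a symmetrization-type argument'' is a hope, not a proof); and in the $\mathcal{B}=\emptyset$ shifting step, replacing $x_0$ by $u$ in a hypergraph can create new pairwise intersections, so $B^{(r)}_s$-freeness of the shifted family needs the same explicit case analysis as in the proof of Claim \ref{claim9} rather than an appeal to analogy.
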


Now we prove Theorem \ref{thm9} by using Theorem \ref{ffthm}.

\begin{proof}[Proof of Theorem \ref{thm9}]
Notice that $ex(n,K_r,B_{r,s})\leq ex_r(n,B^{(r)}_s)$, by Theorem \ref{ffthm} we have
\begin{align}\label{ffupbound}
ex(n,K_r,B_{r,s})= O(n^{\max\{s,r-s-1\}}).
\end{align}

For $r\geq 2s+1$, it is easy to see that $K_{s+1}\vee T_{r-s-1}(n-s-1)$ is a $B_{r,s}$-free graph. Then
\[
ex(n,K_r,B_{r,s}) \geq \hn(K_{r-s-1},T_{r-s-1}(n-s-1)).
\]
By \eqref{ffupbound}, we have $ex(n,K_r,B_{r,s})=\Theta(n^{r-s-1})$.

For $r\leq 2s$, by using $s$-sum-free partition of $r$, we give a lower bound construction as follows. Let $P=(a_1,a_2,\ldots,a_t)$ be an $s$-sum-free partition of $r$.  Define a graph $G_P$ on the vertex set $V(G)=X_1\cup X_2\cup \ldots X_t$ with $X_i=\lfloor n/t\rfloor$ or $\lceil n/t\rceil$ for each $i=1,2,\ldots,t$. Let $G_P[X_i]$ be the union of $|X_i|/a_i$ vertex-disjoint copies of $K_{a_i}$ for each $i=1,2,\ldots,t$ and $G_P[X_i,X_j]$ be a complete bipartite graph for $1\leq i<j\leq t$.

We claim that $G_P$ is $B_{r,s}$-free. Let $K, K'$ be two copies of $K_r$ in $G_P$. Since $G_P[X_i]$ is a union of vertex-disjoint copies of $K_{a_i}$, we have $|V(K)\cap X_i|\leq  a_i$ and $|V(K')\cap X_i|\leq  a_i$. It follows that  $|V(K)\cap X_i|= a_i$ and $|V(K')\cap X_i|= a_i$ because of $a_1+\cdots+a_t=r$. Since  $P$ is $s$-sum-free, we conclude that $|V(K)\cap V(K')|\neq s$. Thus, $G_P$ is $B_{r,s}$-free. Moreover,
 \[
 \hn(K_r,G_P) = \prod_{i=1}^t \left\lfloor\frac{n}{ta_i}\right\rfloor \approx  \left(t^t\prod_{i=1}^t a_i\right)^{-1} n^t.
 \]
Note that $\beta_{r,s}$ is defined to be the maximum length $t$ in an $s$-sum-free partition of $r$. Thus, the construction gives that
\[
ex(n,K_r,B_{r,s}) =\Omega(n^{\beta_{r,s}})
\]
for $r\leq 2s$. This completes the proof.
\end{proof}

\section{Bounds on $ex(n,K_4, B_{4,2})$}

In this section, we derive an upper bound on $ex(n,K_4, B_{4,2})$ by utilizing the graph removal lemma.

Let $G=(V, E)$ be a graph. For any $E'\subset E(G)$, let $G[E']$ denote the subgraph of $G$ induced by the edge set $E'$, and let $G- E'$ denote the subgraph of $G$ induced by $E(G)\setminus E'$. We use $v(G)$ to denote the number of vertices in a graph $G$.
\begin{lem}[Graph removal lemma \cite{fox}]
For any graph $H$ and any $\epsilon>0$, there exists $\delta>0$ such that any graph on $n$ vertices which contains at most $\delta n^{v(H)}$ copies of $H$ may be made $H$-free by removing at most $\epsilon n^2$ edges.
\end{lem}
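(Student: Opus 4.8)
The plan is to deduce the removal lemma from the Szemer\'{e}di Regularity Lemma through the classical three-stage scheme of \emph{regularize, clean, and count}, arguing by contraposition. Given $H$ with $h=v(H)$ vertices and $\epsilon>0$, I would fix auxiliary parameters $d>0$ (a density threshold) and $\gamma>0$ (a regularity parameter), both chosen small relative to $\epsilon$, $h$ and each other, to be pinned down at the end.

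First I would apply the Regularity Lemma with parameter $\gamma$ to obtain an equitable partition $V(G)=V_1\cup\cdots\cup V_k$, where $k$ is bounded by a constant $k=k(\gamma)$, such that all but at most $\gamma k^2$ of the pairs $(V_i,V_j)$ are $\gamma$-regular. In the cleaning stage I would pass to the subgraph $G'$ obtained by deleting every edge that (i) lies inside a single part, (ii) joins an irregular pair, or (iii) joins a pair of density below $d$. The number of deleted edges is at most roughly $\left(\tfrac{1}{k}+\gamma+d\right)n^2$, which is below $\epsilon n^2$ once $k$ is large and $\gamma,d$ are small; this is a routine double-counting estimate.

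The counting stage is the heart of the argument. I would show that if $G'$ still contains a copy of $H$, then $G$ contains many copies of $H$. Suppose a copy of $H$ in $G'$ embeds its vertices into parts $V_{i_1},\dots,V_{i_h}$; every edge of $H$ then lands in a $\gamma$-regular pair of density at least $d$, since all other edges were deleted in the cleaning stage. By the Counting (or Embedding) Lemma for regular pairs, the number of copies of $H$ in $G$ respecting this assignment of vertices to parts is at least $c\,(n/k)^{h}$ for some constant $c=c(d,H)>0$, provided $\gamma$ was chosen small enough in terms of $d$ and $h$. Finally, setting $\delta=\tfrac{1}{2}c\,k^{-h}$ forces the contrapositive: a graph with at most $\delta n^{h}$ copies of $H$ cannot have $G'$ containing any copy of $H$, so $G'$ is $H$-free and arises from $G$ by deleting at most $\epsilon n^2$ edges, as required.

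The main obstacle is the Counting Lemma together with the bookkeeping of parameters around it. One delicate point is that the embedded copy of $H$ may send several of its vertices to the same part $V_i$; I would handle this either by taking $k$ large enough that one can always relocate to distinct parts, or by invoking the version of the Counting Lemma that permits repeated parts, while tracking the constant $c$ carefully so that the final choice $\delta=\tfrac{1}{2}c\,k^{-h}$ depends only on $H$ and $\epsilon$.
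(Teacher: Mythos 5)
The paper does not prove this lemma at all: it is quoted as a black box, with the citation \cite{fox} pointing to Fox's paper whose entire purpose is a \emph{new} proof of the removal lemma that avoids Szemer\'{e}di's regularity lemma. Your proposal is therefore correct but takes a genuinely different (and older) route: the classical regularize--clean--count argument going back to Ruzsa--Szemer\'{e}di for triangles and to Erd\H{o}s--Frankl--R\"{o}dl and Alon et al.\ for general $H$. As you sketch it the argument is sound, with two bookkeeping points to nail down. First, $\delta$ must be fixed before $G$ is seen, whereas the number of parts $k$ produced by the regularity lemma depends on $G$; the fix is to set $\delta=\tfrac{1}{2}c\,K^{-h}$ where $K=K(\gamma)$ is the \emph{uniform upper bound} on the number of parts, and to invoke the regularity lemma with a prescribed minimum number of parts (say at least $4/\epsilon$) so that the within-part edges number at most $(\epsilon/4)n^2$ --- ``once $k$ is large'' is not something you may choose after the fact. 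Second, the repeated-parts issue you flag is real but standard: either slice each $V_i$ into $h$ equal pieces (regularity is inherited under slicing with a modestly worse parameter) or use the version of the counting lemma that permits repeated classes; note also that the case of small $n$ is automatic, since $\delta n^{h}<1$ forces $G$ to be $H$-free outright. As for what each approach buys: your regularity route is conceptually transparent but yields $1/\delta$ of tower type with height polynomial in $1/\epsilon$; Fox's argument, the one actually cited, replaces the regularity lemma by an iterated weighted-partition scheme and improves the tower height to $O(\log(1/\epsilon))$. Since the present paper uses the removal lemma only qualitatively, in Section 5 to bound $ex(n,K_4,B_{4,2})$, either proof would serve its purposes equally well.
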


\begin{proof}[Proof of Theorem \ref{thm2}]

The lower bound in the theorem is due to the following construction. Suppose that $n=6m+t$ with $t\leq 5$, let $G^*$ be a graph on $n$ vertices consisting of a set $V$ of size $3m$, whose induced subgraph is a union of $m$ disjoint copies of triangles, and an independent set $U$ of size $3m+t$ as well as all the edges between $V$ and $U$. Then, it is easy to see that $G^*$ is $B_{4,2}$-free and
\begin{align*}
\mathcal{N}(K_4, G^*)= m(3m+t)\geq\frac{n^2}{12}-2.
\end{align*}
Thus, we are left with the proof of the upper bound.

Let $G$ be a $B_{4,2}$-free graph on $n$ vertices. We may further assume that each edge of $G$ is contained in at least one copy of $K_4$.

\begin{claim}\label{claim10}
There is a subset $E'\subset E(G)$ with $|E'|=o(n^2)$ such that $G'=G- E'$ is $K_5$-free, and $\mathcal{N}(K_4, G)= \mathcal{N}(K_4, G')+o(n^2)$.
\end{claim}
\begin{proof}
For any edge $e$ in $G$, there is at most one copy of $K_5$ containing $e$, since otherwise we shall find a copy of $B_{4,2}$. Thus, the number of $K_5$ in $G$ is $O(n^2)=o(n^5)$. By the graph removal lemma, we can delete $o(n^2)$ edges to make $G$ $K_5$-free. Let $E'$ be the set of the deleted edges.

Note that the edge deletion is to remove the copy of $K_5$ in $G$, so the deleted edges are contained in some $K_5$ in $G$. Moreover, for any  $e\in E'$, there is exactly one copy of $K_5$ in $G$ containing $e$. We denote it by $K$. Then  each copy of $K_4$ containing $e$ is a subgraph of $K$, otherwise we shall find a copy of $B_{4,2}$. Thus, there are at most three copies $K_4$ in $G$ containing $e$. Thus, edge deletion reduces at most $o(n^2)$ copies of $K_4$.
\end{proof}

Let $R$ be a subset of $E(G')$ consisting of all the edges contained in at least two copies of $K_4$ in $G'$, and let $B=E(G')\setminus R$.

\begin{claim}\label{claim11}
There is a subset $T\subset B$ with $|T|=o(n^2)$ such that $G'[B\setminus T]$ is $K_4$-free, and $\mathcal{N}(K_4,G')= \mathcal{N}(K_4,G'- T)+o(n^2)$.
\end{claim}

\begin{proof}
By the definition of the set $B$, each edge in $B$ is contained in at most one copy of $K_4$ in $G'$. Thus, the number of copies of $K_4$ in $G'[B]$ is at most $O(n^2)=o(n^4)$. By the graph removal lemma, we can delete $o(n^2)$ edges to make $G'[B]$ $K_4$-free. Moreover, for any deleted edge $e$, since $e\in B$ it follows that $e$ is contained in exactly one copy of $K_4$ in $G'$. Thus, the edge deletion decreases at most $o(n^2)$ copies of $K_4$.
\end{proof}

Let $G^*=G'- T$, $B^*=B\setminus T$.  Then the edge set of $G^*$ consists of $R$ and $B^*$, and $G^*[B^*]$ is $K_4$-free. In Claim \ref{claim11}, the edge deletion is to remove the copy of $K_4$ in $G'[B]$, and each deleted edge is contained in exactly one copy of $K_4$ in $G'[B]$.
Then each edge in $R$  is still contained in at least two copies of $K_4$ in $G^*$ and every edge in $B^*$ is contained in at most one copy of $K_4$ in $G^*$. We say a copy of $K_4$ in $G^*$  {\it right-colored} if three of its edges form a triangle in $G^*[R]$ and the other three edges form a star in $G^*[B^*]$.

\begin{claim}
 All the copies of $K_4$ in $G^*$ are right-colored.
\end{claim}
 \begin{proof}
 Suppose that $S=\{v_1, v_2, v_3, v_4\}$ induces a copy of $K_4$ in $G^*$. Clearly, at least one edge in $G^*[S]$ is contained in $R$. Without loss of generality, assume that $v_1v_2$ be such an edge. Since $v_1v_2$ is contained in at least two copies of $K_4$ in $G^*$, assume that $G^*[\{v_1,v_2,v_s,v_t\}]$ be another copy of $K_4$ containing $v_1v_2$. If $\{v_s,v_t\}\cap \{v_3,v_4\}=\emptyset$, then we find a copy of $B_{4,2}$ in $G^*$, a contradiction. Thus, we have $|\{v_s,v_t\}\cap \{v_3,v_4\}|=1$. Assume that $v_s=v_3$, then both $v_1v_3$ and $v_2v_3$ are  contained in at least two copies of $K_4$. It follows that $v_1v_3$ and $v_2v_3$ are edges in $R$. Thus, there are three edges in $G^*[S]$ belonging to $R$ that form a triangle in $G^*$.

Next we show that $v_1v_4,v_2v_4$ and $v_3v_4$ are all edges in $B^*$. If not, assume that $v_3v_4\in R$. Then, all the copies of $K_4$ containing $v_1v_2$ should also contain $v_3$ or $v_4$, otherwise we shall find a copy of $B_{4,2}$. Without loss of generality, assume that all the copies of $K_4$ containing $v_1v_2$ contain $v_3$ as well. Let $G^*[\{v_1,v_2,v_3,v_4\}]$ and $G^*[\{v_1,v_2,v_3,v_5\}]$ be two such copies of $K_4$. Similarly, all the copies of $K_4$ containing $v_3v_4$ should also contain $v_1$ or $v_2$. Without loss of generality, assume that $G^*[\{v_3,v_4,v_1,v_2\}]$ and $G^*[\{v_3,v_4,v_1,v_6\}]$ be two such copies of $K_4$. Clearly, we have $v_5\neq v_6$ for $G^*$ is $K_5$-free. However, at this time both $G^*[\{v_1,v_3,v_4,v_6\}]$ and $G^*[\{v_1,v_3,v_2,v_5\}]$ form a copy of $K_4$, which implies $G^*[\{v_1, v_2, v_3, v_4, v_5, v_6\}]$ contains a copy of $B_{4,2}$, a contradiction. Thus, $v_3v_4\in B^*$.

 Similarly, we can deduce that $v_1v_4$ and $v_2v_4$ are edges in $B^*$. Therefore, $G^*[S]$ is right-colored and the claim holds.
\end{proof}

Since $G^*[B^*]$ is $K_4$-free, by Tur\'{a}n theorem \cite{turan} there are at most $\frac{n^2}{3}$ edges in $G^*[B^*]$. Moreover, since  all the copies of $K_4$ in $G^*$ are right-colored, it follows that each  copy of $K_4$ in $G^*$  contains three edges in $B^*$. Thus, we have
\begin{align*}
\mathcal{N}(K_4, G^*)\leq \frac{|B^*|}{3}\leq \frac{n^2}{9}.
\end{align*}
From Claims \ref{claim10} and \ref{claim11}, it follows that
\[
\mathcal{N}(K_4, G)= \mathcal{N}(K_4, G^*)+o(n^2) \leq \frac{n^2}{9}+o(n^2),
\]
which completes the proof.
\end{proof}

\vspace{10pt}\noindent
{\bf Acknowledgement.} 
The second author is supported by  Shanxi Province Science Foundation for Youths, 
PR China (No. 201801D221028).

\end{document}